\def\thm@space@setup{%
  \thm@preskip=2ex \thm@postskip=2ex
}
\newtheorem{thm}{Theorem~}[section] 
\newtheorem{lem}[thm]{Lemma~}
\newtheorem{prop}[thm]{Proposition~}
\newtheorem{de}[thm]{Definition~}
\newtheorem{rmk}[thm]{Remark~}
\newtheorem{cond}[thm]{Condition~}
\newcommand{\CC}{\mathbb{C}}
\newcommand{\ZZ}{\mathbb{Z}}
\newcommand{\RR}{\mathbb{R}}
\newcommand{\PP}{\mathbb{P}}
\newcommand{\XX}{\mathbb{X}}
\newcommand{\QQ}{\mathbb{Q}}
\newcommand{\Prd}{\mathscr{P}}
\newcommand{\F}{\mathcal{F}}
\newcommand{\X}{\mathscr{X}}
\newcommand{\B}{\mathbb{B}}
\newcommand{\Ha}{\mathcal{H}}
\newcommand{\C}{\mathcal{C}}
\newcommand{\D}{\mathbb{D}}
\newcommand{\M}{\mathcal{M}}
\newcommand{\V}{\mathcal{V}}
\newcommand{\cL}{\mathcal{L}}
\newcommand\Aut{\mathrm{Aut}}
\newcommand\ADE{\mathrm{ADE}}
\newcommand\GIT{\mathrm{GIT}}
\newcommand\Sym{\mathrm{Sym}}
\newcommand\SL{\mathrm{SL}}
\newcommand\Res{\mathrm{Res}}
\newcommand\IV{\mathrm{IV}}
\newcommand\PSL{\mathrm{PSL}}
\newcommand\Spec{\mathrm{Spec}}
\newcommand\Pic{\mathrm{Pic}}
\newcommand\PO{\mathrm{PO}}
\newcommand\rRe{\mathrm{Re}}
\newcommand\rIm{\mathrm{Im}}
\newcommand\codim{\mathrm{codim}}
\newcommand\Bl{\mathrm{Bl}}
\newcommand\GL{\mathrm{GL}}
\newcommand\diag{\mathrm{diag}}
\newcommand{\bs}{\backslash}
\newcommand{\dbs}{\bs\!\! \bs}
\title{Moduli Spaces of Symmetric Cubic Fourfolds and Locally Symmetric Varieties}
 \author{Chenglong Yu, Zhiwei Zheng}
 \newcommand{\Addresses}{{
  \bigskip
  \footnotesize

  C.~Yu, \textsc{University of Pennsylvania}\par\nopagebreak
  \textit{E-mail address}: \texttt{yucl18@upenn.edu}

  \medskip

  Z.~Zheng, \textsc{Tsinghua University,
    Beijing, China}\par\nopagebreak
  \textit{E-mail address}: \texttt{zheng-zw14@mails.tsinghua.edu.cn}
}}
\begin{document}
\bibliographystyle{amsalpha}
\setlength{\lineskip}{0.5ex}
\setlength{\parskip}{0.5ex}
\maketitle

\vspace{0.5cm}
\begin{abstract}
In this paper we realize the moduli spaces of cubic fourfolds with specified automorphism groups as arithmetic quotients of complex hyperbolic balls or type $\IV$ symmetric domains, and study their compactifications. Our results mainly depend on the well-known works about moduli space of cubic fourfolds, including the global Torelli theorem proved by Voisin (\cite{voisin1986theoreme}) and the characterization of the image of the period map, proved independently by Looijenga (\cite{looijenga2009period}) and Laza (\cite{laza2009moduli,laza2010moduli}). The key input for our study of compactifications is the functoriality of Looijenga compactifications, which we formulate in the appendix (section \ref{section: appendix}).
\end{abstract}

\setcounter{tocdepth}{1}
\tableofcontents
\vspace{1cm}

\section{Introduction}
\label{section: introduction}
Cubic fourfold is an intensively studied object in algebraic geometry. The remarkable work by Voisin in 1986 (\cite{voisin1986theoreme}) showed the global Torelli theorem for smooth cubic fourfolds. Based on this, Allcock-Carlson-Toledo (\cite{allcock2011moduli}) and Looijenga-Swierstra (\cite{looijenga2007period}) realized the moduli space of smooth cubic threefolds as an arrangement complement in an arithmetic ball quotient. Recently Laza-Pearlstein-Zhang (\cite{laza2017moduli}) realized the moduli space of pairs consisting of a cubic threefold and a hyperplane as an arrangement complement in a type $\IV$ arithmetic quotient. In both cases, the authors studied compactifications of the moduli spaces. In this paper we characterize the moduli spaces of cubic fourfolds with specified automorphism groups, and identify the $\GIT$-compactifications with Looijenga compactifications. This generalizes the two results mentioned above.

Let $\F$ be the normalization of the irreducible subvariety parameterizing smooth cubic fourfolds with specified action by finite group $A$ (see section \ref{section: general setup: hypersurfaces with automorphism} for the setup). Let $n=\dim \F$. Let $X$ be a cubic fourfold in $\F$. Consider the induced action of $A$ on $H^4(X,\CC)$, and let $\zeta$ be the character corresponding to $H^{3,1}(X)$. Denote $H^4(X)_\zeta$ to be the $\zeta$-eigenspace, which admits a natural Hermitian form $h$ induced by the topological intersection pairing on $H^4(X,\ZZ)$ (see section \ref{subsection: T-markings}). Then $h$ has signature $(n^{\prime},2)$ if $\zeta=\overline{\zeta}$; $(n^{\prime},1)$ otherwise (see proposition \ref{proposition: hermitian form}). The first main theorem of the paper is the following:

\begin{thm}[Main Theorem 1]
\label{theorem: main1}
\begin{enumerate}[(i)]
\item We have equality $n^{\prime}=n$.
\item The Hodge structure on $H^4(X)_\zeta$ gives an algebraic isomorphism $\Prd \colon \F\cong \Gamma\bs(\D-\Ha_s)$. Here $\D$ is a complex hyperbolic ball if $h$ has signature $(n,1)$; a type $\IV$ symmetric domain otherwise. The group $\Gamma$ is an arithmetic group acting proper discontinuously on $\D$ and $\Ha_s$ is a $\Gamma$-invariant hyperplane arrangement in $\D$.
\item The period map $\mathscr{P}$ extends naturally to an algebraic isomorphism $\F_1\cong \Gamma\bs(\D-\Ha_*)$, where $\F_1$ is a natural partial completion of $\F$, adding cubic fourfolds with at worst $\ADE$-singularities, and $\Ha_*$ is a $\Gamma$-invariant hyperplane arrangement contained in $\Ha_s$.
\end{enumerate}
\end{thm}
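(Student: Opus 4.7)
The plan is to construct the eigenperiod map $\Prd$ explicitly and to deduce all three parts from $A$-equivariant versions of Voisin's global Torelli theorem and of the Laza--Looijenga characterization of the image of the period map for cubic fourfolds. For a smooth cubic fourfold $X$ with $A$-action, set $\Prd(X) \in \Gamma \bs \D$ equal to the class of the Hodge filtration on $H^4(X)_\zeta$: in the ball case this is the line $H^{3,1}(X) \subset H^4(X)_\zeta$, and in the type $\IV$ case the full Hodge filtration restricted to the eigenspace. The arithmetic group $\Gamma$ is induced by the $A$-centralizer inside the orthogonal group of the integral primitive lattice, and by Proposition~\ref{proposition: hermitian form} the periods indeed land in $\D$.

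For part (i), the infinitesimal Torelli theorem for cubic fourfolds gives an isomorphism $H^1(X, T_X) \otimes H^{3,1}(X) \xrightarrow{\sim} H^{2,2}_{\mathrm{prim}}(X)$ via contraction with Kodaira--Spencer classes. Since this isomorphism is $A$-equivariant and $H^{3,1}(X)$ is one-dimensional of weight $\zeta$, taking $A$-invariants yields $H^1(X, T_X)^A \cong H^{2,2}_{\mathrm{prim}}(X)_\zeta$. The left-hand side equals $T_{[X]}\F$ for generic $[X]$, while the right-hand side, by the signature analysis of $h$ in Proposition~\ref{proposition: hermitian form}, has complex dimension $n'$; hence $n = n'$.

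For part (ii), injectivity of $\Prd$ modulo $\Gamma$ reduces to Voisin's global Torelli: two smooth $A$-invariant cubic fourfolds with matching eigenperiods are related by an $A$-equivariant Hodge isometry of the full primitive integral cohomology, and any such isometry lifts to an $A$-equivariant isomorphism of the cubic fourfolds. For the image, the Laza--Looijenga theorem identifies the image of the full period map from smooth cubic fourfolds as the complement of a nodal discriminant divisor and a chordal cubic divisor in the ambient type $\IV$ period domain. Intersecting these two divisors with the sub-period-domain $\D$ and taking $\Gamma$-translates produces the hyperplane arrangement $\Ha_s$, so $\Prd$ becomes a bijection onto $\Gamma \bs (\D - \Ha_s)$. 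Algebraicity of the resulting map is standard for eigenperiod maps of algebraic families, via Borel's extension theorem applied to $\Gamma \bs \D$.

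For part (iii), apply Laza's refinement: the period map extends to an open immersion from the full moduli space of cubic fourfolds with at worst ADE singularities, with image equal to the complement of only the chordal cubic divisor. Restricting this extension $A$-equivariantly to $\D$ gives $\F_1 \cong \Gamma \bs (\D - \Ha_*)$, where $\Ha_*$ consists of only the chordal-type components of $\Ha_s$. The hardest step is the precise identification of the arrangements $\Ha_s$ and $\Ha_*$: one must verify, for each $(A, \zeta)$ in the finite list to which the setup applies, that every irreducible component of the Laza--Looijenga divisors restricts to an irreducible hyperplane in $\D$ rather than to a higher codimension locus or to the empty set, and that $\Gamma$ acts with the expected orbits on this collection. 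This amounts to careful equivariant lattice theory on $H^4(X, \ZZ)_{\mathrm{prim}}$ and is where most of the technical work will lie.
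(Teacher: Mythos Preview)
Your approach to part (i) via $A$-equivariant infinitesimal Torelli is correct and is a legitimate alternative to the paper's route. The paper instead constructs a moduli space $\F^m$ of $T$-marked cubic fourfolds and shows that the local period map $\widetilde{\Prd}\colon\F^m\to\D\sqcup\overline\D$ is an open embedding (Proposition~\ref{proposition: properties of local period map}); the equality $n=n'$ then falls out of the equidimensionality. Your argument is more direct for (i) alone, but the paper's open-embedding statement is what drives the rest of the proof.

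For part (ii), your surjectivity argument has a gap. Given a point $x\in\D-\Ha_s$, Laza--Looijenga produces a smooth cubic fourfold $X$ whose period is $x$, and strong Torelli (Lemma~\ref{lemma: isomotphism between automorphism groups}) then gives an action of $A$ on $X$. But you still need that this action is of the \emph{specified symmetry type} $T=[(A,\lambda)]$: the abstract group $A$ could a priori embed in $\SL(V)$ in a non-conjugate way, or act on the defining form via a different character. The paper handles this by a path-lifting argument: connect $x$ to a known base point by a path in $\D-\Ha_s$, lift to a family of smooth cubics, and use Lemma~\ref{lemma: glue automorphisms} to conclude that the $A$-actions along the family glue and hence all have the same type $T$. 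Your sketch does not address this.

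For part (iii) the gap is more serious. ``Restricting Laza's extension $A$-equivariantly'' requires knowing that every point of $\Ha_s-\Ha_*$ inside $\D$ comes from an $A$-symmetric cubic fourfold with ADE singularities of type $T$; but Lemma~\ref{lemma: isomotphism between automorphism groups} is only available for smooth cubics, so the argument from (ii) does not extend. The paper avoids this entirely: it first proves Main Theorem~2 by showing that both $\overline\F$ and $\overline{\Gamma\bs\D}^{\Ha_*}$ are finite over $\overline\M\cong\overline{\widehat\Gamma\bs\widehat\D}^{\Ha_\infty}$ with the same image, and hence coincide by normality (Lemma~\ref{lemma: uniqueness of semi-normalization}). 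Part (iii) is then obtained by restricting this isomorphism to the preimage of $\M_1$. The key technical input is the functoriality of Looijenga compactifications (Theorem~\ref{theorem: normalization looijenga compactification}), which is the content of the appendix.

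Finally, your last paragraph is off target. No case-by-case lattice analysis over a ``finite list'' of pairs $(A,\zeta)$ is needed or performed; the argument is uniform in $T$. The arrangements $\Ha_s$ and $\Ha_*$ are simply defined as the intersections of $\Ha_\Delta\cup\Ha_\infty$ and $\Ha_\infty$ with $\D$, and a hyperplane of $\widehat\D$ meets the linear subdomain $\D$ either in a hyperplane, in all of $\D$ (excluded by Condition~\ref{condition: smooth}), or not at all.
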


Denote $\overline{\F}$ to be the $\GIT$-compactification of $\F$, see section \ref{subsection: Geometric Invariant Theory for Symmetric Hypersurfaces}. We characterize $\overline{\F}$ via:
\begin{thm}[Main Theorem 2]
\label{theorem: main2}
There is an isomorphism between projective varieties $\overline{\F}\cong \overline{\Gamma\bs \D}^{\Ha_*}$.
\end{thm}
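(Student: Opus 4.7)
The plan is to bootstrap the open isomorphism of Main Theorem~\ref{theorem: main1}(iii) to an isomorphism of projective compactifications. Since Main Theorem~1 already identifies $\F_1$ with $\Gamma\bs(\D-\Ha_*)$, the remaining content is to match the boundary $\overline{\F}\setminus \F_1$ of the GIT compactification with the boundary $\overline{\Gamma\bs\D}^{\Ha_*}\setminus \Gamma\bs(\D-\Ha_*)$ of the Looijenga compactification, and to show that this matching is realized by an algebraic morphism.

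The first step is to extend $\Prd$ to a morphism $\overline{\Prd}\colon \overline{\F}\to\overline{\Gamma\bs\D}^{\Ha_*}$. After pulling back to a resolution of $\overline{\F}$, Borel's extension theorem extends the period map across normal-crossing boundaries into the Baily--Borel compactification of $\Gamma\bs\D$, and a one-parameter degeneration analysis shows that the limit mixed Hodge structure on the $\zeta$-eigenspace determines where each GIT-polystable orbit lands. To see that this morphism factors through the Looijenga compactification with the prescribed arrangement, I would invoke the functoriality result from the appendix: the embedding of the $A$-fixed period sub-domain into the ambient period domain of all cubic fourfolds pulls back the Laza--Looijenga identification of the unsymmetric GIT compactification with the Looijenga compactification for the discriminant arrangement to the symmetric setting, so the arrangement $\Ha_*$ appears automatically as the restriction.

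The second step is to check that $\overline{\Prd}$ is an isomorphism. Both source and target are normal projective varieties, and $\overline{\Prd}$ is already birational by Main Theorem~1(iii), so by Zariski's Main Theorem it is enough to show that $\overline{\Prd}$ is a bijection on closed points. For boundary points one classifies the $A$-stable polystable cubic fourfolds with non-ADE singularities (equivalently, the closed orbits in the semistable locus of the $A$-equivariant GIT), and for each such orbit one identifies the target stratum via the limit mixed Hodge structure. The Looijenga strata pair up with the polystable boundary classes, yielding a set-theoretic bijection on the boundary and hence an isomorphism.

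The main obstacle will be the boundary matching: the $A$-equivariant notion of stability on $\F$ can differ from the stability inherited from the ambient Hilbert scheme of cubic fourfolds, so one must verify that every $A$-GIT-polystable non-ADE orbit produces a degeneration whose monodromy logarithm lies in the rational span of $\Ha_*$ (and conversely, that each Looijenga stratum is realized by such an orbit). The functoriality framework of the appendix is designed precisely to reduce this comparison to a lattice-theoretic calculation involving the sublattice $H^4(X,\ZZ)^A$ and its induced sub-arrangement, making the combinatorial matching tractable on a case-by-case basis.
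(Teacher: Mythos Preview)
Your proposal has two structural problems. First, you invoke Main Theorem~1(iii) as input, but in the paper's logical order part~(iii) is \emph{deduced from} Main Theorem~2 (see Proposition~\ref{proposition: extension theorem for cubic fourfolds of type T with ADE singularities}); only parts~(i) and~(ii) are available at this stage. Second, and more seriously, your construction of the extension $\overline{\Prd}\colon\overline{\F}\to\overline{\Gamma\bs\D}^{\Ha_*}$ does not go through as written. Borel extension lands in the Baily--Borel compactification, but the Looijenga compactification is obtained from the semi-toric compactification by a blow-up \emph{followed by a blow-down}, so there is in general no morphism in either direction between $\overline{\Gamma\bs\D}^{\Ha_*}$ and $\overline{\Gamma\bs\D}^{bb}$. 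Thus ``factoring the Borel extension through the Looijenga compactification'' is not a well-posed operation, and the case-by-case matching of polystable orbits with Looijenga strata that you sketch afterwards is never actually carried out.

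The paper's argument sidesteps all of this by never constructing a direct morphism $\overline{\F}\to\overline{\Gamma\bs\D}^{\Ha_*}$. Instead it exhibits both varieties as finite covers of the \emph{same} target: on the GIT side, Proposition~\ref{prop: git normalization} (via Luna's work on adherences) gives a finite morphism $j\colon\overline{\F}\to\overline{\M}$; on the Hodge side, the functoriality of Looijenga compactifications proved in the appendix (Theorem~\ref{theorem: normalization looijenga compactification}) gives a finite morphism $\pi\colon\overline{\Gamma\bs\D}^{\Ha_*}\to\overline{\widehat{\Gamma}\bs\widehat{\D}}^{\Ha_\infty}$. The Laza--Looijenga isomorphism $\overline{\M}\cong\overline{\widehat{\Gamma}\bs\widehat{\D}}^{\Ha_\infty}$ identifies the two targets, and the open isomorphism $\F\cong\Gamma\bs(\D-\Ha_s)$ from part~(ii) shows that $j$ and $\pi$ have the same image and agree on Zariski-open subsets. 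Since both sources are normal, a short normalization-uniqueness lemma (Lemma~\ref{lemma: uniqueness of semi-normalization}) finishes the proof. No boundary classification, no limit mixed Hodge structures, and no case analysis are needed.

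You did identify the correct ingredient---the functoriality of Looijenga compactifications from the appendix---but you used it as a device to help build a direct extension, whereas its actual role is to replace that construction entirely.
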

Here $\overline{\Gamma\bs \D}^{\Ha_*}$ is the Looijenga compactification of $\Gamma\bs(\D-\Ha_*)$, see section \ref{subsection: main theorem in appendix} in appendix.

Notice that in \cite{gonzalez2011automorphisms}, smooth cubic fourfolds with prime-order automorphisms are classified and form 13 irreducible subvarieties in the moduli of cubic fourfolds (see section \ref{subsection: classification prime order}). Two of the examples in the list are exactly the cases dealt in \cite{allcock2011moduli}, \cite{looijenga2007period} and \cite{laza2017moduli}.

\vspace{0.5cm}
\noindent\textbf{Structure of the Paper}:
We briefly introduce the main content of each section.

In section \ref{section: general setup: hypersurfaces with automorphism} we introduce the notion of symmetry type, and set up the geometric invariant theory of hypersurfaces with specified symmetry type.

In section \ref{section: review of theory of cubic fourfolds} we review concepts about cubic fourfolds, and introduce the global Torelli theorem which was proved by Voisin (\cite{voisin1986theoreme}).

In section \ref{section: local period map} we define the moduli of T-marked cubic fourfolds, and the local period map for those cubic fourfolds. We show that the local period map is an open embedding and characterize its image. Finally we discuss the global period maps by passing to certain quotients.

In section \ref{section: compactification} we investigate the compactifications of both sides of the period map for symmetric cubic fourfolds, and identify them.

In section \ref{section: examples and conjectures} we give some examples and relate them to the previous works.

In section \ref{section: appendix}, we review Looijenga compactification of an arrangement complement in a complex hyperbolic ball or type $\IV$ domain. We prove functoriality of Looijenga compactifications.

\vspace{3ex}

\noindent\textbf{Convention}: All algebraic varieties are defined over the field of complex numbers. The adjectives \emph{open}, \emph{closed} refer to analytic topology and \emph{Zariski-open}, \emph{Zariski-closed} are used for Zariski topology.
\vspace{3ex}

\vspace{3ex}
\noindent \textbf{\large{Notation}}:
\begin{enumerate}[]
\item $(d,k)$: dimension and degree of a hypersurface
\item $V$: complex vector space of dimension $k+2$
\item $F$: degree $d$ polynomial in $k+2$ variables
\item $X$: degree $d$ $k$-fold; cubic fourfold when $(d,k)=(3,4)$
\item $A$: a finite subgroup of $\SL(V)$, containing $\mu_{k+2}$ the center of $\SL(V)$
\item $\overline{A}$: image of $A$ in $\PSL(V)$
\item $\lambda$: character of $A$ with specified restriction to $\mu_{k+2}$
\item $T$: equivalence class of $(A,\lambda)$, called symmetry type of degree $d$ $k$-fold
\item $\V$: eigenspace of $\Sym^d(V^*)$ corresponding to $(A,\lambda)$
\item $C$: centralizer of $A$ in $\SL(V)$
\item $N$: a reductive group acting on $\V$
\item $\V^{sm}/\V^{ss}$: subspace of smooth/semi-stable points in $\V$
\item $\F$: $\GIT$ quotient of $\V^{sm}$ by $N$
\item $\F^m$: moduli space of cubic fourfolds with $T$-markings
\item $\F_1$: moduli space of cubic fourfolds of type $T$, which admits at worst $\ADE$ singularities
\item $\overline{\F}$: $\GIT$ quotient of $\V^{ss}$ by $N$, which is a compactification of $\F$
\item $\M$: moduli space of smooth cubic fourfolds
\item $\overline{\M}$: $\GIT$ compactification of $\M$
\item $(\Lambda_0)\Lambda$: (primitive) middle cohomology lattice of cubic fourfold
\item $\varphi$: topological intersection pairing
\item $\eta$: square of hyperplane class
\item $\widehat{\D}$: local period domain for cubic fourfolds
\item $\widehat{\Gamma}$: monodromy group of the universal family of smooth cubic fourfolds
\item $\Ha_{\Delta}/\Ha_{\infty}$: $\widehat{\Gamma}$-invariant hyperplane arrangement in $\D$
\item $\zeta$: character of $A$, induced by the action of $A$ on $H^{3,1}(X)$
\item $\Lambda_{\zeta}$: eigenspace of $(\Lambda_0)_{\CC}$ corresponding to the character $\zeta$
\item $\sigma_X/\sigma$: representation of $A$ on $H^4(X,\ZZ)$/$\Lambda$
\item $h_X/h$: Hermitian form on $H^4(X)_{\zeta}/\Lambda_{\zeta}$
\item $\D$: local period domain for cubic fourfolds of symmetry type $T$
\item $\Gamma$: monodromy group for universal family of smooth cubic fourfolds of symmetry type $T$
\item $\XX$: locally Hermitian symmetric variety (used in Appendix)
\item $\Ha_s/\Ha_*$: $\Gamma$-invariant hyperplane arrangements in $\D$
\item $\overline{\Gamma\bs\D}^{\Ha_*}$: Looijenga compactification of $\Gamma\bs(\D-\Ha_*)$
\item $\widetilde{\Prd}$: local period map
\item $\Prd$: global period map
\end{enumerate}

\section{General Setup: Symmetric Hypersurfaces}
\label{section: general setup: hypersurfaces with automorphism}
\subsection{Space of Symmetric Polynomials}
Let $V$ be a complex vector space of dimension $k+2$. Denote $\Sym^d(V^*)$ to be the space of degree $d$ polynomials on $V$. We have the natural action of $\SL(V)$ on $\Sym^d(V^*)$, namely, $g(F)=F\circ g^{-1}$ for $g\in \SL(V)$ and $F\in \Sym^d(V^*)$.

The center of $\SL(V)$ is the group $\mu_{k+2}$ consisting of $(k+2)$-th roots of unity. Let $A$ be a finite subgroup of $\SL(V)$ containing $\mu_{k+2}$ and denote $\overline{A}=A/\mu_{k+2}$ the image of $A$ in $\PSL(V)$. Then $\Sym^d(V^*)$ is a representation of $A$.

Notice that for any $\xi\in \mu_{k+2}$ and $F\in \Sym^d(V^*)$, we have $\xi(F)=\xi^{-d}F$. Let $\lambda\colon A\longrightarrow \CC^{\times}$ be a character of $A$ such that $\lambda |_{\mu_{k+2}}$ sends $\xi\in\mu_{k+2}$ to $\xi^{-d}$. Let $\V_{\lambda}$ be the $\lambda$-eigenspace of $\Sym^d(V^*)$. We write $\V=\V_{\lambda}$ for short. Geometrically, an element in $\V$ determines a degree $d$ hypersurface (not necessarily smooth) in $\PP V$, whose automorphism group contains $\overline{A}$.

Two pairs $(A_1,\lambda_1)$ and $(A_2,\lambda_2)$ are called equivalent if and only if there exists $g\in \SL(V)$ such that $gA_1g^{-1}=A_2$ and $\lambda_1(a_1)=\lambda_2(ga_1g^{-1})$. We call an equivalence class a symmetry type, denoted by $T$. There is a poset structure on the space of symmetry types, namely, $T_2\le T_1$ if $T_1,T_2$ are represented by $(A_1,\lambda_1),(A_2,\lambda_2)$ respectively, such that $A_1\subset A_2$ and $\lambda_1=\lambda_2|_{A_1}$. Notice that the space $\V$ depends on the representative $(A,\lambda)$ of $T$.

For $F\in \V$, we denote $Z(F)$ to be the hypersurface determined by $F$ in $\PP V$. For $X=Z(F)$, we denote $\Aut(X)$ to be the group of elements in $\PSL(V)$ preserving $X$, and $\Aut(F)$ to be the preimage of $\Aut(X)$ in $\SL(V)$. From \cite{matsumura1963automorphisms} (theorem 1 and theorem 2) we have:
\begin{thm}[Matsumura-Monsky]
\label{theorem: finiteness of automorphism groups of smooth}
\begin{enumerate}[(i)]
When $X$ is smooth, $d\ge 3$, $k\ge 2$,
\item  the group $\Aut(X)$ is finite,
\item  if $(d,k)\ne (4,2)$, the group $\Aut(X)$ contains all biregular automorphisms of $X$.
\end{enumerate}
\end{thm}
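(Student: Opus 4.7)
The plan is to treat (i) and (ii) separately.

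For (i), I would exhibit $\Aut(F)$ as the stabilizer in $\SL(V)$ of the line $\CC\cdot F\subset \Sym^d(V^*)$, hence as a closed algebraic subgroup, and then argue that its identity component is trivial. Its Lie algebra consists of those $g\in\mathfrak{sl}(V)$ with $g\cdot F\in\CC\cdot F$; identifying $\mathfrak{sl}(V)$ with $H^0(\PP V, T_{\PP V})$ via the Euler sequence, these are exactly the global vector fields on $\PP V$ tangent to $X$. The restriction $\mathrm{Lie}(\Aut(F))\to H^0(X, T_X)$ has trivial kernel, because an infinitesimal projective transformation that vanishes on a non-degenerate hypersurface must act trivially on $\PP V$. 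Finiteness therefore reduces to the standard vanishing $H^0(X, T_X)=0$ for smooth hypersurfaces of degree $d\ge 3$ and dimension $k\ge 2$, which I would obtain by chasing the Euler and normal sequences
\begin{equation*}
0\to \mathcal{O}_X\to \mathcal{O}_X(1)^{\oplus(k+2)}\to T_{\PP V}|_X\to 0,\qquad 0\to T_X\to T_{\PP V}|_X\to \mathcal{O}_X(d)\to 0.
\end{equation*}
Since $\Aut(F)$ is a closed subscheme of $\SL(V)$ of dimension zero, it is automatically finite.

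For (ii), let $\phi\colon X\to X$ be a biregular automorphism. The goal is to show $\phi^*\mathcal{O}_X(1)=\mathcal{O}_X(1)$; granted this, $\phi$ acts linearly on $H^0(X, \mathcal{O}_X(1))=V^*$ and thereby extends to an element of $\PSL(V)$ restricting to $\phi$. For $k\ge 3$ the Lefschetz hyperplane theorem gives $H^2(X,\ZZ)=\ZZ\cdot H$ and hence $\Pic(X)=\ZZ\cdot\mathcal{O}_X(1)$, so $\phi^*\mathcal{O}_X(1)=\pm\mathcal{O}_X(1)$, with ampleness selecting the plus sign. For $k=2$ I would instead use that $\phi$ preserves the canonical class $K_X=\mathcal{O}_X(d-4)$; the exclusion $d\ne 4$ makes $K_X$ a nonzero integer multiple of $\mathcal{O}_X(1)$, and since $\Pic(X)$ is torsion-free (as $H^1(X,\mathcal{O}_X)=0$) the equality $(d-4)\phi^*\mathcal{O}_X(1)=(d-4)\mathcal{O}_X(1)$ forces $\phi^*\mathcal{O}_X(1)=\mathcal{O}_X(1)$.

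The main obstacle is precisely the reason $(d,k)=(4,2)$ has to be excluded: for smooth quartic surfaces $K_X$ is trivial, so preserving the canonical class gives no information about the polarization, and $\Pic(X)$ can have rank as high as $20$; correspondingly, a quartic K3 surface can admit abstract automorphisms that do not extend to projective transformations of $\PP^3$. Everywhere else, either Lefschetz pins down $\Pic(X)$ directly, or the canonical class provides the required rigidity for the polarization, and the argument goes through.
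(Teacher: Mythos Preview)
The paper does not give its own proof of this statement; it is quoted as a theorem of Matsumura--Monsky with a direct citation to \cite{matsumura1963automorphisms}. So there is nothing to compare your argument against on the paper's side.

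Your sketch is essentially the standard proof and is correct in outline. One small point deserves care in part (i): the Euler and normal sequences by themselves only give you
\[
H^0(X,T_X)\;\cong\;\{A\in\mathfrak{sl}(V)\mid A\cdot F\in\CC F\},
\]
which is exactly $\mathrm{Lie}(\Aut(F))$, so the chase alone is circular. To actually conclude $H^0(X,T_X)=0$ you need the extra input that for smooth $F$ the partial derivatives $\partial_0 F,\dots,\partial_{k+1}F$ form a regular sequence, hence admit no linear syzygies when $d\ge 3$; combined with the Euler identity this forces any $A$ with $A\cdot F\in\CC F$ to be a scalar. This is standard and easy, but it is the genuine content of the vanishing and is not a consequence of diagram chasing alone.

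Part (ii) is fine as written: Grothendieck--Lefschetz for $k\ge 3$, and for $k=2$ the canonical class argument together with torsion-freeness of $\Pic(X)$ (via $H^1(\mathcal{O}_X)=0$ and simple connectedness) does exactly what you say. Your explanation of why $(d,k)=(4,2)$ must be excluded is also the right one.
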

Apparently, the group $\overline{A}$ embeds into $\Aut(X)$, for any $X=Z(F)$. We propose the following conditions on the symmetry type $T$:

\begin{cond}
\label{condition: smooth}
The linear space $\V$ contains a point $F$ determining smooth hypersurface.
\end{cond}

\begin{cond}
\label{condition: smooth and identification of groups}
The linear space $\V$ contains a point $F$ with the determined hypersurface $X$ smooth and $\overline{A}=\Aut(X)$.
\end{cond}

For $T$ satisfying condition \ref{condition: smooth}, a generic point in $\V$ determines a smooth hypersurface. We have similar result about condition \ref{condition: smooth and identification of groups}:
\begin{prop}
If $T=[(A,\lambda)]$ satisfies condition \ref{condition: smooth and identification of groups}, then a generic element in $\V$ determines a smooth hypersurface $X$ with $\overline{A}=\Aut(X)$.
\end{prop}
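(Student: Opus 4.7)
The plan is to exhibit the bad locus $Z = \{F \in \V^{sm} : \Aut(F) \supsetneq A\}$ as a proper Zariski-closed subset of $\V^{sm}$; its complement is then the desired nonempty Zariski-open subset of $\V$ on which $\Aut(F) = A$, equivalently $\Aut(X_F) = \overline{A}$. The main tool is Chevalley upper semi-continuity of fiber length applied to the universal automorphism group scheme.

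More precisely, I would introduce the closed subscheme
$$\mathcal{A}ut = \{(F, g) \in \V^{sm} \times \SL(V) : gF \in \CC F\} \subseteq \V^{sm} \times \SL(V),$$
whose fiber over $F$ is $\Aut(F)$, a finite group containing $A$ by theorem \ref{theorem: finiteness of automorphism groups of smooth}. The projection $\pi\colon \mathcal{A}ut \to \V^{sm}$ is thus quasi-finite; it is also proper by the standard valuative-criterion argument using the linearity of automorphisms of smooth hypersurfaces (theorem \ref{theorem: finiteness of automorphism groups of smooth}(ii)) together with a rescaling of a lift in $\GL(V_K)$, possibly after a ramified base change of the DVR to make $k+2$ divide $v(\det\hat\sigma)$, to produce a representative with unit determinant over the residue field. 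Hence $\pi$ is finite. Applying Chevalley to the coherent sheaf $\pi_\ast \mathcal{O}_{\mathcal{A}ut}$, the function
$$F \;\longmapsto\; \dim_{\kappa(F)} (\pi_\ast \mathcal{O}_{\mathcal{A}ut})_F \otimes_{\mathcal{O}_{\V^{sm},F}} \kappa(F) \;=\; |\Aut(F)|$$
(the equality uses that in characteristic zero any finite group scheme is reduced, so fiber length equals cardinality) is upper semi-continuous on $\V^{sm}$. Consequently $Z = \{F : |\Aut(F)| \ge |A|+1\}$ is Zariski-closed, and the hypothesis $\Aut(F_0) = A$ places $F_0$ outside $Z$, so $Z$ is proper and its complement is the desired Zariski-open dense locus.

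The main obstacle is the properness of $\pi$; if one prefers to avoid invoking the general representability/properness of automorphism schemes, an alternative route uses the classical uniform bound on $|\Aut(X)|$ for smooth hypersurfaces of fixed $(d,k)$ together with Jordan's theorem on finite subgroups of $\SL(V)$. Combined, these show that $Z$ is contained in a finite union of proper linear subspaces $\V_{B,\mu} = \{F \in \V : bF = \mu(b)F,\ \forall b \in B\}$, indexed up to the action of the centralizer of $A$ by finitely many pairs $(B, \mu)$ with $A \subsetneq B \subseteq \SL(V)$ finite and $\mu|_A = \lambda$; each $\V_{B,\mu}$ is a proper subspace because $F_0 \notin \V_{B,\mu}$, giving the same conclusion.
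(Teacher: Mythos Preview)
Your main approach via the relative automorphism scheme is correct and takes a genuinely different route from the paper. The paper argues locally in the analytic topology: from the hypothesis $\Aut(Z(F_0))=\overline{A}$ it invokes an external semicontinuity lemma (theorem~2.5 in \cite{zheng2017}) asserting that for $F_1$ sufficiently close to $F_0$ there exists $g\in\PSL(V)$ with $g\,\Aut(Z(F_1))\,g^{-1}\subset\overline{A}$; since also $\overline{A}\subset\Aut(Z(F_1))$, a cardinality count forces equality. Your argument instead establishes Zariski upper semicontinuity of $F\mapsto|\Aut(F)|$ directly, by showing $\mathcal{A}ut\to\V^{sm}$ is finite and applying semicontinuity of fiber length. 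This is more self-contained (modulo the standard properness of the automorphism scheme, which your valuative-criterion sketch together with the finite \'etale cover $\SL(V)\to\PGL(V)$ handles correctly) and yields Zariski-openness of the good locus rather than merely analytic openness; the paper's route is shorter only because the semicontinuity is outsourced to the cited reference.

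Your alternative route, however, has a gap. You observe that there are finitely many pairs $(B,\mu)$ with $A\subsetneq B$ of bounded order \emph{up to conjugation} by the centralizer $C$, but a single $C$-orbit can contain infinitely many distinct subgroups $B\supset A$, hence infinitely many distinct subspaces $\V_{B,\mu}\subset\V$. Thus $Z$ is only exhibited as a (possibly infinite) union of proper linear subspaces, and one cannot conclude that $Z$ is a proper closed subset from this alone. To proceed one would have to show that each sweep $\overline{C\cdot\V_{B_0,\mu_0}}$ avoids $F_0$, but that amounts to the very semicontinuity statement your main argument already supplies; so the alternative does not stand independently.
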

\begin{proof}
Suppose $F\in \V$ with $X=Z(F)$ smooth, and $A= \Aut(X)$. Then any small deformation $F_1$ of $F$ in $\V$ determines a smooth hypersurface $Z(F_1)$. By theorem 2.5 in \cite{zheng2017}, when $F_1$ is sufficiently close to $F$, there exists $g\in \PSL(V)$ such that $g\Aut(Z(F_1))g^{-1}\subset \Aut(X)=\overline{A}$. Since $F_1\in \V$, we have $\overline{A}\subset\Aut(Z(F_1))$, hence $\overline{A}= \Aut(Z(F_1))$.
\end{proof}
\subsection{Geometric Invariant Theory for Symmetric Hypersurfaces}
\label{subsection: Geometric Invariant Theory for Symmetric Hypersurfaces}
Now we assume that $d\ge 3$, $k\ge 2$. Given a symmetry type $T=[(A,\lambda)]$ satisfying condition \ref{condition: smooth}, let
\begin{equation*}
C=\{g\in \SL(V)\big{|}gag^{-1}=a,\forall a\in A\}
\end{equation*}
and
\begin{equation*}
N=\{g\in \SL(V)\big{|}gAg^{-1}=A, \lambda(gag^{-1})=\lambda(a),\forall a\in A\}
\end{equation*}
be two reductive subgroups of $\SL(V)$. For reductivity, see \cite{luna1979Chevalley}, lemma 1.1.


\begin{lem}
\label{lemma: action of N and stableness}
There is a natural action of $N$ on $\V$, under which the points in $\V$ defining smooth hypersurfaces are stable.
\end{lem}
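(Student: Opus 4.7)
The plan is to separate the lemma into (i) verifying that $N$ preserves the subspace $\V$, and (ii) deducing $N$-stability of smooth points from the classical $\SL(V)$-stability of smooth hypersurfaces.

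For (i), I would take $g\in N$, $F\in\V$, and any $a\in A$. Since $g$ normalizes $A$, the element $g^{-1}ag$ lies in $A$; since $F\in\V$ is in the $\lambda$-eigenspace, we have $(g^{-1}ag)\cdot F=\lambda(g^{-1}ag)F$; and the second defining condition of $N$ gives $\lambda(g^{-1}ag)=\lambda(a)$. Stringing these together,
\[
a\cdot(g\cdot F)=g\cdot\bigl((g^{-1}ag)\cdot F\bigr)=\lambda(g^{-1}ag)\,(g\cdot F)=\lambda(a)\,(g\cdot F),
\]
so $g\cdot F\in\V_{\lambda}=\V$, and the $N$-action on $\V$ is well-defined.

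For (ii), the key classical input (Mumford's GIT, valid for $d\ge 3$ and $k\ge 2$) is that any smooth degree-$d$ hypersurface in $\PP V$ is properly stable under $\SL(V)$ acting on $\Sym^{d}(V^{*})$. This yields in particular that the $\SL(V)$-stabilizer $\Aut(F)$ is finite (whence the $N$-stabilizer $N\cap\Aut(F)$ is finite a fortiori), and that the Mumford weight $\mu(F,\rho)>0$ for every non-trivial one-parameter subgroup $\rho\colon\CC^{\times}\to\SL(V)$. Now any non-trivial $1$-PS of $N$ is a non-trivial $1$-PS of $\SL(V)$, and by (i) the subspace $\V\subset\Sym^{d}(V^{*})$ is $\rho$-invariant, so the weight decomposition of $F$ under $\rho$ in $\V$ coincides with that in $\Sym^{d}(V^{*})$. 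Hence $\mu(F,\rho)>0$ for every non-trivial $1$-PS of $N$ as well. Since $N$ is reductive, the Hilbert--Mumford numerical criterion applied to the $N$-action on $\V$ concludes that $F$ is $N$-stable.

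The main (mild) subtlety is justifying the comparison of Mumford weights under the inclusion $\V\hookrightarrow\Sym^{d}(V^{*})$, but this is transparent once one notes that $\mu(F,\rho)$ depends only on the $\rho$-weights appearing in the expansion of the vector $F$, which are intrinsic to $F$ and not to the chosen ambient space. A more structural alternative would be to invoke Luna's slice theorem or Luna-type results comparing orbit-closures along the inclusion $N\subset\SL(V)$; however, the numerical criterion gives the cleanest route at this level of generality, and it avoids the normalizer subtlety one would encounter when trying to identify $N\cdot F$ with $\SL(V)\cdot F\cap\V$ in the absence of Condition~\ref{condition: smooth and identification of groups}.
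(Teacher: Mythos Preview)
Your part (i) is exactly the paper's argument, with the same one-line computation.

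For part (ii) your proof is correct but follows a genuinely different route from the paper. The paper verifies the closed-orbit/finite-stabilizer definition of stability directly: finiteness of the $N$-stabilizer comes from finiteness of $\Aut(F)$ (Matsumura--Monsky), and closedness of $NF$ in $\V$ is deduced from the chain $NF\subset \SL(V)F\subset \Sym^d(V^*)$, using that $\SL(V)F$ is closed (smooth hypersurfaces are $\SL(V)$-stable) and that $NF$ is closed in $\SL(V)F$. The last step, stated without proof in the paper, holds because $\Aut(F)$ finite makes the orbit map $\SL(V)\to\SL(V)F$ a finite (hence closed) morphism, and $N\cdot\Aut(F)$ is a finite union of closed cosets. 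Your approach via the Hilbert--Mumford numerical criterion sidesteps this orbit-closure check entirely: once one knows $\mu(F,\rho)>0$ for all nontrivial $1$-PS of $\SL(V)$, the same inequality holds for all $1$-PS of $N$ tautologically, and reductivity of $N$ finishes. The paper's argument is a bit more elementary (no numerical criterion needed), while yours is cleaner in that it avoids the implicit verification that $NF$ is closed in $\SL(V)F$.
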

\begin{proof}
For any $g\in N$ and $F\in \V$, we need to show $g(F)\in \V$. For any $a\in A$, we have:
\begin{align*}
a(g(F))=g(g^{-1}ag(F))=g(\lambda(g^{-1}ag)F)=g\lambda(a)F=\lambda(a)g(F),
\end{align*}
which implies $g(F)\in \V$ by definition of $\V$. Therefore, there is a natural action of $N$ on $\V$.

Now take $F\in \V$ with $X=Z(F)$ smooth. Then $\Aut(X)$ is finite by theorem \ref{theorem: finiteness of automorphism groups of smooth}. Since the stabilizer group of $F$ under action of $N$ is a subgroup of $\Aut(F)$, hence also finite. Moreover, $NF$ is closed in $\SL(V)F$, and the latter is closed in $\Sym^d(V^*)$ since $F$ is smooth. Thus $NF$ is closed in $\Sym^d(V^*)$, hence also closed in $\V$. We conclude that $F$ is stable under the action of $N$.
\end{proof}

Denote $\V^{sm}=\{F\in \V\big{|}Z(F)$ smooth$\}$, $\V^{ss}$ the set of semi-stable elements in $\V$ under the action of $N$, and $\PP\V^{sm}$, $\PP\V^{ss}$ their projectivizations. By lemma \ref{lemma: action of N and stableness}, we can take $\F=N\dbs \PP \V^{sm}$ to be the $\GIT$ quotient, with compactification $\overline{\F}=N\dbs\PP\V^{ss}$. Different representatives of the symmetry type induce canonically isomorphic $\GIT$-quotients. Define $\M=\SL(V)\dbs \PP\Sym^d(V^*)^{sm}$ to be the moduli space of smooth degree $d$ hypersurfaces in $\PP(V)$, with compactification $\overline{\M}=\SL(V)\dbs \PP\Sym^d(V^*)^{ss}$. We have the following proposition:
\begin{prop}
\label{prop: git normalization}
There is a natural morphism $j\colon \overline{\F}\longrightarrow \overline{\M}$ sending $[F]\in \F$ to $[F]\in \M$ for any $F\in \V^{sm}$. This morphism is finite. When $T$ satisfies condition \ref{condition: smooth and identification of groups}, the morphism $j$ is a normalization of its image.
\end{prop}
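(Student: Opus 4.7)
The plan is to construct $j$ via the natural inclusion $\V\hookrightarrow\Sym^d(V^*)$ at the level of GIT quotients, establish finiteness by combining properness with quasi-finiteness, and under Condition~\ref{condition: smooth and identification of groups} show that $j$ is birational onto its image, hence a normalization.

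For the construction, the inclusion $\PP\V\hookrightarrow\PP\Sym^d(V^*)$ is $N$-equivariant since $N\subset\SL(V)$. The main GIT input is that every $N$-semistable point of $\PP\V$ is $\SL(V)$-semistable in $\PP\Sym^d(V^*)$; this can be seen from the Hilbert-Mumford numerical criterion by averaging a destabilizing one-parameter subgroup of $\SL(V)$ over $A$ (using that $\V$ is a character eigenspace for $A$) to produce a destabilizing one-parameter subgroup lying in the centralizer $C\subset N$, or equivalently from Luna's general results on semistability under centralizers (\cite{luna1979Chevalley}). Granting this, the composite $\PP\V^{ss}\hookrightarrow\PP\Sym^d(V^*)^{ss}\twoheadrightarrow\overline{\M}$ is $N$-invariant and factors through $\overline{\F}=N\dbs\PP\V^{ss}$ by the universal property of GIT, producing $j$ with the described behavior on $\F$.

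For finiteness, $\overline{\F}$ is projective so $j$ is proper; since a proper quasi-finite morphism is finite, it suffices to check that $j$ has finite fibers. Fix $[F_0]\in\overline{\M}$ with associated closed orbit $\SL(V)\cdot F_0$ in $\PP\Sym^d(V^*)^{ss}$; every $[F]\in j^{-1}([F_0])$ is represented by a closed $N$-orbit in $\PP\V^{ss}$ whose $\SL(V)$-orbit closure contains $\SL(V)\cdot F_0$, and after moving to a limit inside $\PP\V^{ss}$ we may assume $F=gF_0$ for some $g\in\SL(V)$. Then $gAg^{-1}\subseteq\Aut(F)$, a finite group by Theorem~\ref{theorem: finiteness of automorphism groups of smooth} in the stable case (the strictly semistable case is handled by a parallel orbit-closure analysis inside $\V^{ss}\cap\SL(V)\cdot F_0$), so $gAg^{-1}$ takes only finitely many values. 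For each such value, the constraint $F=gF_0\in\V_\lambda$ pins $g$ down to a single $N$-coset, yielding finitely many $N$-orbits.

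Finally, assume Condition~\ref{condition: smooth and identification of groups}. The preceding proposition supplies a dense open $U\subset\F$ on which $\Aut(F)=A$. For $F,F'\in U$ with $F'=gF$ and $g\in\SL(V)$, one has $gAg^{-1}=\Aut(F')=A$, so $g$ normalizes $A$; the identity $aF'=a(gF)=g(g^{-1}ag)F=\lambda(g^{-1}ag)F'$ combined with $F'\in\V_\lambda$ then forces $\lambda(gag^{-1})=\lambda(a)$ for all $a\in A$, i.e.\ $g\in N$. Thus $j$ is injective over $U$ and hence birational onto its image. Since $\PP\V^{ss}$ is smooth (being open in the projective space $\PP\V$) and $N$ is reductive, the GIT quotient $\overline{\F}$ is normal; a finite birational morphism from a normal variety onto its image is the normalization of that image, which completes the proof. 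The main difficulty lies in the GIT comparison between $N$- and $\SL(V)$-semistability on $\V$; the remaining steps are formal given the Matsumura-Monsky finiteness of automorphism groups of smooth hypersurfaces.
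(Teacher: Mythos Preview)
Your generic-injectivity argument under Condition~\ref{condition: smooth and identification of groups} is essentially identical to the paper's: show that if $gF_1=F_2$ with $F_1,F_2$ generic in $\V$ then $g^{-1}Ag\subset\Aut(F_1)=A$ and $\lambda(g^{-1}ag)=\lambda(a)$, whence $g\in N$. That part is fine.

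The construction of $j$ and its finiteness, however, diverge from the paper in a way that leaves a real gap. The paper does not separate ``construct $j$'' from ``prove finiteness''; it invokes the main theorem of Luna's \emph{Adh\'erences d'orbite et invariants} (the reference \cite{luna1975Adh}, applied after passing to a symmetric power of $\V$ on which $A$ acts trivially, following \cite{ressayre2010geometric}), which yields directly a finite morphism of invariant rings sending semistable points to semistable points. Your route---first compare $N$- and $\SL(V)$-semistability via Kempf/Hilbert--Mumford to build $j$, then argue quasi-finiteness fiberwise---is reasonable in spirit, but the quasi-finiteness step does not go through as written. You take a closed $N$-orbit $N\cdot F$ in $\PP\V^{ss}$ mapping to $[F_0]$ and assert that ``after moving to a limit inside $\PP\V^{ss}$ we may assume $F=gF_0$''. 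That step is precisely the nontrivial content of Luna's theorem: a closed $N$-orbit in $\V^{ss}$ has closed $\SL(V)$-orbit. Without it, there is no reason the closed $N$-orbit lies over the closed $\SL(V)$-orbit, and you cannot reach a representative of the form $gF_0$ inside $\V$. Your parenthetical ``the strictly semistable case is handled by a parallel orbit-closure analysis'' is exactly where this bites: for such $F_0$ the group $\Aut(F_0)$ can be positive-dimensional, so Matsumura--Monsky (Theorem~\ref{theorem: finiteness of automorphism groups of smooth}) is unavailable and the counting of conjugates of $A$ inside $\Aut(F_0)$ breaks down. (Also, the citation \cite{luna1979Chevalley} is the reductivity lemma, not the adherence/semistability comparison; the relevant input is \cite{luna1975Adh}.)

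In short: the paper's single appeal to \cite{luna1975Adh} packages both existence of $j$ and finiteness. Your decomposition is fine for the existence of $j$ and for the normalization claim, but to close the finiteness argument you must either invoke the same Luna result (closed orbit for the subgroup implies closed orbit for the ambient group) or reproduce its proof; the fiberwise count you sketch does not stand on its own at strictly semistable points.
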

\begin{proof}
Here we use a projective version of the main theorem in \cite{luna1975Adh}. See the argument of proposition $8$ in \cite{ressayre2010geometric}.  Since $A$ is a finite group, there exists certain symmetric power $\Sym^l (\V)$ on which the $A$-action is trivial. Consider the $\SL(V)$-action on the coordinate ring $\bigoplus_{m}\Sym^{lm} (\Sym^d(V^*)^*)$ of $(\PP(\Sym^d(V^*)), \mathcal{O}(l))$. Notice that $N$ is of finite index in the normalizer of $A$ in $\SL(V)$. By the main theorem in \cite{luna1975Adh}, we have a finite morphism
\begin{equation*}
\widetilde{j}\colon\Spec((\bigoplus_{m}\Sym^{lm}(\V^*))^N)\longrightarrow\Spec((\bigoplus_{m}\Sym^{lm}(\Sym^d(V^*)^*))^{\SL(V)})
\end{equation*}
sending semi-stable points to semi-stable points, and preserving the cone structures. Thus $\widetilde{j}$ does not contract any line, so descends to a finite morphism $j\colon \overline{\F}\longrightarrow \overline{\M}$. The morphism $j$ sends $[F]\in \F$ to $[F]\in \M$ for any $F\in \V^{sm}$.

We claim that when $T$ satisfies condition \ref{condition: smooth and identification of groups}, the morphism $j$ is generically injective. Take generically $F_1,F_2\in \V$ and assume $[F_1]=[F_2]$ in $\M$. Then there exists $g\in \SL(V)$ with $g(F_1)=F_2$. By the calculation
\begin{align}
\label{equation: g-1hg}
g^{-1}ag(F_1)=g^{-1}a(F_2)=g^{-1}\lambda(a)F_2=\lambda(a)F_1
\end{align}
we have that $g^{-1}ag\in \SL(V)$ is an automorphism of $Z(F_1)$. By the genericity of $F_1$, we have $A\cong \Aut(F_1)$, which implies that $g^{-1}ag\in A$. Then by equation \eqref{equation: g-1hg} and $F_1\in \V$, we have $\lambda(g^{-1}ag)=\lambda(a)$. This implies that $g\in N$, hence $[F_1]=[F_2]$ in $\F$. Thus $j$ is generically injective.

Moreover, since $\overline{\F}$ is normal and projective, $j$ is a normalization of its image.
\end{proof}

Let $T=[(A,\lambda)]$ be a symmetry type satisfying condition \ref{condition: smooth}. Consider the automorphism groups $\Aut(F)$ for all $F\in \V_T^{sm}$. There exists $F^{\prime}\in \V_T^{sm}$ such that $\#\Aut(F)$ is minimal. Fix this polynomial $F^{\prime}$, and denote $A^{\prime}=\Aut(F^{\prime})$. We have a symmetry type $T^{\prime}=[(A^{\prime},\lambda^{\prime})]$, where $a(F^{\prime})=\lambda^{\prime}(a)F$ for all $a\in A^{\prime}$. We have $T\ge T^{\prime}$, and $T^{\prime}$ satisfies condition \ref{condition: smooth and identification of groups}. For $T^{\prime}$, there are the corresponding $N^{\prime}, \V^{\prime}$ and $\overline{\F^{\prime}}$. We have the following proposition:
\begin{prop}
\label{proposition: nonsaturated and saturated}
There exists a natural finite morphism $\overline{\F}\longrightarrow\overline{\F^{\prime}}$.
\end{prop}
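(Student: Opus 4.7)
The plan is to construct the morphism via the universal property of normalization, using the finite morphisms $j \colon \overline{\F} \to \overline{\M}$ and $j^{\prime} \colon \overline{\F^{\prime}} \to \overline{\M}$ produced by proposition \ref{prop: git normalization}. Since $T^{\prime}$ satisfies condition \ref{condition: smooth and identification of groups}, proposition \ref{prop: git normalization} moreover identifies $j^{\prime}$ with the normalization of its image $Z^{\prime} := j^{\prime}(\overline{\F^{\prime}})$ inside $\overline{\M}$.

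The first step is to check that the images $Z := j(\overline{\F})$ and $Z^{\prime}$ coincide in $\overline{\M}$. The inclusion $Z^{\prime} \subseteq Z$ follows at once from $\V^{\prime} \subseteq \V$ (a consequence of $A \subseteq A^{\prime}$ and $\lambda = \lambda^{\prime}|_A$): any $F \in \V^{\prime sm}$ already lies in $\V^{sm}$. For the reverse direction, I would argue at the generic point. By construction of $T^{\prime}$, the open locus $U := \{F \in \V^{sm} : \#\Aut(F) = \#A^{\prime}\}$ is non-empty, hence dense in the irreducible variety $\V^{sm}$. On $U$ the conjugacy class of the pair $(\Aut(F), \chi_F)$ (where $\chi_F$ is the character of $\Aut(F)$ acting on $F$) is locally constant, and by irreducibility it is constant; since at $F = F^{\prime}$ this class is exactly $T^{\prime}$, the same holds throughout $U$. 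For any $F \in U$ I can therefore choose $g \in \SL(V)$ with $gF \in \V^{\prime sm}$, so $[F] = [gF] \in j^{\prime}(\F^{\prime}) \subseteq Z^{\prime}$. Since $j$ is proper and $U$ maps to a dense subset of $\F \subseteq \overline{\F}$, taking closures gives $Z \subseteq Z^{\prime}$, and hence $Z = Z^{\prime}$. The main obstacle in this step is the character-matching: one needs not only that $\Aut(F)$ is $\SL(V)$-conjugate to $A^{\prime}$ for generic $F$, but that $(\Aut(F), \chi_F)$ is equivalent to $(A^{\prime}, \lambda^{\prime})$ as a symmetry type, so that $gF$ genuinely lands in the $\lambda^{\prime}$-eigenspace $\V^{\prime}$ rather than in some other character space of $A^{\prime}$. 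The local-constancy argument anchored at $F^{\prime}$ is the mechanism I expect to carry this through.

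With $Z = Z^{\prime}$ established, the remainder is formal. The variety $\overline{\F}$ is normal, being a GIT quotient of a smooth variety by a reductive group, and $j$ is a dominant finite morphism from this normal variety to $Z^{\prime}$. By the universal property of normalization, $j$ factors uniquely as $j = j^{\prime} \circ \pi$ for a morphism $\pi \colon \overline{\F} \to \overline{\F^{\prime}}$, which is the sought-for natural morphism. Finiteness of $\pi$ is automatic: since $j^{\prime}$ is separated and $j = j^{\prime} \circ \pi$ is proper, $\pi$ is proper; since $j$ is affine and $j^{\prime}$ is separated, $\pi$ is affine; hence $\pi$ is finite.
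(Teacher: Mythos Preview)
Your proof is correct and follows the same route as the paper: establish that $j$ and $j'$ have the same image in $\overline{\M}$, then factor $j$ through the normalization $j'$. The only differences are cosmetic---the paper invokes theorem 2.5 of \cite{zheng2017} for the specialization of automorphism groups that underlies your local-constancy claim (and concludes the image equality via a dimension comparison rather than your generic-point inclusion), so you should cite that result where you assert local constancy of $(\Aut(F),\chi_F)$.
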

\begin{proof}
By proposition \ref{prop: git normalization}, we have two finite morphisms $j\colon \overline{\F}\longrightarrow \overline{\M}$ and $j^{\prime}\colon \overline{\F^{\prime}}\longrightarrow \overline{\M}$, and the latter one is a normalization of its image. We show that $j$ and $j^{\prime}$ have the same image. Firstly, we have that $j^{\prime}(\overline{\F^{\prime}})\subset j(\overline{\F})$ since $\V^{\prime}\subset\V$. By lemma 2.5 in \cite{zheng2017}, when $F^{\prime\prime}\in \V$ is sufficiently close to $F^{\prime}$, there exists $g\in \SL(V)$, such that $g\Aut(F^{\prime\prime})g^{-1}\subset\Aut(F^{\prime})=A^{\prime}$. By minimality of $\# A^{\prime}$, we have $g\Aut(F^{\prime\prime})g^{-1}= A^{\prime}$. This implies that $\Aut(g(F^{\prime\prime}))=A^{\prime}$, hence $g(F^{\prime\prime})\in \V^{\prime}$. We then have that $\dim (j(\overline{\F}))\le \dim (j^{\prime}(\overline{\F^{\prime}})$. By irreducibilities of the two images, they are the same.

By universal property of normalization, the morphism $j$ factors through $j^{\prime}$. Therefore, we have naturally a finite morphism $\overline{\F}\longrightarrow \overline{\F^{\prime}}$.
\end{proof}

\begin{rmk}
The fiber of the finite morphism $\overline{\F}\longrightarrow \overline{\F^{\prime}}$ over $[F^{\prime}]$ is bijective to the orbit of $(A,\lambda)$ in the set of subdatas of $(A^{\prime},\lambda^{\prime})$ under the action of $N^{\prime}$.
\end{rmk}
\subsection{Universal Deformation}
\label{universal defo}
We fix a type $T=[(A,\lambda)]$ satisfying condition \ref{condition: smooth}, and assume $d\ge 3$ and $k\ge 2$. Next we use Luna's \'etale slice theorem to describe the local structure of $\F$, and construct the universal family of smooth degree $d$ $k$-folds of type $T$. We follow the argument in \cite{zheng2017} (section 2). For Luna's \'etale slice theorem and its proof, one can refer to \cite{luna1973slices} or \cite{popov1994invariant}.

Denote $G$ to be the centralizer of $\overline{A}$ in $\PSL(V)$, which acts on the affine variety $\PP\V^{sm}$. For any $x\in \PP\V^{sm}$, we denote $Gx$ to be the orbit of $x$ and $G_x$ to be the stabilizer of $x$. By lemma \ref{lemma: action of N and stableness}, $Gx$ is closed in the affine variety $\PP\V^{sm}$ and $G_x$ is finite. By Luna's \'etale slice theorem, there exists a smooth, locally closed, $G_x$-invariant subvariety $S$ containing $x$, such that:
\begin{enumerate}[(i)]
\item The image of $\kappa\colon G\times^{G_x}S\longrightarrow \PP\V^{sm}$, denoted by $U$, is Zariski-open and $G$-invariant,
\item The morphism $\kappa\colon G\times^{G_x}S\longrightarrow U$ is \'etale,
\item The morphism $G\dbs\kappa\colon G_x\dbs S\longrightarrow G\dbs U$ is \'etale,
\item The above two morphisms induce an isomorphism
\begin{equation}
\label{equation: luna isomorphism}
G\times^{G_x}S\cong U\underset{G\dbs U}{\times}G_x\dbs S.
\end{equation}
\end{enumerate}

We can shrink $S$ in the analytic category such that:
\begin{enumerate}[(i)]
\item[(v)] $S$ is $G_x$-invariant, contractible and contains $x$, with $U=\kappa(G\times^{G_x}S)$ a $G$-invariant open subset of $\PP\V^{sm}$,
\item[(vi)] the morphism between analytic spaces: $G_x\dbs S\longrightarrow G\dbs U$ is an isomorphism.
\end{enumerate}

From \eqref{equation: luna isomorphism}, we have an isomorphism between analytic spaces:
\begin{equation*}
G\times^{G_x}S\cong U,
\end{equation*}
by which we have a principal $G_x$-bundle $G\times S\longrightarrow U$. In particular, $G\times S$ covers $U$.
\begin{de}
For any symmetry type $T$, we define a category $\C_{d,k}^T$ as follows. The objects are families of degree $d$ $k$-folds of type $T$ with a specified central fiber, and the morphisms are holomorphic maps between families, sending central fiber to central fiber and compatible with the action of $\overline{A}$.
\end{de}
\begin{prop}
\label{prop: universal deformation}
The family $\X_S$ of degree $d$ $k$-folds of type $T$ over $S$ has the following universal property. For any subfamily $\X_{S^{\prime}}\longrightarrow S^{\prime}\subset U$ of degree $d$ $k$-folds of type $T$ containing a central fiber $X^{\prime}$ with isomorphism $f\colon X^{\prime}\cong X$ commuting with $\overline{A}$, we have a unique morphism in the category $\C_{d,k}^T$:
\begin{equation*}
\begin{CD}
\X_{S^{\prime}} @>\widetilde{f}>> \X_S\\
@VVV               @VVV\\
S^{\prime}               @>>> S
\end{CD}
\end{equation*}
such that the restriction of $\widetilde{f}$ to $X^{\prime}$ is $f$.

Moreover, for any two fibers $X_1, X_2$ of $\X_S$ with an isomorphism $g\colon X_1\longrightarrow X_2$ commuting with $\overline{A}$, we can extend $g$ uniquely to a morphism $\widetilde{g}\colon \X_S\longrightarrow \X_S$ in $\C_{d,k}^T$
\end{prop}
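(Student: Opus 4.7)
The plan is to exploit the Luna slice isomorphism $G\times^{G_x}S\cong U$ and, equivalently, the principal $G_x$-bundle $\pi\colon G\times S\to U$, $(h,s)\mapsto h\cdot s$. Since $G_x$ is finite, $\pi$ is a finite unramified covering in the analytic category, so any holomorphic map into $U$ lifts uniquely along $\pi$ once a lift at a single point is specified (after shrinking the source to a connected neighborhood if necessary). The fiberwise isomorphisms demanded in the statement will be read off from the $G$-coordinate of such lifts, and the base maps into $S$ from the $S$-coordinate.

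For the universal property, the first step is to observe that $f\colon X'\to X$ is necessarily induced by an element of $G$. By Theorem \ref{theorem: finiteness of automorphism groups of smooth}, since $(d,k)\ne(4,2)$ and both hypersurfaces are smooth, $f$ is the restriction of some $g\in\PSL(V)$ with $g\cdot X'=X$. The compatibility $f\circ\overline{a}=\overline{a}\circ f$ forces $g\overline{a}g^{-1}\overline{a}^{-1}$ to fix the smooth hypersurface $X$ pointwise, hence to equal the identity of $\PSL(V)$; thus $g\in G$. In particular $g\cdot s'_0=x$ where $s'_0\in S'$ denotes the central point, so $(g^{-1},x)\in G\times S$ lies in $\pi^{-1}(s'_0)$. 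Applying the unique lifting property to $S'\hookrightarrow U$ with initial condition $(g^{-1},x)$ and shrinking $S'$ to a connected neighborhood of $s'_0$, one obtains a holomorphic section $\sigma\colon S'\to G\times S$, $\sigma(s')=(h_{s'},\phi(s'))$, satisfying $h_{s'}\cdot\phi(s')=s'$. The map $\phi$ is the desired base morphism $S'\to S$, and $\widetilde{f}$ is defined fiberwise by the isomorphism $h_{s'}^{-1}\colon X_{s'}\to X_{\phi(s')}$ induced by $h_{s'}\in G$; this commutes with $\overline{A}$ because $h_{s'}\in G$. At $s'=s'_0$ we read off $h_{s'_0}=g^{-1}$, so the central fiber map is $g=f$. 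Uniqueness of $\widetilde{f}$ is precisely the uniqueness of the lift $\sigma$ through $(g^{-1},x)$.

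For the moreover clause, let $g\colon X_1\to X_2$ commute with $\overline{A}$ between two fibers of $\X_S$, with $X_i=Z(F_{s_i})$ and $s_1,s_2\in S$. The same argument places $g\in G$ with $g\cdot s_1=s_2$. Now the two points $(e,s_2)$ and $(g,s_1)$ of $G\times S$ both lie in $\pi^{-1}(s_2)$, so must be $G_x$-equivalent; writing the equivalence explicitly yields an $h\in G_x$ with $g=h^{-1}$ and $s_1=h\cdot s_2$, whence $g\in G_x$. Since $G_x$ acts on $S$ and hence on $\X_S$, the action of $g\in G_x$ defines a morphism $\widetilde{g}\in\C_{d,k}^T$ that fixes the central fiber over $x$ and restricts to $g$ on $X_1$. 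Uniqueness: any other extension is fiberwise given by some $k_s\in G$ with $k_s\cdot s\in S$, and the same $G_x$-equivalence argument forces $k_s\in G_x$; holomorphicity combined with finiteness of $G_x$ then makes $k_s$ locally constant on the connected $S$, and the value $k_{s_1}=g$ pins it down.

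The main obstacle is bridging the algebraic Luna slice picture and the required analytic lifting; this is exactly why the preceding setup shrinks $S$ to a contractible $G_x$-invariant neighborhood in items (v)-(vi), which guarantees that $\pi$ has the needed unique lifting property. Granted this analytic refinement, the proposition reduces to two clean inputs: the identification of $\overline{A}$-equivariant isomorphisms of smooth degree-$d$ $k$-folds with elements of $G$ (via Matsumura--Monsky plus rigidity), and standard bookkeeping with sections of the principal $G_x$-bundle $\pi$.
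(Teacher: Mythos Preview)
Your proof is correct and follows essentially the same approach as the paper: both arguments lift through the principal $G_x$-bundle $G\times S\to U$ to produce the base map and fiberwise isomorphisms, and both use $G_x$-equivalence of preimages to force $g\in G_x$ in the second part. You supply more detail than the paper does---in particular the justification via Matsumura--Monsky that an $\overline{A}$-equivariant isomorphism of smooth hypersurfaces is induced by an element of $G$, which the paper uses implicitly when it writes the lift as $(f^{-1},x)$.
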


\begin{proof}[Proof of proposition \ref{prop: universal deformation}]
The base $S^{\prime}$ lies in $U$ and is covered by $G\times S$. Thus we have a unique lifting $S^{\prime}\hookrightarrow G\times S$, sending $x^{\prime}$ to $(f^{-1}, x)$. In other words, we have uniquely $\widetilde{f}\colon \X_{S^{\prime}}\longrightarrow \X_S$, which restricts to $f$ on $X^{\prime}$.

Now suppose $X_1$, $X_2$ are two fibers of $\X_S$ with isomorphism $g\colon X_1\cong X_2$. Denote $x_1$, $x_2$ the corresponding base points in $S$. Then $(g,x_1),(id,x_2)\in G\times S$ have the same image in $U$. Since $G\times S\longrightarrow U$ is a principal $G_x$-bundle, the two pairs $(g,x_1)$ and $(id,x_2)$ are $G_x$-equivalent, hence $g\in G_x$. The corollary follows.
\end{proof}

We have the following lemma, which will be used in the proof of proposition \ref{proposition: properties of local period map}. Since it holds for general  degree $d$ $k$-folds, we state and prove it here.
\begin{lem}
\label{lemma: glue automorphisms}
Let
\begin{equation*}
\begin{tikzcd}
\X \arrow[hook]{r}\arrow{d} &S\times\PP V\arrow{dl} \\
S
\end{tikzcd}
\end{equation*}
be a family of smooth degree $d$ $k$-folds, with the base $S$ contractible. Suppose there is a group $\widetilde{A}$, such that for all $s\in S$, the fiber $\X_s$ admits a biregular action of $\widetilde{A}$, with induced actions on $H^n(\X_s,\ZZ)$ compatible with respect to the local trivialization. Then there is an action of $\widetilde{A}$ on the whole family $\X\longrightarrow S$ inducing on each fiber the existed action.
\end{lem}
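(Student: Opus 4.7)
The plan is to construct, for each $a \in \widetilde{A}$, a single holomorphic morphism $\phi_a \colon \X \to \X$ over $S$ whose restriction to each fiber $\X_s$ is the prescribed biregular action of $a$; uniqueness will then force $a \mapsto \phi_a$ to be a group homomorphism, giving the desired global action.

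First I would introduce the relative automorphism analytic space $\underline{\Aut}(\X/S) \to S$. This is representable because $\X \to S$ is a smooth projective family, its fibers being hypersurfaces in $\PP V$. By theorem \ref{theorem: finiteness of automorphism groups of smooth} each fiber $\Aut(\X_s)$ is finite, and finiteness of $\Aut(\X_s)$ forces the Lie algebra $H^0(\X_s, T_{\X_s})$ to vanish, so the relative automorphism space has no infinitesimal deformations at any point of any fiber. Consequently $\underline{\Aut}(\X/S) \to S$ is \'etale.

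Next I would exploit the contractibility of $S$. An \'etale map over a simply connected base is a disjoint union of copies of the base, so continuous sections of $\underline{\Aut}(\X/S) \to S$ are in bijection with points of any single fiber. For fixed $a \in \widetilde{A}$, the set-theoretic assignment $s \mapsto \rho_s(a)$ lifts, via the compatibility hypothesis, to the constant section of the constant local system $\underline{\Aut}(R^n\pi_*\ZZ) \to S$ determined by $a^*$. Because the representation $\Aut(\X_s) \to \Aut(H^n(\X_s, \ZZ))$ is faithful for smooth hypersurfaces in the degree and dimension range we consider, the set-theoretic section $s \mapsto \rho_s(a)$ must coincide with the unique continuous section extending any of its values, and is therefore holomorphic. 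This yields the morphism $\phi_a$, and uniqueness propagates to the group-homomorphism property of $a \mapsto \phi_a$.

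The hardest step will be establishing \'etaleness of $\underline{\Aut}(\X/S) \to S$ cleanly in the analytic setting together with the faithfulness of the cohomological representation for the relevant hypersurfaces; both are standard but need to be invoked carefully. The remainder of the argument is essentially formal: contractibility of $S$ reduces the gluing problem to choosing the correct connected component of an \'etale cover, and the compatibility hypothesis picks out that component canonically.
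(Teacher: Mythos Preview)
Your argument is essentially correct, but one sentence is imprecise and worth fixing. The claim ``an \'etale map over a simply connected base is a disjoint union of copies of the base'' is false as stated: it holds for \emph{finite} \'etale maps, but $\underline{\Aut}(\X/S)\to S$ can acquire isolated extra points over fibers with larger automorphism group, so it need not be a covering space. Fortunately your argument does not actually require this. \'Etaleness alone gives, through each point $(s_0,\rho_{s_0}(a))$, a local holomorphic section $\tau$; the induced action of $\tau(s)$ on $H^n(\X_s,\ZZ)$ is locally constant, hence equals $a^*$; and faithfulness then forces $\tau(s)=\rho_s(a)$. Thus the set-theoretic section is continuous at every point, hence holomorphic, and you never need global triviality of the Aut scheme. (Alternatively one can argue properness of $\underline{\Aut}(\X/S)\to S$ via separatedness of the moduli of smooth hypersurfaces, but this is unnecessary.)

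As for comparison with the paper: the paper's proof avoids the relative automorphism scheme and instead works locally, invoking a universal deformation of a fixed fiber $\X_s$ (theorem 2.5 of \cite{zheng2017}) with the property that any isomorphism between two fibers of the universal family is induced by an element of $\Aut(\X_s)$. This identifies the $\widetilde{A}$-action on each nearby fiber with a subgroup of $\Aut(\X_s)$, and lemma \ref{lemma: faithful of action on cohomology} (faithfulness on cohomology) together with the compatibility hypothesis pins that subgroup down to the original $\widetilde{A}$, so the local actions glue. Your route packages the same rigidity---vanishing of $H^0(\X_s,T_{\X_s})$---as \'etaleness of $\underline{\Aut}(\X/S)$ rather than as a slice-theoretic universal property, which is more functorial and arguably cleaner, at the cost of invoking representability of the relative automorphism functor. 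Both proofs rest on the identical pair of inputs: rigidity of automorphisms of smooth hypersurfaces, and faithfulness of the action on middle cohomology.
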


We need another lemma from \cite{javanpeykar2017complete} (proposition 2.12) and \cite{matsumura1963automorphisms}:
\begin{lem}
\label{lemma: faithful of action on cohomology}
For $d\ge 3$, $k\ge 2$, and a smooth degree $d$ $k$-fold $X$, the induced action of $\Aut(X)$ on $H^k(X,\ZZ)$ is faithful.
\end{lem}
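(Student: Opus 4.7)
The plan is to show that if $\sigma\in\Aut(X)$ acts trivially on $H^k(X,\ZZ)$, then $\sigma=\id$, by splitting on whether $(d,k)=(4,2)$.

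First I dispose of the special case $(d,k)=(4,2)$, which is exactly the case excluded from Theorem~\ref{theorem: finiteness of automorphism groups of smooth}(ii). Here $X$ is a smooth quartic surface, i.e.\ a polarized K3. The global Torelli theorem for K3 surfaces says that any automorphism acting trivially on $H^2(X,\ZZ)$ is the identity, which is exactly what we need. This case is genuinely exceptional because $\Aut(X)$ need not be contained in $\PSL(V)$ and can be infinite, so a Jacobian ring argument does not apply directly.

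For $(d,k)\ne(4,2)$ the plan is to use Matsumura's result together with Griffiths' residue description of primitive middle cohomology. By Theorem~\ref{theorem: finiteness of automorphism groups of smooth}(ii), $\sigma$ is the restriction of a projective linear automorphism, so $\sigma=[g]$ for some $g\in \GL(V)$ satisfying $g\cdot F=\alpha F$, where $F$ cuts out $X$ and $\alpha\in\CC^\times$. Writing $R(F)=\Sym^\bullet(V^*)/J(F)$ for the Jacobian ring, Griffiths gives $\Aut(X)$-equivariant isomorphisms
\begin{equation*}
H^{k-p,p}_{\mathrm{prim}}(X)\;\cong\;R(F)_{(p+1)d-k-2}, \qquad 0\le p\le k,
\end{equation*}
where the action of $g$ on the right side differs from the action on the left by the character $\det(g)\cdot\alpha^{-(p+1)}$ coming from the scaling of the invariant form $\Omega=\sum(-1)^i x_i\,dx_0\wedge\cdots\widehat{dx_i}\cdots\wedge dx_{k+1}$ and of $F^{p+1}$ in the residue formula. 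Triviality of $\sigma^*$ on $H^k(X,\ZZ)$ therefore forces $g$ to act on each nonzero graded piece $R(F)_{(p+1)d-k-2}$ by a scalar $\mu_p\in\CC^\times$, and the ratios $\mu_{p+1}/\mu_p$ are all equal to a fixed power of $\alpha$.

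Now I exploit that $R(F)$ is generated as a graded algebra by $R(F)_1=V^*$ and, for $m<d-1$, equals the full symmetric power $\Sym^m V^*$ (since the Jacobian ideal lives in degrees $\ge d-1$). When $d\ge k+3$, the piece with $p=0$ has degree $m=d-k-2$ in the range $1\le m<d-1$, so $g$ acts as a scalar on $\Sym^m V^*$, which forces $g$ itself to be a scalar on $V^*$; thus $[g]=\id$ in $\PSL(V)$ and $\sigma=\id$. The low-degree cases $d=3,4$ with $k$ in a correspondingly low range are handled by the same mechanism applied to $p=1$ and, when needed, by combining scalar actions across consecutive graded pieces and using Macaulay duality for the Gorenstein ring $R(F)$ (whose socle sits in degree $(k+2)(d-2)$) to propagate ``scalar on $R(F)_m$'' back to ``scalar on $V^*$''. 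The main obstacle is precisely this bookkeeping: because the Griffiths isomorphism introduces a $p$-dependent twist, triviality on cohomology does not translate to triviality on the Jacobian ring but only to scalar action, and one must track the sequence $\mu_p$ carefully together with the algebra generation of $R(F)$ to collapse $g$ to a homothety; the K3 case is the only case where this route fails, which is why Torelli is invoked separately.
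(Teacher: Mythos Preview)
The paper does not prove this lemma; it cites \cite{javanpeykar2017complete} (Proposition 2.12) and \cite{matsumura1963automorphisms}. Your Griffiths--Jacobian ring approach is the standard route and your two explicit cases are fine: K3 Torelli handles $(d,k)=(4,2)$ (though in the paper's convention $\Aut(X)\subset\PSL(V)$ is the \emph{linear} automorphism group, finite even for quartic K3s, so this case is less exceptional than you suggest), and for $d\ge k+3$ the $p=0$ piece is $R(F)_{d-k-2}=\Sym^{d-k-2}V^*$, where a semisimple $g$ acting by a scalar is forced to be a homothety on $V^*$.

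The gap is in the range $3\le d\le k+2$. You describe this as ``the low-degree cases $d=3,4$ with $k$ in a correspondingly low range,'' but that mis-states the situation: for each fixed $d$ this is the entire Fano/Calabi--Yau range $k\ge d-2$, not a few small $k$. Worse, whenever $k+2\equiv 0$ or $1\pmod d$ every nonzero Griffiths piece $R(F)_{(p+1)d-k-2}$ sits in degree $0$ or $\ge d-1$, hence is a genuine quotient by the Jacobian ideal, and ``$g$ acts as a scalar on $R(F)_m$'' no longer immediately forces $g$ scalar on $V^*$. Your one-line appeal to ``combining scalar actions across consecutive graded pieces and Macaulay duality'' does not close this: for cubic surfaces the only piece is $R(F)_2=\Sym^2V^*/J_2$, for cubic fourfolds the content is that $g$ acts trivially on $R(F)_3$, and Gorenstein duality alone does not let you propagate back to $R(F)_1$. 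One can complete the argument with extra input---a case analysis using $gF=F$ and the structure of $J_{d-1}$, a Lefschetz fixed-point comparison of $\chi(X^{\sigma})$ with $\chi(X)$, or (for cubic surfaces) the classical $27$-lines argument---but some such ingredient is required and your sketch does not supply it.
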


\begin{proof}[Proof of lemma \ref{lemma: glue automorphisms}]
Take any $s\in S$. By theorem 2.5 in \cite{zheng2017}, there is a universal hypersurface family $\X^{\prime}$ of $\X_s$, such that any isomorphism between two fibers (may coincide) of $\X^{\prime}$ comes from an automorphism of the central fiber $\X_s$. There exists an open neighbourhood $U$ of $s$ in $S$, with a unique morphism $\X|_U\longrightarrow \X^{\prime}$. Then for any $s^{\prime}\in U$, the action of $\widetilde{A}$ on $\X_{s^{\prime}}$ is induced by a subgroup $\widetilde{A}^{\prime}$ of $\Aut(\X_s)$. By lemma \ref{lemma: faithful of action on cohomology}, and compatibility of induced action of $\widetilde{A}$ on $\X_s$ and $\X_{s^{\prime}}$, we have that $\widetilde{A}=\widetilde{A}^{\prime}$ as subgroups of $\Aut(\X_s)$. Therefore, the actions of $\widetilde{A}$ on fibers of $\X\longrightarrow S$ glue to an action of $\widetilde{A}$ on the whole family.
\end{proof}

\section{Review: Period Map for Smooth Cubic Fourfolds}
\label{section: review of theory of cubic fourfolds}
In this section we recall some fundamental facts on period map for cubic fourfolds, the main references are \cite{voisin1986theoreme}, \cite{hassett}, \cite{looijenga2009period}, \cite{laza2009moduli,laza2010moduli}.

Take $(d,k)=(3,4)$. Then we have $\M$ the moduli of smooth cubic fourfolds, as a Zariski-open subset of its $\GIT$ compactification $\overline{\M}$. Let $X$ be a smooth cubic fourfold. We denote $\varphi_X$ to be the intersection pairing on $H^4(X, \ZZ)$. Then $(H^4(X,\ZZ),\varphi_X)$ is an odd unimodular lattice of signature $(21,2)$. Denote $\eta_X$ to be square of the hyperplane class of $X$. Then $H^4_0(X, \ZZ)=\eta_X^{\perp}$ is an even sublattice of discriminant $3$. Now we define $(\Lambda, \Lambda_0, \eta)$ to be an abstract data isomorphic to $(H^4(X, \ZZ), H^4_0(X, \ZZ), \eta_X)$, this does not depend on the choice of the cubic fourfold $X$.

\begin{de}
A marking of the cubic fourfold $X$ is an isomorphism $\Phi\colon H^4(X, \ZZ)\cong\Lambda$ sending $\eta_X$ to $\eta$.
\end{de}

Two marked cubic fourfolds $(X_1, \Phi_1)$ and $(X_2, \Phi_2)$ are called equivalent if there exists a linear isomorphism $g\colon X_1\longrightarrow X_2$ such that $\Phi_1=g^*\Phi_2$. Let $\M^m$ be the set of equivalence classes of marked cubic fourfolds. From \cite{zheng2017}, section 3, we have:

\begin{prop}
The set $\M^m$ is a complex manifold in a natural way.
\end{prop}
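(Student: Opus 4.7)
The plan is to realize $\M^m$ as the orbit space of a free, proper, holomorphic action of $\PSL(V)$ on an étale cover of the parameter space of smooth cubic fourfolds. First I would set $U=\PP\Sym^3(V^*)^{sm}$, the Zariski-open submanifold of $\PP\Sym^3(V^*)$ parametrizing smooth cubics, and consider the universal family $\pi\colon\X\to U$. The local system $R^4\pi_*\ZZ$ comes equipped with the locally constant intersection pairing together with the locally constant class $\eta$ given by the squared hyperplane section. The set $\widetilde{U}$ of pairs $(F,\Phi)$ consisting of $F\in U$ and a marking $\Phi\colon H^4(X_F,\ZZ)\cong\Lambda$ with $\Phi(\eta_{X_F})=\eta$ then forms an étale covering space of $U$ whose deck transformation group is a subgroup of $\Aut(\Lambda,\eta)$. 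Hence $\widetilde{U}$ inherits a canonical complex manifold structure from $U$.

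Next I would lift the natural $\PSL(V)$-action on $U$ to $\widetilde{U}$ by $g\cdot(X,\Phi)=(g(X),\Phi\circ g^*)$, where $g^*\colon H^4(g(X),\ZZ)\to H^4(X,\ZZ)$ is the pullback along the biregular map $g\colon X\to g(X)$. Two points of $\widetilde{U}$ represent equivalent marked cubic fourfolds precisely when they lie in the same $\PSL(V)$-orbit, so as a set $\M^m=\PSL(V)\bs\widetilde{U}$. To promote this identification to a complex manifold structure I would verify that the lifted action is free and proper. For freeness, if $g$ fixes $(X,\Phi)$ then $g\in\Aut(X)$ and $g^*$ is the identity on $H^4(X,\ZZ)$; Lemma \ref{lemma: faithful of action on cohomology} then forces $g=\id$.

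For properness, I would invoke the fact that every smooth cubic fourfold defines a GIT-stable point for the $\SL(V)$-action on $\PP\Sym^3(V^*)$ (the case $A=\mu_3$ of Lemma \ref{lemma: action of N and stableness}), so that the $\PSL(V)$-orbits in $U$ are closed with finite stabilizers, whence the action on $U$ is proper. Since $\widetilde{U}\to U$ is an étale covering equivariant for the lifted action, the action on $\widetilde{U}$ is proper as well. Combined with freeness, this implies that $\PSL(V)\bs\widetilde{U}$ carries a canonical structure of complex manifold, and this is the structure we put on $\M^m$.

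The main technical step I expect is verifying properness of the $\PSL(V)$-action on $U$; once GIT-stability of smooth cubics is in hand, this is the standard consequence of closedness of orbits and finiteness of stabilizers for stable points. A secondary subtlety is checking that the lifted action on $\widetilde{U}$ is holomorphic and well-defined, but this is immediate because $\widetilde{U}\to U$ is a local biholomorphism and any $\PSL(V)$-orbit lifts uniquely by the path-lifting property of coverings.
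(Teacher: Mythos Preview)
Your argument is correct and takes a genuinely different route from the paper. The paper (via the cited reference \cite{zheng2017}, and visibly in the analogous proof for $\F^m$ in Proposition~\ref{proposition: moduli of A-marked cubic fourfolds}) builds the manifold structure locally: it uses Luna's \'etale slice theorem to produce, around each marked cubic, a universal deformation $S$ serving as a chart, checks that the resulting map $S\to\M^m$ is injective, and then verifies Hausdorffness by hand using the extension property of isomorphisms from Proposition~\ref{prop: universal deformation}. You instead work globally: you pass to the \'etale cover $\widetilde U\to U=\PP\Sym^3(V^*)^{sm}$ parametrizing markings, lift the $\PSL(V)$-action, and invoke the quotient manifold theorem after checking freeness (via Lemma~\ref{lemma: faithful of action on cohomology}) and properness (via GIT-stability of smooth cubics, which lifts along the equivariant covering).

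Your approach is cleaner for $\M^m$ itself: properness of the stable locus gives Hausdorffness for free, and you avoid the chart-by-chart bookkeeping. The paper's slice-based approach, on the other hand, yields the explicit local model $S$ that is reused throughout Section~\ref{section: local period map}, and it adapts without change to the symmetric moduli $\F^m$, where the relevant group is $N$ rather than $\PSL(V)$ and the slice already incorporates the $A$-symmetry. So the trade-off is conceptual economy in your version versus a construction that feeds directly into the rest of the paper's arguments.
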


Next we define the period domain and period map for cubic fourfolds. Let
\begin{equation*}
\widetilde{\D}\coloneqq\PP\{x\in (\Lambda_0)_{\CC}\big{|}\varphi(x,x)=0, \varphi(x, \overline{x})<0\}.
\end{equation*}
this is an analytically open subset of a quadric hypersurface in $\PP(\Lambda_0)_{\CC}$, and has two connected components. We have naturally a holomorphic map
\begin{equation*}
\widetilde{\Prd}\colon \M^m\longrightarrow \widetilde{\D}
\end{equation*}
sending $(X, \Phi)\in \M^m$ to $\Phi(H^{3,1}(X))$. It is called the local period map for cubic fourfolds.

Let $\widehat{\D}$ be one connected component of $\widetilde{\D}$ and $\widehat{\Gamma}$ the index $2$ subgroup of $\Aut(\Lambda,\varphi, \eta)$ which respects the component $\widehat{\D}$. Then $\widehat{\Gamma}$ is an arithmetic group acting on $\widehat{\D}$ and $\widetilde{\Prd}$ descends to
\begin{equation*}
\Prd\colon \M\longrightarrow \widehat{\Gamma}\bs\widehat{\D},
\end{equation*}
which is called the (global) period map for cubic fourfolds.

\begin{rmk}
The subgroup $\widehat{\Gamma}$ consists of elements in $\Gamma$ with spinor norm 1. Since there exist vectors in $\Lambda_0$ with self intersection $-2$, the group $\widehat{\Gamma}$ is of index $2$ in $\Aut(\Lambda, \varphi,\eta)$.
\end{rmk}

The global Torelli theorem is originally proved by Voisin (\cite{voisin1986theoreme}), with an erratum (\cite{voisin2008erratum}) based on some work by Laza (\cite{laza2009moduli}):
\begin{thm}[Voisin]
\label{theorem: global Torelli for cubic fourfolds}
The period map $\Prd$ is an open embedding.
\end{thm}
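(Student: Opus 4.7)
The strategy splits into two independent tasks: showing that $\widetilde{\Prd}$ is locally a biholomorphism (infinitesimal Torelli) and showing that $\Prd$ is injective (global Torelli). Since $\dim \M = \dim \widehat{\Gamma}\bs\widehat{\D} = 20$, the infinitesimal statement together with the inverse function theorem immediately promotes $\Prd$ to an open map that is locally injective, so the essential content is global injectivity.

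For infinitesimal Torelli I would invoke Griffiths' description of the Hodge filtration of a smooth hypersurface $X=Z(F)\subset \PP V$ via the Jacobian ring $J(F)=\CC[x_0,\ldots,x_5]/(\partial_0 F,\ldots,\partial_5 F)$. For a cubic fourfold this produces canonical identifications $H^{3,1}(X)\cong J(F)_0$, $H^{2,2}(X)_{\mathrm{prim}}\cong J(F)_3$, together with $H^1(X,T_X)\cong J(F)_3$ for the Kuranishi tangent space. By Griffiths transversality and the Kodaira-Spencer interpretation, the derivative of the period map at $X$ becomes the multiplication pairing $J(F)_3\otimes J(F)_0\longrightarrow J(F)_3$, which is simply the identity. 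Hence $d\widetilde{\Prd}$ is a linear isomorphism at every point and $\widetilde{\Prd}$ is a local biholomorphism; descending to the $\widehat{\Gamma}$-quotient is routine since $\widehat{\Gamma}$ acts properly discontinuously and $\M^m \to \M$ is a $\widehat{\Gamma}$-torsor over the smooth locus.

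For global injectivity the cleanest modern route uses the Fano variety of lines $F(X)$. The theorem of Beauville-Donagi asserts that $F(X)$ is a hyperk\"ahler fourfold of $K3^{[2]}$-type, and the Abel-Jacobi map defined by the universal line yields a Hodge isometry $H^4(X,\ZZ)_{\mathrm{prim}} \cong H^2(F(X),\ZZ)_{\mathrm{prim}}(-1)$ intertwining $\eta_X$ with (a multiple of) the Pl\"ucker polarization. If $X_1, X_2$ lie in the same fiber of $\Prd$, the Hodge isometry on primitive $H^4$ lifts to a polarized Hodge isometry between $H^2(F(X_1))$ and $H^2(F(X_2))$, and Verbitsky's global Torelli theorem for hyperk\"ahler manifolds of $K3^{[2]}$-type, together with Markman's analysis of the monodromy and of parallel transport operators, then produces a polarized isomorphism $F(X_1)\cong F(X_2)$.

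The main obstacle is the final reconstruction step: recovering $X$ from the polarized hyperk\"ahler variety $(F(X),g)$. Heuristically, the polarization embeds $F(X)\subset \mathrm{Gr}(2,V)$ and $X$ should emerge as the swept locus $\bigcup_{\ell\in F(X)}\ell \subset \PP V$; the delicate point is checking that the sign and monodromy ambiguities in the Hodge isometry do not obstruct this canonical recovery, and that the resulting isomorphism $X_1\cong X_2$ intertwines the given cohomological data. This is precisely the subtlety which forced the erratum \cite{voisin2008erratum} to Voisin's original 1986 argument, drawing on input from \cite{laza2009moduli}. An alternative route, closer to Voisin's original proof, specializes to cubic fourfolds containing a plane so that the quadric bundle structure on the blow-up produces an associated K3 surface, thereby reducing the problem to global Torelli for K3 surfaces on a Noether-Lefschetz divisor and propagating the result to the generic cubic fourfold by a monodromy argument.
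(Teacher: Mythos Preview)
The paper does not prove this theorem. It is stated as a cited result of Voisin (\cite{voisin1986theoreme}, with the erratum \cite{voisin2008erratum} drawing on \cite{laza2009moduli}), and the paper simply invokes it without argument. There is therefore no ``paper's own proof'' to compare your proposal against.

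That said, your sketch is a reasonable outline of the mathematics behind Voisin's theorem and its later reformulations. Two remarks. First, the route through Verbitsky's global Torelli theorem for hyperk\"ahler manifolds is anachronistic relative to Voisin's 1986 proof; that approach is closer to the later treatments by Charles and Huybrechts--Rennemo. Voisin's original argument is essentially the alternative you describe at the end: specialize to cubic fourfolds containing a plane, use the associated $K3$ surface to reduce to global Torelli for $K3$'s on that Noether--Lefschetz divisor, and then extend to the generic cubic fourfold. Second, in your Verbitsky route the step ``a Hodge isometry on primitive $H^4$ lifts to a polarized Hodge isometry on full $H^2$ of the Fano varieties that is moreover a parallel transport operator'' is exactly where the real work lies; you acknowledge this, but as written it is a gap rather than a proof.
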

\begin{rmk}
In fact, the period map $\Prd$ is algebraic, see discussion in \cite{hassett2000special} (proposition 2.2.3).
\end{rmk}
We give a lemma which will be constantly used. See \cite{zheng2017} (theorem 1.1).
\begin{lem}
\label{lemma: isomotphism between automorphism groups}
Take $X$ a smooth cubic fourfold, then $\Aut(X)\cong \Aut(H^4(X,\ZZ),\varphi_X, \eta_X, H^{3,1}(X))$.
\end{lem}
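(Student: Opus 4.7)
The content of the lemma is that the natural map
\[
\Psi\colon \Aut(X)\longrightarrow \Aut(H^4(X,\ZZ),\varphi_X,\eta_X,H^{3,1}(X)),\qquad f\longmapsto f^*,
\]
is a group isomorphism. Well-definedness is clear: any biregular $f$ pulls back the topological intersection pairing, fixes the hyperplane class (hence $\eta_X$), and is a morphism of Hodge structures. Injectivity is immediate from Lemma \ref{lemma: faithful of action on cohomology} applied to $(d,k)=(3,4)$, which asserts that the $\Aut(X)$-representation on $H^4(X,\ZZ)$ is faithful; so any $f$ with $f^*=\id$ is already the identity.

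For surjectivity, the plan is to deduce the statement from the global Torelli theorem \ref{theorem: global Torelli for cubic fourfolds}. Given $g$ in the target, I would fix a marking $\Phi\colon H^4(X,\ZZ)\cong\Lambda$ chosen so that $p:=\Phi(H^{3,1}(X))\in\widehat{\D}$, and transport $g$ to $\tilde g:=\Phi\circ g\circ\Phi^{-1}\in\Aut(\Lambda,\varphi,\eta)$. Since $\tilde g$ fixes $p\in\widehat{\D}$, it preserves the connected component and hence lies in $\widehat{\Gamma}$. The two marked fourfolds $(X,\Phi)$ and $(X,\tilde g\Phi)$ both map under $\widetilde{\Prd}$ to $p$, so in particular they descend to the same class in $\widehat{\Gamma}\bs\widehat{\D}$. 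The goal is then to extract a biregular $f\in\Aut(X)$ with $f^*\Phi=\tilde g\Phi$, equivalent to $f^*=g$.

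The main obstacle is precisely this last extraction. The stated form of Torelli gives injectivity of $\Prd\colon\M\to\widehat{\Gamma}\bs\widehat{\D}$ on unmarked moduli, which only produces some biregular self-map of $X$; a priori it is not clear that one can choose this self-map to realize the prescribed Hodge isometry. Closing this gap is equivalent to showing that $\widetilde{\Prd}\colon\M^m\to\widehat{\D}$ itself is injective, or equivalently that the stabilizer $\widehat{\Gamma}_p$ is exhausted by the image of $\Psi$. This upgrade is the Hodge-isometric form of global Torelli, and I would reduce to the version proved directly from Voisin's argument as \cite{zheng2017} Theorem~1.1, rather than reprove it here; attempting to bootstrap it from the bare open-embedding statement is circular with the lemma itself.
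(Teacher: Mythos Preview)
Your proposal is correct and lands exactly where the paper does: the paper gives no self-contained proof of this lemma but simply cites \cite{zheng2017}, Theorem~1.1. You have unpacked more than the paper does---spelling out injectivity via Lemma~\ref{lemma: faithful of action on cohomology} and correctly diagnosing that the bare open-embedding form of Torelli (Theorem~\ref{theorem: global Torelli for cubic fourfolds}) does not by itself yield surjectivity, so that one must invoke the Hodge-isometric refinement from \cite{zheng2017}---but your endpoint and the paper's are the same external citation, and your observation about circularity with Proposition~\ref{prop: strong global torelli for cubic fourfolds} is exactly why the paper treats this lemma as an input rather than a consequence.
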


We have a refined version of theorem \ref{theorem: global Torelli for cubic fourfolds}:
\begin{prop}[Voisin, Hassett, Looijenga, Laza]
\label{prop: strong global torelli for cubic fourfolds}
The local period map $\widetilde{\Prd}$ is an open embedding, with image being the complement of a hyperplane arrangement invariant under the action of $\Aut(\Lambda, \eta)$ on $\widetilde{D}$.
\end{prop}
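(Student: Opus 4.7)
The plan is to decompose the assertion into three parts: $\widetilde{\Prd}$ is a local biholomorphism, $\widetilde{\Prd}$ is globally injective, and its image equals the complement of an $\Aut(\Lambda,\eta)$-invariant hyperplane arrangement. The first two parts combined give the open embedding; the third invokes the Looijenga-Laza description of the period image.

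First I would check that $\widetilde{\Prd}$ is a local biholomorphism. Both $\M^m$ and $\widetilde{\D}$ have complex dimension $20$, so it suffices to verify that the differential is injective at every marked cubic fourfold. This is the infinitesimal Torelli theorem for cubic fourfolds, which follows from Griffiths' Jacobian ring description of the Hodge filtration on $H^4(X,\CC)$ (and is already implicit in Voisin's proof of global Torelli). Next I would prove injectivity. If $(X_1, \Phi_1)$ and $(X_2, \Phi_2)$ map to the same period point, then $\Phi_2^{-1}\circ\Phi_1 \colon H^4(X_1,\ZZ)\to H^4(X_2,\ZZ)$ is an isometry preserving $\eta$ and sending $H^{3,1}(X_1)$ to $H^{3,1}(X_2)$. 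By Theorem \ref{theorem: global Torelli for cubic fourfolds} combined with Lemma \ref{lemma: isomotphism between automorphism groups}, this isomorphism is realized by a unique linear isomorphism $g\colon X_1\to X_2$, so the two marked pairs are equivalent in $\M^m$. Together with the local biholomorphism, this yields the open embedding.

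For the image characterization, I would appeal to the Looijenga-Laza theorem (\cite{looijenga2009period}, \cite{laza2009moduli, laza2010moduli}), which identifies the image of the global period map $\Prd \colon \M \to \widehat{\Gamma}\bs\widehat{\D}$ with $\widehat{\Gamma}\bs(\widehat{\D} - (\Ha_\Delta \cup \Ha_\infty))$, where $\Ha_\Delta$ is the nodal arrangement and $\Ha_\infty$ is the Hassett determinantal arrangement (both listed in the notation table). Since $\widetilde{\Prd}$ is the lift of $\Prd$ along the composition $\widetilde{\D} \to \widehat{\D} \to \widehat{\Gamma}\bs\widehat{\D}$ (using that $\Aut(\Lambda,\eta)$ swaps the two components of $\widetilde{\D}$), the image $\widetilde{\Prd}(\M^m)$ is exactly the preimage of $\Prd(\M)$, i.e.\ the complement in $\widetilde{\D}$ of the lifts of $\Ha_\Delta$ and $\Ha_\infty$. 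Both arrangements are defined by purely lattice-theoretic conditions on $\Lambda_0$ (the existence of a short root, respectively of a vector of prescribed discriminant, in the orthogonal complement of the period line), so they are automatically invariant under the full group $\Aut(\Lambda,\eta)$.

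The main obstacle is the image characterization: it is not something I would re-prove, but rather invoke as the deep input from Looijenga and Laza. The delicate bookkeeping is tracking how the two components of $\widetilde{\D}$ are exchanged by $\Aut(\Lambda,\eta)/\widehat{\Gamma}$, so that pulling back the arrangement from $\widehat{\Gamma}\bs\widehat{\D}$ yields an $\Aut(\Lambda,\eta)$-invariant (not merely $\widehat{\Gamma}$-invariant) arrangement on all of $\widetilde{\D}$; this requires checking that $\widehat{\Gamma}$-invariance together with the bijection between components upgrades to full invariance, which is immediate from the lattice-theoretic definition of the arrangements.
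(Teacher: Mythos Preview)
Your proposal is correct and follows essentially the same approach as the paper: injectivity via Theorem~\ref{theorem: global Torelli for cubic fourfolds} together with Lemma~\ref{lemma: isomotphism between automorphism groups}, and the image characterization by lifting the Looijenga--Laza result from $\widehat{\Gamma}\bs\widehat{\D}$ to $\widetilde{\D}$ (the paper defers the precise lift to Proposition~\ref{proposition: image of local period map in general}, whose argument matches your description). Your explicit invocation of infinitesimal Torelli is a detail the paper omits, but not a genuinely different step.
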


\begin{proof}
Combining theorem \ref{theorem: global Torelli for cubic fourfolds} and lemma \ref{lemma: isomotphism between automorphism groups} we have injectivity. The characterization of the image of $\widetilde{\Prd}$ is due to Looijenga (\cite{looijenga2009period}) and Laza (\cite{laza2010moduli} (theorem 1.1), more precise version will be discussed in proposition \ref{proposition: image of local period map in general}.
\end{proof}

\section{Period Maps for Symmetric Cubic Fourfolds}
\label{section: local period map}
\subsection{Local Period Map for Symmetric Cubic Fourfolds}
\label{subsection: T-markings}
In this section we are going to discuss the local and global period maps for symmetric cubic fourfolds. Let $(d,k)=(3,4)$, and fix a symmetry type $T=[(A,\lambda)]$ satisfying condition \ref{condition: smooth}. We first introduce the local period domains with action of arithmetic groups. Let $X=Z(F)$ for a generic point $F\in \V$. Recall that the action of $A$ on $X$ induces an action of $A$ on $H^{3,1}(X)$. This action is a character $\zeta\colon A\longrightarrow\CC^{\times}$ with trivial restriction on $\mu_{k+2}$. We denote
\begin{equation*}
H^4(X)_{\zeta}=\{x\in H^4(X)\big{|}ax=\zeta(a)x, \forall a\in A\}.
\end{equation*}
Define a Hermitian form $h\colon H^4(X)_{\zeta}\times H^4(X)_{\zeta}\longrightarrow \CC$ by $h(x,y)=\varphi(x,\overline{y})$. Denote $\sigma_X$ to be the action of $A$ on $H^4(X,\ZZ)$. Let $\sigma$ be a action of $A$ on $\Lambda$, making $(\Lambda, \eta, \sigma)$ isomorphic to $(H^4(X, \ZZ), \eta_X, \sigma_X)$. Denote $\Lambda_{\zeta}\subset\Lambda_0\otimes\CC$ to be the $\zeta$-eigenspace of the action of $A$ on $(\Lambda_0)_{\CC}$.

\begin{prop}
\label{proposition: hermitian form}
The Hermitian form $h$ has signature $(n^{\prime},2)$ if $\zeta=\overline{\zeta}$ (this is also equivalent to $\zeta(A)\subset\mu_2$); it has signature $(n^{\prime},1)$ otherwise. Here $n^{\prime}$ is a non-negative integer independent of the choice of $X$.
\end{prop}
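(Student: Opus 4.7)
The plan is to reduce the signature of $h$ on $H^4(X)_\zeta$ to an orthogonal sum of contributions from the Hodge pieces, with signs controlled by the Hodge--Riemann bilinear relations. First I note that $A$ acts by algebraic automorphisms, so it preserves both $\varphi$ and the Hodge decomposition; since $A$ is finite, every character $\chi$ appearing on $H^4(X, \CC)$ is unitary, so $\overline{\chi} = \chi^{-1}$. Moreover, $A$ commutes with complex conjugation on $H^4(X, \CC) = H^4(X, \RR) \otimes \CC$, so if $y \in H^4(X)_{\chi'}$ then $a\,\overline{y} = \overline{\chi'(a)}\,\overline{y}$. Using $A$-invariance of $\varphi$, for $x \in H^4(X)_\chi$ and $y \in H^4(X)_{\chi'}$ one computes $h(x, y) = \varphi(x, \overline{y}) = \chi(a)\,\overline{\chi'(a)}\,\varphi(x, \overline{y})$ for every $a \in A$, forcing $h(x, y) = 0$ unless $\chi = \chi'$. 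Hence the decomposition $H^4(X, \CC) = \bigoplus_\chi H^4(X)_\chi$ is $h$-orthogonal.

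I then identify the Hodge pieces of $H^4(X)_\zeta$. The line $H^{3,1}(X)$ has character $\zeta$ by definition; complex conjugation intertwines $H^{p,q}$ with $H^{q,p}$ and takes $\chi$ to $\overline{\chi}$, so $H^{1,3}(X)$ has character $\overline{\zeta}$. Therefore $H^{1,3}(X) \subset H^4(X)_\zeta$ iff $\zeta = \overline{\zeta}$; since $\zeta(a)$ is always a root of unity, this is equivalent to $\zeta(A) \subset \mu_2$. Accordingly, $H^4(X)_\zeta$ is the $h$-orthogonal direct sum of $H^{3,1}$, of $H^{2,2}_0(X) \cap H^4(X)_\zeta$, and (only when $\zeta = \overline{\zeta}$) of $H^{1,3}$. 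The Hodge--Riemann bilinear relations for primitive weight-$4$ classes---$i^{p-q}\,\varphi(x, \overline{x}) > 0$---then give $h(x, x) < 0$ on $H^{3,1}$ and $H^{1,3}$ and $h(z, z) > 0$ on $H^{2,2}_0(X)$, where I use that $H^4_0(X)$ coincides with the primitive part $H^4_{\mathrm{prim}}(X)$ because $H^2(X)$ is spanned by the hyperplane class. Setting $n' \coloneqq \dim_\CC (H^{2,2}_0(X) \cap H^4(X)_\zeta)$, the signature is $(n', 2)$ when $\zeta = \overline{\zeta}$ and $(n', 1)$ otherwise.

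Finally, $n'$ is independent of $X$ by a connectedness argument: $\V^{sm}$ is Zariski-open in the vector space $\V$ and therefore connected; lemma~\ref{lemma: glue automorphisms} extends the $A$-action to the universal family over any contractible neighborhood in $\V^{sm}$; and the resulting representation of $A$ on the local system $R^4\pi_\ast\ZZ$ has locally constant, hence constant, isomorphism class, so the multiplicity of $\zeta$ is fixed along the family. The main step to watch is pinning down the Hodge--Riemann signs on each primitive $H^{p,q}$ piece and checking that $H^{2,2}_0$ is indeed primitive; once these and the orthogonality of the $A$-eigenspaces are in hand, the case analysis and dimension count are immediate.
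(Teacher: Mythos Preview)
Your argument is correct and follows the same idea as the paper: the negative directions of $h$ come precisely from $H^{3,1}$ and $H^{1,3}$, and the case split is governed by whether $H^{1,3}$ (which has character $\overline{\zeta}$) lies in the $\zeta$-eigenspace. The paper's proof is a two-line version of yours, appealing directly to the known signature $(21,2)$ of $H^4(X,\ZZ)$ with negative part $H^{3,1}\oplus H^{1,3}$, while you spell out the $h$-orthogonality of eigenspaces and the Hodge--Riemann signs explicitly.

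Two small remarks. First, when $\zeta$ is the trivial character the class $\eta$ also lies in $H^4(X)_\zeta$, so your displayed decomposition should include $\CC\eta$ as an additional positive line; this does not affect the signature conclusion, only your formula for $n'$. Second, the appeal to lemma~\ref{lemma: glue automorphisms} is unnecessary: $A\subset\SL(V)$ already acts linearly on the whole universal family over $\V^{sm}$ by construction, so constancy of the $A$-representation on $R^4\pi_*\ZZ$ follows immediately from connectedness of $\V^{sm}$.
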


\begin{proof}
Notice that the lattice $H^4(X,\ZZ)$ has signature $(21,2)$, with negative part $H^{3,1}(X)\oplus H^{1,3}(X)$. If $\zeta(A)$ is not contained in $\mu_2$, we have $\zeta\ne\overline{\zeta}$. Since $H^{1,3}$ lies in $\overline{\zeta}$-eigenspace, the signature of $h$ is $(n^{\prime},1)$.

For the case $\zeta(A)\subset \mu_2$, both $H^{3,1}(X)$ and $H^{1,3}(X)$ are contained in $H_{\zeta}$, hence $h$ has signature $(n^{\prime}, 2)$.
\end{proof}

An isomorphisms $\Phi\colon (H^4(X, \ZZ), \eta_X, \sigma_X)\cong(\Lambda, \eta, \sigma)$ is called a T-marking of $X$. We consider pairs consisting of a smooth cubic fourfold and its T-marking. Two such pairs $(X_1, \Phi_1)$ and $(X_2, \Phi_2)$ are equivalent if there exists $g\in G$ such that $\Phi_1=g^*\Phi_2$. Let $\F^m$ be the set of equivalence classes of such pairs, we have:

\begin{prop}
\label{proposition: moduli of A-marked cubic fourfolds}
The set $\F^m$ is naturally a complex manifold.
\end{prop}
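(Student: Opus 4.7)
The plan is to transfer the analytic structure of the Luna slices from section \ref{universal defo} to $\F^m$ by spreading out a given T-marking via parallel transport along the trivial cohomology local system on a contractible slice.

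Given $[(X,\Phi)]\in \F^m$ with $X=Z(F)$, represent $x=[F]\in \PP\V^{sm}$ and take the contractible Luna slice $S\ni x$ constructed in section \ref{universal defo}, so that the restriction $\X_S\to S$ of the universal family is a universal deformation of $X$ as a cubic fourfold of type $T$. By lemma \ref{lemma: glue automorphisms}, the $A$-action on the central fiber extends to an $A$-action on $\X_S$, and since $S$ is contractible the local system $R^4\pi_*\ZZ$ is canonically trivial, with $\eta$ and $\sigma$ parallel in $s$. Hence the T-marking $\Phi$ spreads uniquely to a section $s\mapsto \Phi_s$ of T-markings on $\X_S$, producing a map $\iota_{(X,\Phi)}\colon S\longrightarrow \F^m$ which lifts the Luna chart $S\to G\dbs U\subset \F$.

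Next I would show $\iota_{(X,\Phi)}$ is injective. If $\iota_{(X,\Phi)}(s_1)=\iota_{(X,\Phi)}(s_2)$, then by definition there is $g\in G$ and a biregular isomorphism $\X_{s_1}\cong \X_{s_2}$ commuting with $\overline{A}$ and carrying $\Phi_{s_1}$ to $\Phi_{s_2}$. The second assertion of proposition \ref{prop: universal deformation} forces this identification to arise from some $g\in G_x$, while compatibility with the parallel-transported markings says $g$ acts trivially on $H^4(X,\ZZ)$; by lemma \ref{lemma: faithful of action on cohomology} this forces $g=\id$, so $s_1=s_2$.

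I would then declare the collection $\{\iota_{(X,\Phi)}\}$ to be an atlas and verify that transition maps are holomorphic. On the overlap of two such charts, any identification of the corresponding cubic fourfolds preserving $A$-action and markings is, by proposition \ref{prop: universal deformation}, realized uniquely by an element of $G$, and the resulting change of coordinates between the two slices is given by the $G$-action on $\PP\V^{sm}$ composed with the slice inclusions, which is analytic. Hausdorffness follows because, after shrinking $S$, the finite group $G_x$ permutes the (finitely many) T-markings on the central fiber and lemma \ref{lemma: faithful of action on cohomology} keeps inequivalent pairs separated; the forgetful map $\F^m\to \F$ is then a local étale cover whose fibers are the T-markings of a given fiber.

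The hard part will be verifying holomorphicity of the transition maps: this is where one has to correctly package the uniqueness clause of proposition \ref{prop: universal deformation} together with the faithfulness of $\Aut(X)\curvearrowright H^4(X,\ZZ)$, rather than merely producing set-theoretic bijections. Once that is in place, the complex manifold structure on $\F^m$ and the discreteness of the fibers of $\F^m\to \F$ follow directly.
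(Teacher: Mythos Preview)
Your chart construction and injectivity argument match the paper's proof closely: spread the T-marking along the contractible slice $S$, and use proposition \ref{prop: universal deformation} together with faithfulness of the action on cohomology to show the chart map $S\to\F^m$ is injective. (A small aside: you do not need lemma \ref{lemma: glue automorphisms} to extend the $A$-action to $\X_S$, since the $A$-action already lives on all of $\PP\V$ and restricts to the family over $S$.)

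The gap is in your Hausdorffness argument. You assert that ``the finite group $G_x$ permutes the (finitely many) T-markings on the central fiber,'' but the set of T-markings of a fixed $X$ is a torsor under $\Aut(\Lambda,\eta,\sigma)$, which is infinite; the fiber of $\F^m\to\F$ over $[X]$ is this infinite set modulo the finite group $G_x$. Knowing that $\F^m\to\F$ is a local homeomorphism to a Hausdorff base does not by itself imply $\F^m$ is Hausdorff (think of the line with two origins). What needs to be ruled out is precisely that two inequivalent T-markings $\Phi,\Phi'$ of the \emph{same} $X$ give non-separated points.

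The paper handles this as follows. Suppose $(X,\Phi)$ and $(X,\Phi')$ are non-separated; build the two chart maps $\alpha,\alpha'\colon S\to\F^m$ from the same slice $S$ using the two markings. Non-separation forces $\alpha(S)\cap\alpha'(S)\neq\varnothing$, so there is $x_1\in S$ with $(\X_{x_1},\Phi)$ and $(\X_{x_1},\Phi')$ equivalent in $\F^m$, i.e.\ there is $g\in\Aut(\X_{x_1})$ with $\Phi'=\Phi\circ g^*$. The second clause of proposition \ref{prop: universal deformation} then propagates $g$ to an automorphism of the central fiber $X$ satisfying the same relation, so $(X,\Phi)$ and $(X,\Phi')$ were equivalent after all. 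This propagation step is exactly what your sketch is missing; once you insert it, the rest of your outline goes through.
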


\begin{proof}
First we describe the local charts on $\F^m$. Take a point $(X,\Phi)\in \F^m$, and take a universal deformation $\X_S\longrightarrow S$ of $X$ as in proposition \ref{prop: universal deformation}. Since $S$ is contractible, the local system $R^4\pi_*(\ZZ)$ is trivializable over $S$ and the T-marking $\Phi$ of $X$ naturally extends to T-marking of every fiber of $\X_S\longrightarrow S$. Thus we have a map
\begin{equation*}
\alpha\colon S\longrightarrow \F^m.
\end{equation*}
We first show that $\alpha$ is injective. Suppose $X_1$, $X_2$ are two fibers of $\X_S$, with $\Phi_1, \Phi_2$ the induced T-markings by $\Phi$ respectively, such that $(X_1,\Phi_1)$ and $(X_2,\Phi_2)$ represent the same point in $\F^m$. Then there exists $g\colon X_1\cong X_2$ with $\Phi_2=\Phi_1\circ g^*$. By proposition \ref{prop: universal deformation} we have $g\in G_x$ and $\Phi=\Phi\circ g^*$, hence $g^*=id$. By lemma \ref{lemma: isomotphism between automorphism groups} we have $g=id$. Thus $\alpha$ is injective.

By definition, $\F^m$ is covered by countably many such $\alpha(S)$, and they form a basis of a topology. To show $\F^m$ is a complex manifold, we need to prove that the topology is Hausdorff. Suppose not, then we have two non-separated points $(X,\Phi), (X^{\prime},\Phi^{\prime})\in \F^m$. Then $X$ and $X^{\prime}$ are isomorphic (because $\F$ is separated). Without loss of generality, we just assume $X^{\prime}=X$. Take $\X_S\longrightarrow S$ the universal family as in proposition \ref{prop: universal deformation}, and
\begin{equation*}
\alpha,\alpha^{\prime}\colon S\longrightarrow \F^m
\end{equation*}
induced by $\Phi$ and $\Phi^{\prime}$. Now since $(X,\Phi)$ and $(X^{\prime}, \Phi^{\prime})$ are non-separated, we have $\alpha(S)\cap \alpha^{\prime}(S)\ne \varnothing$. Thus there exists $x_1\in S$ with corresponding cubic fourfold $X_1$, such that the two pairs $(X_1,\Phi)$ and $(X_1,\Phi^{\prime})$ represent the same point in $\F^m$. Then there is an automorphism $g$ of $X_1$, such that $\Phi^{\prime}=\Phi\circ g^*$. Proposition \ref{prop: universal deformation} implies that $g$ is also an automorphism of $X$ and satisfies the above relation. Thus $(X,\Phi)=(X,\Phi^{\prime})$ in $\F^m$, contradiction. We showed the Hausdorff property, hence conclude that $\F^m$ is naturally a complex manifold.
\end{proof}

\begin{rmk}
Proposition \ref{proposition: moduli of A-marked cubic fourfolds} can be generalized to degree $d$ $k$-folds ($d\ge 3$, $k\ge2$) with specified automorphism group. The argument is the same.
\end{rmk}

When $h$ has signature $(n^{\prime},1)$, we define $\D_T=\PP\{x\in \Lambda_{\zeta}\big{|}\varphi(x,\overline{x})<0\}$, which is a hyperbolic complex ball of dimension $n^{\prime}$; when $h$ has signature $(n^{\prime},2)$, define $\D_T$ to be a component of $\PP\{x\in (\Lambda_0)_{\zeta}\big{|}\varphi(x, x)=0, \varphi(x,\overline{x})<0\}$, which is a type $\IV$ symmetric domain of dimension $n^{\prime}$.

We define local period map for symmetric cubic fourfolds of type $T$ as the map from $\F^m$ to $\D_T\sqcup \overline{\D_T}$, sending $(X,\Phi)$ to $\Phi(H^{3,1}(X))$, still denoted by $\widetilde{\Prd}$. We make the choice of $\D_T$ such that $\widetilde{\Prd}$ has nonempty image in $\D_T$. Write $\D=\D_T$ if there is no confusion.
\subsection{Properties of Local Period Maps for Symmetric Cubic Fourfolds}
We need to review basic works by Laza (\cite{laza2009moduli,laza2010moduli}). In \cite{laza2009moduli} Laza classified stable and semistable cubic fourfolds. One of the main theorems is:
\begin{thm}[\cite{laza2009moduli}]
\label{theorem: laza ADE stable}
A cubic fourfold with at worst $\ADE$-singularities is stable.
\end{thm}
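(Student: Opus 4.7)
The plan is to apply the Hilbert--Mumford numerical criterion to the $\SL(V)$-action on $\Sym^3(V^*)$ with $\dim V = 6$. By the criterion, $F$ is stable iff for every non-trivial one-parameter subgroup $\lambda\colon \mathbb{G}_m \to \SL(V)$, diagonalized in some basis $x_0,\ldots,x_5$ with integer weights $a_0\ge a_1\ge\cdots\ge a_5$, $\sum a_i = 0$, at least one monomial $x^I$ with nonzero coefficient in $F$ has strictly positive weight $\sum_j a_j i_j$. Equivalently, $F$ fails to be stable iff there exist coordinates so that every monomial of $F$ has non-positive $\lambda$-weight for some $\lambda$. The plan is to prove the contrapositive: any such destabilizable $F$ already has a singularity worse than $\ADE$.

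First I would reduce to a finite combinatorial problem. Only the vertices of the rational polytope cut out by the inequalities ``every cubic monomial of $F$ has weight $\le 0$'' can give maximal destabilizers, and after restricting to the Weyl chamber $a_0\ge\cdots\ge a_5$ there are finitely many such vertices. This produces an explicit finite list of maximal destabilizing $1$-PS $\lambda_1,\dots,\lambda_r$, together with the corresponding linear subspaces $Z_{\lambda_i}\subset \Sym^3(V^*)$ of cubics whose monomials all have nonpositive $\lambda_i$-weight. The non-stable locus of the $\SL(V)$-action is then the $\SL(V)$-saturation of $\bigcup_i Z_{\lambda_i}$.

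Second, for each $i$ I would analyze $\mathrm{Sing}(Z(F))$ for $F\in Z_{\lambda_i}$ and exhibit a singularity incompatible with $\ADE$. The weight condition forces $F$ to vanish to a prescribed multiplicity along the positive-weight coordinate subspace $L_{\lambda_i}=\{x_j=0:\,a_j>0\}$, which typically produces either a non-isolated singular locus (immediately ruling out $\ADE$) or an isolated point whose multiplicity, Hessian corank, or Milnor number exceeds the $\ADE$ bound. Recall that an $\ADE$ singularity on a fourfold has local equation $u_1^2+u_2^2+u_3^2+g(u_4,u_5)$ with $g$ a du~Val surface germ, so the singular point is a double point of Hessian corank at most $2$; this is the concrete criterion I would check against in each of the $r$ cases.

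The main obstacle is precisely this case-by-case singularity analysis. For each destabilizer $\lambda_i$ one must verify that the generic member of $Z_{\lambda_i}$ has a non-$\ADE$ singularity, and then argue that \emph{every} $F\in Z_{\lambda_i}$ does too, since degenerating inside a $Z_{\lambda_i}$ can only make the singular behavior more severe (semicontinuity of the singular locus). Bounding the singularities forced by each weight pattern and matching them uniformly against the $\ADE$ list is where the real work lies; Laza's paper \cite{laza2009moduli} executes this bookkeeping in full, and my proposal above is only its conceptual scaffolding.
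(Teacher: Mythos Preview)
The paper does not prove this theorem at all: it is quoted without proof as a result of Laza \cite{laza2009moduli}, in the course of reviewing the background needed for the period-map analysis. So there is no ``paper's own proof'' to compare against here.

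That said, your outline is an accurate conceptual summary of how Laza's argument in \cite{laza2009moduli} actually proceeds: reduce via Hilbert--Mumford to a finite list of maximal destabilizing one-parameter subgroups, and for each one show that the corresponding cubics carry singularities strictly worse than $\ADE$. You correctly identify the substantive difficulty as the case-by-case singularity analysis, and you correctly note that your proposal is only the scaffolding for that computation rather than the computation itself. One small caution: the claim that ``degenerating inside a $Z_{\lambda_i}$ can only make the singular behavior more severe'' is morally right but is not by itself a proof that \emph{every} member of $Z_{\lambda_i}$ is non-$\ADE$; in practice Laza handles this by explicitly describing the singular locus forced by each weight pattern, not by a semicontinuity shortcut.
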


Laza proved that the period map $\Prd\colon \M\longrightarrow \widehat{\Gamma}\bs\widehat{\D}$ extends to the moduli space $\M_1$ of cubic fourfolds with at worst $\ADE$ singularities, and characterized its image. The results are gathered in the following theorem:
\begin{thm}[\cite{laza2010moduli}]
\label{theorem: laza extension}
The period map $\Prd\colon \M\longrightarrow \widehat{\Gamma}\bs \widehat{\D}$ has image $\widehat{\Gamma}\bs (\widehat{\D}-\Ha_{\infty}-\Ha_{\Delta})$, and extends holomorphically to
\begin{equation*}
\Prd\colon \M_1\longrightarrow \widehat{\Gamma}\bs\widehat{\D}
\end{equation*}
with image $\widehat{\Gamma}\bs(\widehat{\D}-\Ha_{\infty})$.
Here $\Ha_{\infty}, \Ha_{\Delta}$ are two $\widehat{\Gamma}$-invariant hyperplane arrangements in $\widehat{\D}$, with the quotients $\widehat{\Gamma}\bs\Ha_{\infty}$ and $\widehat{\Gamma}\bs\Ha_{\Delta}$ irreducible.
\end{thm}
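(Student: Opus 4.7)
The plan is to recapitulate Laza's proof in four steps: pin down the image on $\M$, extend across $\ADE$-singular fibers, identify the image on $\M_1$, and check irreducibility of the two boundary divisors.

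First, to identify the image on $\M$. By Proposition \ref{prop: strong global torelli for cubic fourfolds} the local period map $\widetilde{\Prd}$ is an open embedding onto the complement of some $\Aut(\Lambda,\eta)$-invariant hyperplane arrangement in $\widetilde{\D}$, so the task is to describe that arrangement. For the discriminant part, a one-parameter degeneration of smooth cubic fourfolds to a nodal cubic produces a Picard--Lefschetz vanishing class $\delta\in\Lambda_0$ with $\varphi(\delta,\delta)=-2$, and the period of nearby smooth fibers tends to $\delta^{\perp}$; this suggests defining $\Ha_{\Delta}$ as $\bigcup_{\delta}\PP(\delta^{\perp})\cap\widehat{\D}$ over norm-$(-2)$ classes in $\Lambda_0$, and then checking that this is precisely the locus unhit by periods of smooth cubics. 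For the infinity part one must analyze non-$\ADE$ degenerations; Laza's GIT analysis of cubic fourfolds singles out the chordal cubic (the secant variety of a rational normal quartic in $\PP^{4}$) as the unique strictly semistable orbit with non-$\ADE$ reduced minimal orbit, and a limit mixed Hodge computation shows that its period class lies on $v^{\perp}$ for a distinguished $\widehat{\Gamma}$-orbit of lattice vectors, which defines $\Ha_{\infty}$.

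Second, for the extension across $\ADE$-singular fibers. If $\X\to\bD$ is a one-parameter smoothing of a cubic $X_{0}$ with only $\ADE$ singularities, then after simultaneous resolution the local monodromy acts through the Weyl group of the associated $\ADE$ root system and is therefore of finite order. Borel's extension theorem (equivalently, Schmid's nilpotent orbit theorem with trivial unipotent part) then yields a holomorphic extension $\Prd\colon\M_{1}\longrightarrow\widehat{\Gamma}\bs\widehat{\D}$, and the limit $\Prd(X_{0})$ automatically lies in $\widehat{\Gamma}\bs\Ha_{\Delta}$ since all vanishing classes are orthogonal to the limit $(3,1)$-line. This gives $\Prd(\M_{1})\subseteq\widehat{\Gamma}\bs(\widehat{\D}-\Ha_{\infty})$. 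For the reverse inclusion I would combine Theorem \ref{theorem: laza ADE stable} with the GIT stratification of $\overline{\M}$: $\overline{\M}\setminus\M_{1}$ consists of the chordal orbit together with non-$\ADE$ strata of higher codimension that are contracted by the GIT quotient, and by construction their periods are swept into $\Ha_{\infty}$; a dimension count plus the already-established isomorphism on $\M$ then forces surjectivity onto $\widehat{\Gamma}\bs(\widehat{\D}-\Ha_{\infty})$.

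Third, for the irreducibility claim, I would invoke Eichler's criterion: $\Aut(\Lambda_{0},\varphi)$ acts transitively on primitive vectors of fixed norm and fixed discriminant class, so norm-$(-2)$ vectors in $\Lambda_{0}$ form a single orbit, as do the distinguished vectors defining $\Ha_{\infty}$. It remains to check that the index-two subgroup $\widehat{\Gamma}$ cut out by the spinor-norm condition preserves transitivity, which follows from the existence of reflections in $(-2)$-vectors of $\Lambda_{0}$ with appropriate spinor norm to correct any parity issue.

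The main obstacle is the reverse inclusion in step two: showing that every period in $\widehat{\D}-\Ha_{\infty}$ is realized by a cubic with at worst $\ADE$ singularities, and simultaneously that the image of non-$\ADE$ degenerations lies entirely in $\Ha_{\infty}$. This is the technical heart of \cite{laza2010moduli} and requires both a fine GIT stratification of $\overline{\M}$ with explicit identification of the chordal cubic as the unique extra semistable orbit, and a careful limit mixed Hodge structure computation at the chordal limit to pin down the lattice vectors cutting out $\Ha_{\infty}$.
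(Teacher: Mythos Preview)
The paper does not prove this theorem at all: it is quoted verbatim as a result of Laza (\cite{laza2010moduli}), with no argument given. So there is no ``paper's own proof'' to compare against; the authors treat Theorem~\ref{theorem: laza extension} as a black box and only use its statement (together with Theorem~\ref{theorem: laza ADE stable} and the Looijenga--Laza identification $\overline{\M}\cong\overline{\widehat{\Gamma}\bs\widehat{\D}}^{\Ha_\infty}$) as input for their own results in Sections~\ref{section: local period map} and~\ref{section: compactification}.

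Your sketch is a reasonable high-level outline of what Laza actually does, but a few details are off. First, the relevant non-$\ADE$ semistable locus in $\overline{\M}$ is not a single orbit but a rational curve $\chi$ (see equation~\eqref{equation: chi} in the present paper), whose generic member is singular along a rational normal quartic and whose special point $X_{(1:0)}$ is the determinantal cubic (secant variety of the Veronese \emph{surface} in $\PP^5$, not of a rational normal quartic in $\PP^4$). Second, in the signature convention used here ($(21,2)$ on $\Lambda$), the vanishing class of a node has self-intersection $+2$, not $-2$; more to the point, the hyperplanes $\Ha_\Delta$ and $\Ha_\infty$ are usually described via the rank-two overlattices of $\langle\eta\rangle$ of discriminant $6$ and $2$ respectively (Hassett's $\mathcal{C}_6$ and $\mathcal{C}_2$), rather than directly via root vectors in $\Lambda_0$. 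Third, your ``reverse inclusion'' argument in step two is where the real work lies, as you acknowledge; Laza's actual proof proceeds by extending $\Prd$ to an isomorphism $\overline{\M}\cong\overline{\widehat{\Gamma}\bs\widehat{\D}}^{\Ha_\infty}$ via a comparison of stratifications on both sides, and then reading off the image on $\M_1$ from that global isomorphism, rather than by a direct dimension count.
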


\begin{rmk}
This characterization of the image $\Prd(\M)$ was conjectured by Hassett in \cite{hassett2000special}. Hassett defined the special cubic fourfolds, some of which correspond to polarized $K3$ surfaces. The hyperplane arrangements $\Ha_{\Delta}$ and $\Ha_{\infty}$ are two particular ones, parameterizing nodal cubic fourfolds and secent lines of determinantal cubic fourfold, and corresponding to $K3$ surfaces of degree 6 and 2 respectively. See \cite{hassett2000special}, section 4.2 and 4.4.
\end{rmk}

We have also the following marked-version of theorem \ref{theorem: laza extension}:
\begin{prop}
\label{proposition: image of local period map in general}
The local period map $\widetilde{\Prd}\colon \M^m\longrightarrow \widetilde{\D}$ has image $\widetilde{\D}-\Ha_{\infty}-\Ha_{\Delta}-\overline{\Ha_{\infty}}-\overline{\Ha_{\Delta}}$.
\end{prop}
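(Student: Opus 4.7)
The plan is to bootstrap proposition \ref{proposition: image of local period map in general} from the global result in theorem \ref{theorem: laza extension}. The two technical inputs I would use are the fact from proposition \ref{prop: strong global torelli for cubic fourfolds} that $\widetilde{\Prd}$ is an open embedding, and the tautological equivariance of $\widetilde{\Prd}$ under the natural action of $\Aut(\Lambda,\varphi,\eta)$ on both the source (via $\gamma\cdot(X,\Phi)=(X,\gamma\circ\Phi)$) and the target.

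First I would identify $\widetilde{\Prd}(\M^m)\cap\widehat{\D}$. Composing $\widetilde{\Prd}$ with the quotient $\widehat{\D}\longrightarrow \widehat{\Gamma}\bs\widehat{\D}$ agrees with composing the forgetful map $\M^m\longrightarrow \M$ with $\Prd$. Hence $\widetilde{\Prd}(\M^m)\cap\widehat{\D}$ is a $\widehat{\Gamma}$-invariant open subset of $\widehat{\D}$ whose $\widehat{\Gamma}$-quotient equals $\Prd(\M)=\widehat{\Gamma}\bs(\widehat{\D}-\Ha_\infty-\Ha_\Delta)$. Since $\Ha_\infty$ and $\Ha_\Delta$ are themselves $\widehat{\Gamma}$-invariant, this forces $\widetilde{\Prd}(\M^m)\cap\widehat{\D}=\widehat{\D}-\Ha_\infty-\Ha_\Delta$.

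Next I would transfer the description to the other connected component of $\widetilde{\D}$. By the remark following the definition of $\widehat{\Gamma}$, there exists $\gamma\in\Aut(\Lambda,\varphi,\eta)$ of spinor norm $-1$, and any such element swaps the two components of $\widetilde{\D}$. Equivariance together with the first step give
\[
\widetilde{\Prd}(\M^m)\cap\gamma\cdot\widehat{\D}=\gamma\cdot(\widehat{\D}-\Ha_\infty-\Ha_\Delta),
\]
which equals $\overline{\widehat{\D}}-\overline{\Ha_\infty}-\overline{\Ha_\Delta}$ by the very definition of the overlined arrangements as the images of $\Ha_\infty$, $\Ha_\Delta$ in the opposite component. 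Assembling the two components yields the claimed image.

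The point requiring some care is to verify that no further hyperplanes need to be removed when passing between components, i.e., that the $\Aut(\Lambda,\varphi,\eta)$-orbit of $\Ha_\infty\cup\Ha_\Delta\subset\widehat{\D}$ introduces nothing new inside $\widehat{\D}$ itself. This is automatic from the $\widehat{\Gamma}$-invariance of the two arrangements combined with the fact that $\widehat{\Gamma}$ has index $2$ in $\Aut(\Lambda,\varphi,\eta)$, so the full orbit is exactly $\Ha_\infty\cup\Ha_\Delta\cup\overline{\Ha_\infty}\cup\overline{\Ha_\Delta}$ with the new pieces sitting in the opposite component.
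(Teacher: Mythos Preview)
Your argument is correct and follows essentially the same strategy as the paper: both proofs deduce the result from theorem \ref{theorem: laza extension} together with the $\Aut(\Lambda,\varphi,\eta)$-equivariance of $\widetilde{\Prd}$. The only cosmetic difference is that the paper handles both components at once by invoking full $\Aut(\Lambda,\eta)$-invariance of $\widetilde{\Prd}(\M^m)$ directly, whereas you first pin down the image inside $\widehat{\D}$ via $\widehat{\Gamma}$-invariance and then transport to the other component with a spinor-norm $-1$ element; the open-embedding input you list is not actually needed for the logic to go through.
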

\begin{proof}
By theorem \ref{theorem: laza extension}, the image of $\widetilde{\Prd}$ lies in $\widetilde{\D}-\Ha_{\infty}-\Ha_{\Delta}-\overline{\Ha_{\infty}}-\overline{\Ha_{\Delta}}$. Take any point $x$ in $\widetilde{\D}-\Ha_{\infty}-\Ha_{\Delta}-\overline{\Ha_{\infty}}-\overline{\Ha_{\Delta}}$. By theorem \ref{theorem: laza extension} the point $[x]\in \widehat{\Gamma}\bs(\widehat{\D}-\Ha_{\infty}-\Ha_{\Delta})$ lies in the image of $\Prd\colon\M\longrightarrow{\widehat{\Gamma}}\bs\widehat{\D}$. Thus the orbit $\Aut(\Lambda,\eta) x$ intersects with $\widetilde{\Prd}(\M^m)$. Notice that the set $\widetilde{\Prd}(\M^m)$ is $\Aut(\Lambda,\eta)$-invariant, hence contains the orbit $\Aut(\Lambda,\eta) x$. We showed the surjectivity.
\end{proof}

For a specified type $T$, we have a natural embedding $\D\sqcup\overline{\D}\hookrightarrow \widetilde{\D}$. Denote $\Ha_s=\D\cap (\Ha_{\Delta}\cup\Ha_{\infty}\cup\overline{\Ha_{\Delta}}\cup\overline{\Ha_{\infty}})$ and $\Ha_{*}=\D\cap(\Ha_{\infty}\cup\overline{\Ha_{\infty}})$. The local period map $\widetilde{\Prd}\colon \F^m\longrightarrow \D\sqcup\overline{\D}$ has image contained in $\D\sqcup\overline{\D}-\Ha_s-\overline{\Ha_s}$.

\begin{prop}
\label{proposition: properties of local period map}
The local period map $\widetilde{\Prd}\colon \F^m\longrightarrow \D\sqcup\overline{\D}$ is an open embedding, with image either $\D-\Ha_s$ or $\D\sqcup\overline{\D}-\Ha_s-\overline{\Ha_s}$. In particular, $n^{\prime}=n$.
\end{prop}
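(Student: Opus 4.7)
The strategy is to realise the local period map for symmetric cubic fourfolds as a restriction of the marked period map $\widetilde{\Prd}\colon\M^m\to\widetilde{\D}$ of Proposition~\ref{prop: strong global torelli for cubic fourfolds}, and to then invoke the image description in Proposition~\ref{proposition: image of local period map in general}. First I would construct a natural forgetful morphism $\iota\colon\F^m\to\M^m$ sending the $G$-class of a T-marked fourfold to its $\PSL(V)$-class as a marked fourfold; this is well-defined because the equivalence on $\F^m$ is more restrictive than that on $\M^m$. To check $\iota$ is injective, suppose $(X_1,\Phi_1)$ and $(X_2,\Phi_2)$ in $\F^m$ become equivalent in $\M^m$ via some $g\in\PSL(V)$ with $\Phi_1=g^{*}\Phi_2$. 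Then $g^{*}$ intertwines the T-markings, so it commutes with the $A$-action $\sigma$ on $\Lambda$; consequently, for any $a\in\overline{A}$, the element $gag^{-1}a^{-1}\in\Aut(X_2)$ acts trivially on $H^4(X_2,\ZZ)$, and Lemma~\ref{lemma: isomotphism between automorphism groups} forces it to be the identity, so $g\in G$. Composing $\iota$ with the open embedding of Proposition~\ref{prop: strong global torelli for cubic fourfolds}, and observing that T-markings automatically send $H^{3,1}(X)$ into $\PP\Lambda_{\zeta}$, yields injectivity of $\widetilde{\Prd}\colon\F^m\to\D\sqcup\overline{\D}$.

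Next I would pin down the image. Proposition~\ref{proposition: image of local period map in general} gives $\widetilde{\Prd}(\M^m)=\widetilde{\D}-\Ha_{\infty}-\Ha_{\Delta}-\overline{\Ha_{\infty}}-\overline{\Ha_{\Delta}}$; intersecting with $\D\sqcup\overline{\D}$ yields $(\D\sqcup\overline{\D})-\Ha_s-\overline{\Ha_s}$ by definition of $\Ha_s$. Hence $\widetilde{\Prd}(\F^m)$ is contained in this set. For the reverse inclusion, given such a point $p$ I would take the corresponding $(X,\Phi)\in\M^m$. Because $p\in\PP\Lambda_{\zeta}$, pulling $\sigma$ back along $\Phi$ gives an integral action of $A$ on $H^{4}(X,\ZZ)$ preserving $\varphi_X$, $\eta_X$ and $H^{3,1}(X)$; by Lemma~\ref{lemma: isomotphism between automorphism groups} this comes from a biregular action of $\overline{A}$ on $X\subset\PP V$, which by Theorem~\ref{theorem: finiteness of automorphism groups of smooth} is automatically linear. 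Lifting this $\overline{A}\subset\PSL(V)$ to an $A\subset\SL(V)$ and possibly adjusting the lift by a character $\overline{A}\to\mu_{6}$ produces a defining polynomial lying in $\V=\V_{\lambda}$, exhibiting $(X,\Phi)$ as an element of $\F^m$ with period $p$.

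Finally, combining injectivity with surjectivity onto an open subset of $\D\sqcup\overline{\D}$ forces the holomorphic map $\widetilde{\Prd}$ to be an open embedding, and in particular gives $n=n'$. The dichotomy in the statement reflects whether complex conjugation on periods can be realised within $\F^m$ by a change of T-marking: when it can, both components of $\D\sqcup\overline{\D}$ are hit and the image is $\D\sqcup\overline{\D}-\Ha_s-\overline{\Ha_s}$; otherwise only the preferred component is reached and the image is $\D-\Ha_s$. The main obstacle I foresee is the character-matching step at the end of the surjectivity argument, namely showing that the abstract $\overline{A}$-action on $X$ produced by Lemma~\ref{lemma: isomotphism between automorphism groups} is actually of type $T$; this requires tracking the $\mu_{6}$-ambiguity in lifting $\overline{A}\subset\PSL(V)$ to $A\subset\SL(V)$ and seeing that $\lambda$ is determined by $\sigma$ up to exactly this ambiguity, so that the lift can be chosen to realise the prescribed character on the defining polynomial.
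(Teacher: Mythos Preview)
Your injectivity argument via the forgetful map $\F^m\to\M^m$ and Lemma~\ref{lemma: isomotphism between automorphism groups} is essentially the paper's. The gap is in the surjectivity step, and it is more serious than the ``character-matching'' issue you flag.

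Given $p\in(\D\sqcup\overline{\D})-\Ha_s-\overline{\Ha_s}$ and the corresponding $(X,\Phi)\in\M^m$, pulling back $\sigma$ along $\Phi$ and applying Lemma~\ref{lemma: isomotphism between automorphism groups} indeed produces a faithful homomorphism $\overline{A}\to\Aut(X)\subset\PSL(V)$. But what you obtain is merely \emph{some} linear $\overline{A}$-action on $X$ whose induced action on $H^4(X,\ZZ)$ agrees with $\sigma$; you have not shown that the image of $\overline{A}$ in $\PSL(V)$ is conjugate to the original one, i.e.\ that the symmetry type is $T$ rather than some other $T'$ with the same abstract group and the same cohomological representation. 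Adjusting the lift by a character $\overline{A}\to\mu_6$ only moves you among pairs $(A,\lambda)$ sitting over a \emph{fixed} subgroup $\overline{A}\subset\PSL(V)$; it cannot change the $\PSL(V)$-conjugacy class of that subgroup. So your proposed fix does not close the gap. (Note also that if your pointwise argument worked as stated, it would force the image to be all of $(\D\sqcup\overline{\D})-\Ha_s-\overline{\Ha_s}$, contradicting the dichotomy you invoke afterwards.)

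The paper avoids this difficulty by a connectedness argument. Starting from a known $(X,\Phi)\in\F^m$ with period $x$, one joins $x$ to the target $y$ by a path $\gamma$ inside $\D-\Ha_s$ (or $\overline{\D}-\overline{\Ha_s}$), lifts $\gamma$ to a family of marked cubic fourfolds via Proposition~\ref{prop: strong global torelli for cubic fourfolds}, and then uses Lemma~\ref{lemma: isomotphism between automorphism groups} fibrewise to produce $\overline{A}$-actions on each fibre with compatible cohomological action. The key step you are missing is Lemma~\ref{lemma: glue automorphisms}: these fibrewise actions glue to a single action on the family, so the embedding $\overline{A}\hookrightarrow\PSL(V)$ varies continuously and hence stays in the conjugacy class it started in, namely that of $T$. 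This is what guarantees that the endpoint lies in $\F^m$, and it immediately yields the dichotomy on components.
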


\begin{proof}
We have a closed embedding $\pi\colon \D\sqcup \overline{\D}\hookrightarrow \widetilde{\D}$. There is a natural map $j\colon \F^m\longrightarrow \M^m$. Suppose $(X_1,\Phi_1),(X_2,\Phi_2)$ represent the same point in $\M^m$, then there exists a linear isomorphism $g\colon X_1\cong X_2$ such that
\begin{equation*}
g^*=\Phi_1^{-1}\circ \Phi_2\colon H^4(X_2,\ZZ)\longrightarrow H^4(X_1,\ZZ)
\end{equation*}
Since $\Phi_1,\Phi_2$ are compatible with the action of $A$ on $H^4(X_1,\ZZ)$, $H^4(X_2,\ZZ)$, so is $g^*$. Lemma \ref{lemma: isomotphism between automorphism groups} implies that $g$ is compatible with the actions of $A$ on $X_1,X_2$. Thus $(X_1,\Phi_1),(X_2,\Phi_2)$ represent the same point in $\F^m$. We showed the injectivity of $j$.

Combining with the following commutative diagram:
\begin{equation*}
\begin{tikzcd}
\F^m \arrow{r}{\widetilde{\Prd}}\arrow[hook]{d}{j} &\D\sqcup\overline{\D}\arrow[hook]{d}{\pi}\\
\M^m \arrow[hook]{r}{\widetilde{\Prd}} &\widetilde{\D}
\end{tikzcd}
\end{equation*}
we obtain the injectivity of $\widetilde{\Prd}\colon \F^m\longrightarrow \D\sqcup\overline{\D}$. In particular, $n\le n^{\prime}$.

Since the differential of $\widetilde{\Prd}\colon \M^m\longrightarrow \widetilde{\D}$ is injective everywhere, so is the differential of  $\widetilde{\Prd}\colon \F^m\longrightarrow \D\sqcup\overline{\D}$.

Take $(X,\Phi)\in \F^m$. Let $x=\Phi(H^{3,1}(X))\in \D\sqcup\overline{\D}$ and $y$ be any point in the component of $\D\sqcup\overline{\D}$ containing $x$. Since both $\D-\Ha_s$ and $\overline{\D}-\overline{\Ha_s}$ are connected, there exists a path
\begin{equation*}
\gamma\colon [0,1]\longrightarrow \D\sqcup\overline{\D}-\Ha_{s}-\overline{\Ha_{s}}
\end{equation*}
with $\gamma(0)=x$ and $\gamma(1)=y$. The path $\gamma$ has a unique lifting in $\M^m$. By proposition \ref{prop: strong global torelli for cubic fourfolds}, we can choose a family $\X\longrightarrow [0,1]$ of cubic fourfolds, with marking $\Phi$ of every fiber, such that $(\X_0,\Phi)=(X,\Phi)$ and $\Phi(H^{3,1}(\X_s))=\gamma(s)$, for all $s\in [0,1]$. Since $\gamma(s)\in\D\sqcup\overline{\D}$, the Hodge structure on $H^4(X_s,\ZZ)$ has action of $A$ induced by $\Phi$. By lemma \ref{lemma: isomotphism between automorphism groups}, there exists an action of $A$ on $\X_s$ for any $s\in [0,1]$, inducing compatible action on $H^4(\X_s,\ZZ)$. By lemma \ref{lemma: glue automorphisms}, actions of $A$ are of the same type $T$. Thus we obtain a lifting of $\gamma$ in $\F^m$, hence $y\in \widetilde{\Prd}(\F^m)$.

If $\widetilde{\Prd}(\F^m)\subset \D$, then $\widetilde{\Prd}(\F^m)=\D-\Ha_s$; otherwise $\widetilde{\Prd}(\F^m)$ intersects with both $\D$ and $\overline{\D}$, which implies that $\widetilde{\Prd}(\F^m)=\D\sqcup\overline{\D}-\Ha_s-\overline{\Ha}_s$.
\end{proof}

We introduce an involution on $\M^m$. Take any smooth cubic fourfold $X=Z(F)$, and a marking $\Phi\colon H^4(X,\ZZ)\longrightarrow \Lambda$. Let $X^{\prime}=Z(\overline{F})$. There exists a homeomorphism $\tau$ from $X$ to $X^{\prime}$ given by complex conjugation. Let $\iota$ be the involution on $\M^m$ sending $(X,\Phi)$ to $(X^{\prime}, \Phi\circ \tau^*)$. Consider smooth cubic fourfold $X=Z(F)$ such that $F$ has real coefficients. Then $\tau$ is a diffeomorphism of $X$, and $\tau^*$ sends $H^{3,1}(X)$ to $H^{1,3}(X)$. Therefore, choosing any marking $\Phi$ of $X$, the points $[(X,\Phi)]$ and $[(X,\Phi\circ \tau^*)]$ lie in different components of $\M^m$. This implies that the involution $\iota$ exchanges the two components of $\M^m$.

Next we give criterions on number of connected components of $\F^m$. For a symmetry type $T=[(A,\lambda)]$, we define the complex conjugate $\overline{T}$ of $T$ to be $[(\widetilde{A},\widetilde{\lambda})]$, where $\widetilde{A}$ is the complex conjugate of $A$, and $\widetilde{\lambda}(a)=\lambda(\overline{a})$ for all $a\in \widetilde{A}$. From definition, the involution $\iota$ exchanges the two spaces $\F_T^m$ and $\F_{\overline{T}}^m$.
\begin{prop}
\label{proposition: one or two components}
Given a symmetry type $T=[(A,\lambda)]$.
\begin{enumerate}[(i)]
\item If $\zeta$ is not real, then $\F^m$ is connected.
\item If $T=\overline{T}$, then $\F^m$ has two components.
\item If $T$ satisfies condition \ref{condition: smooth and identification of groups}, and $T\ne\overline{T}$, then $\F^m$ is connected.
\end{enumerate}
\end{prop}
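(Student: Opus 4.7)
The strategy is to exploit the image characterization in Proposition~\ref{proposition: properties of local period map}: since $\widetilde{\Prd}\colon \F^m \hookrightarrow \D\sqcup\overline{\D}$ is an open embedding with image either $\D-\Ha_s$ or $(\D\sqcup\overline{\D})-\Ha_s-\overline{\Ha_s}$, and since the complement of a (complex codimension one) hyperplane arrangement in a connected complex manifold remains connected, $\F^m$ has either one or two components, according to whether the image meets only $\D$ or also $\overline{\D}$. The task is then to decide, in each of the three cases, which alternative holds.

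For (i), when $\zeta\ne\overline{\zeta}$ the eigenspaces $\Lambda_\zeta$ and $\Lambda_{\overline{\zeta}}$ are distinct subspaces of $(\Lambda_0)_\CC$, and by construction $\D\subset\PP\Lambda_\zeta$ while $\overline{\D}\subset\PP\Lambda_{\overline{\zeta}}$. Any T-marking $\Phi$ is $A$-equivariant, so it maps $H^{3,1}(X)\subset H^4(X)_\zeta$ into $\Lambda_\zeta$; hence the period lies in $\D$ and the image cannot meet $\overline{\D}$, giving connectedness.

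For (ii), fix $g\in\SL(V)$ realizing the equivalence between $(A,\lambda)$ and $(\widetilde{A},\widetilde{\lambda})$; composing the involution $\iota$ on $\M^m$ with the induced identification $\F^m_{\overline{T}}\cong\F^m_T$ produces an involution of $\F^m_T$. Under $\widetilde{\Prd}$ this involution corresponds to complex conjugation on $\widetilde{\D}$, which swaps $\D$ and $\overline{\D}$. Since by the convention on $\D$ the image of $\F^m_T$ is nonempty in $\D$, applying the involution produces a point of $\F^m_T$ whose period lies in $\overline{\D}$; hence the image meets both components and $\F^m_T$ has exactly two connected components.

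For (iii), assume for contradiction that the image meets $\overline{\D}$; then it equals $(\D\sqcup\overline{\D})-\Ha_s-\overline{\Ha_s}$, and applying $\iota$ the image of $\F^m_{\overline{T}}$ covers the complex conjugate set, which again includes a dense open of $\D$. Choose a generic period $p\in\D-\Ha_s$ that is simultaneously in the image of $\F^m_T$ and of $\F^m_{\overline{T}}$; let $(X,\Phi)\in\F^m_T$ and $(X',\Phi')\in\F^m_{\overline{T}}$ both map to $p$. By the injectivity of $\widetilde{\Prd}\colon\M^m\hookrightarrow\widetilde{\D}$ from Proposition~\ref{prop: strong global torelli for cubic fourfolds}, the two pairs coincide in $\M^m$, so the underlying cubic fourfold lies simultaneously in $\V^{sm}_T$ and (up to $\SL(V)$-action) in $\V^{sm}_{\overline{T}}$. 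By the genericity of $p$ and Condition~\ref{condition: smooth and identification of groups}, its automorphism group equals $\overline{A}$; both the $A$-action and the $\widetilde{A}$-action then exhaust $\overline{A}$, and the characters $\lambda$ and $\widetilde{\lambda}$ describe the action of this same group on the same defining polynomial. This forces $(A,\lambda)\sim(\widetilde{A},\widetilde{\lambda})$, contradicting $T\ne\overline{T}$. Hence the image remains in $\D$ and $\F^m$ is connected.

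The main obstacle is part (iii): one must carefully extract from the coincidence $(X,\Phi)=(X',\Phi')$ in $\M^m$ and the minimality of $\overline{A}$ as the full automorphism group of a generic $X$ that the two symmetry data $(A,\lambda)$ and $(\widetilde{A},\widetilde{\lambda})$ are $\SL(V)$-equivalent. Condition~\ref{condition: smooth and identification of groups} is indispensable here; without it, a single generic $X$ could a priori carry several inequivalent symmetry types simultaneously, and the argument would fail.
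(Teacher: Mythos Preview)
Your proposal is correct and follows essentially the same approach as the paper: all three parts rest on the dichotomy of Proposition~\ref{proposition: properties of local period map} together with the involution $\iota$ that swaps $\F_T^m$ and $\F_{\overline T}^m$ inside $\M^m$. Your part~(iii) in fact unpacks what the paper leaves as a one-line assertion (``$\F_{\overline T}^m=\F_T^m$ cannot happen under Condition~\ref{condition: smooth and identification of groups} with $T\neq\overline T$''): you pick a generic period point, invoke injectivity of $\widetilde\Prd$ on $\M^m$ to identify the underlying marked fourfolds, and then use $\Aut(X)=\overline A$ to force the two data to be $\SL(V)$-equivalent. One small imprecision in your~(ii): the composite of $\iota$ with the identification via $g$ need not literally correspond to complex conjugation on $\widetilde\D$ (the $g$-part is an isomorphism of marked fourfolds, hence the identity on $\M^m$), but what you actually need---that the resulting self-map of $\F_T^m$ swaps the two components of $\M^m$---follows from $\iota$ alone, so the conclusion stands.
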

\begin{proof}
Suppose $\zeta$ is not real, then $\widetilde{\Prd}(\F^m)$ lies in the ball associated to $(\Lambda_{\zeta},h)$. Thus $\F^m$ is connected.

Suppose $T=\overline{T}$, then $\F^m$ is preserved by $\iota$. Thus $\F^m$ has two components.

Suppose $\F^m$ has two components, then $\widetilde{\Prd}(\F^m)=\D\sqcup \overline{\D}-\Ha_s-\overline{\Ha_s}$. Thus $\F^m$ is preserved by $\iota$. Thus $\F_{\overline{T}}^m=\F_T^m$. This can not happen if $T$ satisfies condition \ref{condition: smooth and identification of groups} and $T\ne \overline{T}$. The third part follows.
\end{proof}

\subsection{Global Period Map}
\label{subsection: global period map}
In this section we are going to define the global period domain for symmetric cubic fourfolds of type $T$ as an arithmetic quotient of $\D$, and study the global period map.

Let $(d,k)=(3,4)$ and fix a symmetry type $T=[(A,\lambda)]$ satisfying condition \ref{condition: smooth}. Let $\Gamma=\{\rho\in \widehat{\Gamma}\big{|}\rho \overline{A}\rho^{-1}=\overline{A}\}$ be the normalizer of $\overline{A}$ in $\widehat{\Gamma}$. Take $\rho\in \widehat{\Gamma}$ and a point $x\in \Lambda_{\zeta}$. We claim that $\rho x\in \Lambda_{\zeta}$. In fact, take any $a\in A$, we have
\begin{equation*}
a\rho x=\rho \rho^{-1}a\rho x=\rho\zeta(\rho^{-1}a\rho)x=\zeta(\rho^{-1}a\rho)\rho x.
\end{equation*}
Since $\rho\in\widehat{\Gamma}$, we have $\rho [x]\in \widehat{\D}$. The two characters $\zeta$ and $\rho^{-1} \zeta\rho$ both give non-definite eigensubspaces of $\Lambda_{\CC}$. We conclude that $\zeta=\rho^{-1}\zeta\rho$, hence $\rho x\in \Lambda_{\zeta}$. This gives a natural action of $\Gamma$ on $\D$.

Let $N_A$ be the normalizer of $A$ in $\Aut((\Lambda_0)_{\QQ},\varphi)$, which is a reductive algebraic subgroup. The group $\Gamma$ is an arithmetic subgroup of $N_A$, see also appendix. The arithmetic quotient $\Gamma\bs \D$ is a quasi-projective variety thanks to the Baily-Borel compactification (see section \ref{subsection: functoriality of baily-borel compactification} in appendix). We denote $(\F^m)^1$ to be the connected component of $\F^m$ such that $\widetilde{\Prd}((\F^m)^1)=\D-\Ha_s$.
\begin{prop}
\label{proposition: global period map as isomorphism}
The local period map $\widetilde{\Prd}\colon (\F^m)^1\longrightarrow \D-\Ha_s$ descends to an algebraic isomorphism $\Prd\colon \F\cong \Gamma\bs (\D-\Ha_s)$.
\end{prop}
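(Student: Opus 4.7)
The plan is to identify $\F$ with the $\Gamma$-quotient of $(\F^m)^1$ and then transport the open embedding $\widetilde{\Prd}\colon (\F^m)^1 \cong \D - \Ha_s$ of proposition \ref{proposition: properties of local period map} across this quotient.

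First, I would verify that $\Gamma$ acts on $(\F^m)^1$ by the rule $\rho \cdot (X,\Phi) = (X, \rho\circ \Phi)$ and that $\widetilde{\Prd}$ is $\Gamma$-equivariant. That $\rho\circ \Phi$ is again a T-marking reduces to three checks: (i) $\rho$ fixes $\eta$; (ii) $\rho$ normalizes $\overline{A}$ and is compatible with $\sigma$ up to the inner automorphism $\mathrm{ad}(\rho)$ of $A$; and (iii) $\rho\Lambda_\zeta = \Lambda_\zeta$, which is the calculation already carried out in section \ref{subsection: global period map}. Since $\Gamma\subset \widehat{\Gamma}$ preserves the component $\widehat{\D}\supset \D$ and $\widetilde{\Prd}$ is injective, the action preserves the component $(\F^m)^1$. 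The $\Gamma$-equivariance of $\widetilde{\Prd}$ is immediate from the definition.

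Next, I would show that the forgetful map $f\colon (\F^m)^1 \to \F$ is $\Gamma$-invariant and induces a bijection $\Gamma\bs (\F^m)^1 \xrightarrow{\sim} \F$. Surjectivity is the assertion that every point of $\F$ admits a T-marking with period in $\D$ rather than in $\overline{\D}$, which follows from the connectedness argument already used in the proof of proposition \ref{proposition: properties of local period map}. For injectivity, suppose $(X_1,\Phi_1),(X_2,\Phi_2)\in (\F^m)^1$ satisfy $f(X_1,\Phi_1)=f(X_2,\Phi_2)$. Then some element of $N$ descends to a linear isomorphism $g\colon X_1 \to X_2$, and the composite $\rho := \Phi_1 \circ g^* \circ \Phi_2^{-1}$ is an element of $\Aut(\Lambda,\eta)$ that normalizes $\overline{A}$. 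Since both marked periods lie in the single component $\widehat{\D}$, the map $\rho$ preserves $\widehat{\D}$, so $\rho\in \widehat{\Gamma}$ and therefore $\rho\in \Gamma$; by construction $\rho\cdot (X_2,\Phi_2)$ is equivalent to $(X_1,\Phi_1)$ in $\F^m$.

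Combining these two steps with proposition \ref{proposition: properties of local period map} produces a bijection $\Prd\colon \F \xrightarrow{\sim} \Gamma\bs (\D - \Ha_s)$ fitting into a commutative square with $\widetilde{\Prd}$ and the two $\Gamma$-quotient maps; local biholomorphy on both sides is inherited from the upstairs isomorphism. For algebraicity, the target is quasi-projective via the Baily-Borel compactification recalled in section \ref{subsection: functoriality of baily-borel compactification}, and $\F$ is quasi-projective as a GIT quotient, so the biholomorphism between normal quasi-projective varieties is algebraic by GAGA (or by Borel's extension theorem).

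The main obstacle I anticipate lies in the bookkeeping for Step 2, specifically in confirming that the comparison automorphism $\rho$ lands in the arithmetic subgroup $\Gamma\subset \Aut(\Lambda,\eta)$ rather than in the full group. Concretely this is the spinor-norm check characterizing $\widehat{\Gamma}$, which must be deduced from the hypothesis that both pairs live in the same component $(\F^m)^1$, together with the compatibility of the ambient $A$-action on different cubics of type $T$ so that $g^*$ genuinely intertwines $\sigma_{X_2}$ with $\sigma_{X_1}$.
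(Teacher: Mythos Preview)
Your overall plan---pass from the local isomorphism of proposition~\ref{proposition: properties of local period map} to the quotient---is the same as the paper's, but Step~1 as you formulate it does not work. For $\rho\in\Gamma$ the composite $\rho\circ\Phi$ is in general \emph{not} a $T$-marking: the defining condition $\Phi\circ\sigma_X(a)=\sigma(a)\circ\Phi$ becomes $(\rho\Phi)\circ\sigma_X(a)=\sigma(\phi_\rho(a))\circ(\rho\Phi)$, where $\phi_\rho$ is the automorphism of $\overline A$ given by conjugation by $\rho$. You flagged this (``compatible with $\sigma$ up to the inner automorphism $\mathrm{ad}(\rho)$'') but did not resolve it; since $\Gamma$ is the \emph{normalizer} of $\overline A$ in $\widehat\Gamma$, not the centralizer, the rule $\rho\cdot(X,\Phi)=(X,\rho\Phi)$ does not define an action on $\F^m$, and so the forgetful map $f$ being ``$\Gamma$-invariant'' is not a statement you can invoke for free.

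This leaves one direction of the bijection unproved. Your Step~2 shows that fibers of $f\colon(\F^m)^1\to\F$ are contained in $\Gamma$-orbits on the period side (from $g\in N$ one gets $\rho=\Phi_1 g^*\Phi_2^{-1}\in\Gamma$); this is exactly the paper's well-definedness check. The converse---that $\Gamma$-equivalent periods come from $N$-equivalent cubics---is what your Step~1 was meant to supply, and it requires real input: starting from $\rho\in\Gamma$, one must produce $g\in N$. The paper handles this in its injectivity argument by invoking the strong Torelli statement of lemma~\ref{lemma: isomotphism between automorphism groups} to obtain $g\in\SL(V)$ with $g^*=\Phi_2^{-1}\rho\Phi_1$, and then verifying $g^{-1}ag\in A$ and $\lambda(g^{-1}ag)=\lambda(a)$ using that $\rho$ normalizes the image of $A$. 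Once you insert this step your argument coincides with the paper's; your remarks on the spinor-norm check and on algebraicity via Borel extension (or via theorem~\ref{theorem: two normalizations}) are in line with what the paper does.
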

\begin{proof}
There are natural analytic morphisms from $\F^m$ to $\F$, and $\D-\Ha_s$ to $\Gamma\bs (\D-\Ha_s)$ respectively. We define the global period map $\Prd\colon \F\longrightarrow \Gamma\bs(\D-\Ha_s)$ as follows. Take $F\in \V^{sm}$. We choose a $T$-marking $\Phi$ of $X=Z(F)$, such that $\Phi(H^{3,1}(X))\in \D$ (this also means that $(F,\Phi)\in (\F^m)^1$). We define
\begin{equation*}
\Prd([F])=[\widetilde{\Prd}(X,\Phi)].
\end{equation*}

We show this map is well-defined. Take $F_1,F_2\in\V^{sm}$ with $T$-markings $\Phi_1,\Phi_2$ respectively. Suppose there exists $g\in N$, such that $g(F_1)=F_2$. We have an induced map
\begin{equation*}
g^*\colon H^4(Z(F_2),\ZZ)\longrightarrow H^4(Z(F_1),\ZZ).
\end{equation*}
Next we show $\rho=\Phi_1 g^* \Phi_2^{-1}\in\Gamma$. Denote $a^{\prime}=gag^{-1}$. Since $g\in N$, we have $a^{\prime}\in A$. we have the following commutative diagram:
\begin{equation*}
\begin{tikzcd}
\Lambda \arrow{r}{\Phi_2^{-1}}\arrow{d}{a^{\prime}} & H^4(Z(F_2),\ZZ)\arrow{r}{g^*}\arrow{d}{a^{\prime *}} &H^4(Z(F_1),\ZZ)\arrow{r}{\Phi_1}\arrow{d}{a^*}&\Lambda\arrow{d}{a}\\
\Lambda \arrow{r}{\Phi_2^{-1}} & H^4(Z(F_2),\ZZ)\arrow{r}{g^*} &H^4(Z(F_1),\ZZ)\arrow{r}{\Phi_1}&\Lambda
\end{tikzcd}
\end{equation*}
This implies that, as automorphisms of $\Lambda$, $a^{\prime}=\rho^{-1}a\rho$. Thus $\rho\in \Gamma$. We then have a well-defined analytic morphism $\Prd\colon \F\longrightarrow \Gamma\bs (\D-\Ha_s)$.

By definition we have the following commutative diagram:
\begin{equation}
\label{diagram: local and global period maps}
\begin{tikzcd}
(\F^m)^1\arrow{r}{\widetilde{\Prd}}\arrow{d}{j} &\D-\Ha_s \arrow{d}{\pi} \\
\F\arrow{r}{\Prd}  &\Gamma\bs(\D-\Ha_s).
\end{tikzcd}
\end{equation}
We next show $\Prd\colon \F\longrightarrow \Gamma\bs(\D-\Ha_s)$ is an isomorphism.

We first show injectivity. Suppose $(F_1,\Phi_1),(F_2,\Phi_2)\in \F^m$, with $\Phi_1(H^{3,1}(Z(F_1)))$ and $\Phi_2(H^{3,1}(Z(F_2)))$ representing the same point in $\Gamma\bs(\D-\Ha_s)$. Then there exists $\rho\in \Gamma$, such that $\rho\Phi_1(H^{3,1}(Z(F_1)))=\Phi_2(H^{3,1}(Z(F_2)))$. The map \begin{equation*}
\Phi_2^{-1}\rho\Phi_1\colon H^4(Z(F_1),\ZZ)\longrightarrow H^4(Z(F_2),\ZZ)
\end{equation*}
preserves the polarized Hodge structures. By lemma \ref{lemma: isomotphism between automorphism groups}, we have $g\in \SL(V)$, with $gF_2$ equals to $F_1$ after rescaling of $F_2$, and $g^*=\Phi_2^{-1}\rho\Phi_1$. For any $a\in A$, we have $a^*\colon H^4(Z(F_1),\ZZ)\longrightarrow H^4(Z(F_1),\ZZ)$. We have $g^{-1}ag$ acting on $Z(F_2)$, which induces:
\begin{equation*}
(g^{-1}ag)^*=g^*a^*g^{*-1}=(\Phi_2^{-1}\rho\Phi_1)(\Phi_1^{-1}a\Phi_1)(\Phi_1^{-1}\rho^{-1}\Phi_2)=\Phi_2^{-1}\rho a\rho^{-1}\Phi_2
\end{equation*}
Since $\rho\in \Gamma$, we have $\rho a\rho^{-1}\in A$. Again by lemma \ref{lemma: isomotphism between automorphism groups}, we have $g^{-1}ag\in A$. Since
\begin{equation*}
g^{-1}agF_2=g^{-1}aF_1=\lambda(a)g^{-1}F_1=\lambda(a)F_2,
\end{equation*}
we have $\lambda(g^{-1}ag)=\lambda(a)$. We conclude $g\in N$. Thus $\Prd$ is injective.

By proposition \ref{proposition: properties of local period map}, the composition of
\begin{equation*}
(\F^m)^1\longrightarrow \D-\Ha_s\longrightarrow \Gamma\bs(\D-\Ha_s)
\end{equation*}
is surjective. By commutativity of diagram \eqref{diagram: local and global period maps}, the composition of
\begin{equation*}
(\F^m)^1\longrightarrow \F\longrightarrow \Gamma\bs(\D-\Ha_s)
\end{equation*}
is also surjective, hence $\Prd\colon \F\longrightarrow \Gamma\bs(\D-\Ha_s)$ is surjective.

The algebraicity of $\Prd$ can be deduced from its extension to certain compactifications on both sides, see theorem \ref{theorem: two normalizations}. An alternative argument follows the proof of proposition 2.2.3 in \cite{hassett2000special} using Baily-Borel compactification and Borel extension theorem.
\end{proof}
\section{Compactifications}
\label{section: compactification}
In this section we are going to study the compactifications of both two sides of $\Prd\colon \F\longrightarrow \Gamma\bs(\D-\Ha_s)$. The essential ingredient is the identification of the $\GIT$-compactification of the moduli space of cubic fourfolds and the Looijenga compactification of the global period domain, proved by Looijenga \cite{looijenga2009period} and Laza \cite{laza2010moduli} independently. Depending on this, we will prove theorem \ref{theorem: main2}, and then deduce $(iii)$ of theorem \ref{theorem: main1}. In theorem \ref{theorem: criterion for bb}, we give a criterion when the Looijenga compactification is actually Baily-Borel compactification.

Let $(d,k)=(3,4)$. Recall that from theorem \ref{theorem: laza extension} we have isomorphism $\Prd\colon \M_1\cong \widehat{\Gamma}\bs (\widehat{\D}-\Ha_{\infty})$. From \cite{looijenga2009period} and \cite{laza2010moduli} we have:

\begin{thm}[Looijenga, Laza]
The period map $\Prd$ extends to $\Prd\colon \overline{\M}\longrightarrow \overline{\widehat{\Gamma}\bs \widehat{\D}}^{\Ha_{\infty}}$.
\end{thm}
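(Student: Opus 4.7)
The plan is to extend the isomorphism $\Prd\colon\M_1\cong\widehat{\Gamma}\bs(\widehat{\D}-\Ha_\infty)$ from Theorem \ref{theorem: laza extension} to a morphism on the $\GIT$ compactification $\overline{\M}$ landing in the Looijenga compactification $\overline{\widehat{\Gamma}\bs\widehat{\D}}^{\Ha_\infty}$, and indeed to identify the two compactifications.

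First I would assemble the boundary data on each side. The $\GIT$-boundary $\overline{\M}\setminus\M_1$ parameterizes strictly semistable cubic fourfolds with worse-than-ADE singularities, and Laza's classification in \cite{laza2009moduli} shows that these reduce to finitely many explicit minimal closed orbits. On the Hodge side, the Looijenga compactification is, by construction, obtained from the Baily--Borel compactification of $\widehat{\Gamma}\bs\widehat{\D}$ by contracting the closure of $\widehat{\Gamma}\bs\Ha_\infty$ together with its intersection strata at the cusps, so that the semi-ample bundle $\lambda - c\,\Ha_\infty$ (for an appropriate rational number $c$) becomes ample.

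Second, I would produce the extension by identifying polarizations: the natural $\GIT$-polarization on $\overline{\M}$ should correspond, under $\Prd$ restricted to the open part, to a positive multiple of the Looijenga line bundle on $\widehat{\Gamma}\bs(\widehat{\D}-\Ha_\infty)$. Once this matching is in place, the two graded rings of sections agree up to finite index, and taking $\Proj$ yields both the extension $\overline{\M}\to\overline{\widehat{\Gamma}\bs\widehat{\D}}^{\Ha_\infty}$ and its being an isomorphism of normal projective varieties. An alternative route first invokes Borel's extension theorem to extend $\Prd$ to a morphism $\overline{\M}\to(\widehat{\Gamma}\bs\widehat{\D})^{\mathrm{BB}}$ into the Baily--Borel compactification, then composes with the canonical birational contraction to the Looijenga compactification, verifying that the image avoids the indeterminacy locus.

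The main obstacle is establishing a stratum-by-stratum dictionary at the boundary: one must verify that each closed strictly-semistable orbit of cubic fourfolds is sent to the expected stratum of $\overline{\widehat{\Gamma}\bs\widehat{\D}}^{\Ha_\infty}$. This requires computing limit mixed Hodge structures along one-parameter smoothings of each of Laza's classified minimal orbits and matching them against the cusps of the Baily--Borel compactification and against the contracted image of $\widehat{\Gamma}\bs\Ha_\infty$. The most delicate case is the determinantal cubic fourfold, whose nilpotent orbit controls the geometry of $\Ha_\infty$. Once this dictionary is set up, Zariski's main theorem combined with the identification of ample line bundles yields the stated extension as a morphism (and, in fact, an isomorphism).
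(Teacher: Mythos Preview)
The paper does not prove this theorem; it is stated as a cited result from \cite{looijenga2009period} and \cite{laza2010moduli} (introduced by ``From \cite{looijenga2009period} and \cite{laza2010moduli} we have:''). So there is no proof in the paper to compare your proposal against.

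That said, your sketch is a fair high-level approximation of how the original arguments go. Looijenga's approach in \cite{looijenga2009period} is closest to your ``identify polarizations and take $\Proj$'' strategy: he matches the graded ring of $\SL(6)$-invariants with a ring of meromorphic automorphic forms with poles along $\Ha_\infty$, and the isomorphism of compactifications drops out of the isomorphism of graded rings. Laza's approach in \cite{laza2010moduli} is closer to your ``stratum-by-stratum dictionary'': he uses his explicit $\GIT$ classification from \cite{laza2009moduli} together with degeneration/limit Hodge structure computations to match the $\GIT$ boundary with the Looijenga boundary, with the determinantal locus and the curve $\chi$ (the one appearing later in this paper around equation \eqref{equation: chi}) playing the role you anticipate. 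Your alternative via Borel extension to Baily--Borel followed by a birational map to the Looijenga model is not quite right as stated: the Looijenga compactification is a blow-up-then-blow-down of Baily--Borel, so there is no morphism in that direction to compose with; one needs either the graded-ring identification or the explicit boundary matching to get the map.
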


Recall that $\Ha_*=\D\cap (\Ha_{\infty}\cup\overline{\Ha}_{\infty})$, which is a $\Gamma$-invariant hyperplane arrangement in $\D$. We have a morphism between locally symmetric varieties
\begin{equation*}
\Gamma\bs\D\longrightarrow \Aut(\Lambda,\eta)\bs\widetilde{\D}\cong \widehat{\Gamma}\bs\widehat{\D}.
\end{equation*}
We can construct the Looijenga compactification $\overline{\Gamma\bs \D}^{\Ha_*}$ of $\Gamma\bs(\D-\Ha_*)$ (see appendix \ref{section: appendix}). From theorem \ref{theorem: normalization looijenga compactification}, we have:
\begin{prop}
\label{prop: arithmetic quotient side normalization}
There exists finite morphism $\pi\colon \overline{\Gamma\bs\D}^{\Ha_*}\longrightarrow \overline{\widehat{\Gamma}\bs \widehat{\D}}^{\Ha_{\infty}}$. If $T$ satisfies condition \ref{condition: smooth and identification of groups}, then this morphism is a normalization of its image.
\end{prop}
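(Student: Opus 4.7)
The plan is to deduce the statement directly from the functoriality of Looijenga compactifications proved as theorem \ref{theorem: normalization looijenga compactification} in the appendix. The first step is to exhibit the underlying morphism of arrangement complements and verify that it fits the hypotheses of that functoriality result; the second step handles generic injectivity under condition \ref{condition: smooth and identification of groups}.

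First I would record the natural data: the inclusion $\Gamma \hookrightarrow \widehat{\Gamma}$ as the subgroup normalizing $\overline{A}$ (from the start of section \ref{subsection: global period map}), and the closed embedding $\D \hookrightarrow \widehat{\D}$ as (a component of) the fixed locus of $\overline{A}$ inside $\widehat{\D}$. By the very definition $\Ha_* = \D \cap (\Ha_\infty \cup \overline{\Ha}_\infty)$, so $\D - \Ha_*$ maps into $\widehat{\D} - \Ha_\infty$ after identifying $\overline{\Ha}_\infty$ with $\Ha_\infty$ under the action of the full group $\Aut(\Lambda,\eta)$ (of which $\widehat{\Gamma}$ is index $2$). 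This produces a well-defined holomorphic map $\Gamma\bs(\D - \Ha_*) \to \widehat{\Gamma}\bs(\widehat{\D} - \Ha_\infty)$, and theorem \ref{theorem: normalization looijenga compactification} then produces the required finite extension $\pi\colon \overline{\Gamma\bs\D}^{\Ha_*} \to \overline{\widehat{\Gamma}\bs\widehat{\D}}^{\Ha_\infty}$.

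For the second claim I would argue that $\pi$ is generically injective when condition \ref{condition: smooth and identification of groups} holds, and then invoke normality of the Looijenga compactification. Generic injectivity follows the template of proposition \ref{prop: git normalization}: a generic point of $\Gamma\bs(\D-\Ha_s)$ corresponds via proposition \ref{proposition: global period map as isomorphism} to a smooth cubic fourfold $X$ with $\Aut(X) = \overline{A}$, and theorem \ref{theorem: laza extension} identifies points of $\widehat{\Gamma}\bs(\widehat{\D}-\Ha_\infty)$ with isomorphism classes of cubic fourfolds in $\M_1$. If two generic points of $\Gamma\bs(\D-\Ha_*)$ have the same image under $\pi$, the corresponding cubics are isomorphic, and by lemma \ref{lemma: isomotphism between automorphism groups} combined with the matching of automorphism groups any linear isomorphism between them conjugates $\overline{A}$ to itself, hence lies in $N$; this forces the two original points to coincide. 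Extending $\pi$ to all of $\overline{\Gamma\bs\D}^{\Ha_*}$ and using that the latter is projective and normal (a Looijenga compactification of an arithmetic quotient), the universal property of normalization gives the second assertion.

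The main obstacle will be correctly packaging the hypotheses of the appendix's functoriality theorem — in particular checking that $\Ha_*$ is exactly the pullback to $\D$ of the arrangement $\Ha_\infty$ on $\widehat{\D}$ together with its complex-conjugate component, and that the finite index of $\Gamma$ in the stabilizer of $\D$ inside $\Aut(\Lambda,\eta)$ is compatible with the hypotheses. Everything else is either formal (the map on complements) or a direct transcription of the argument of proposition \ref{prop: git normalization} (the generic injectivity step).
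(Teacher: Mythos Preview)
Your approach is correct, and for the existence and finiteness of $\pi$ it coincides with the paper's one-line appeal to theorem \ref{theorem: normalization looijenga compactification}. The difference is in how you handle the normalization clause.

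The paper invokes theorem \ref{theorem: normalization looijenga compactification} in full: that theorem already contains the conclusion that $\pi$ is a normalization of its image once condition \ref{condition: arithmetic quotient part} holds, i.e.\ once $\overline{A}$ is the $\widehat{\Gamma}$-stabilizer of a generic point of $\D$. The only thing left implicit is the implication ``condition \ref{condition: smooth and identification of groups} $\Rightarrow$ condition \ref{condition: arithmetic quotient part}'', which is immediate from lemma \ref{lemma: isomotphism between automorphism groups}: a generic point of $\D$ is $\Phi(H^{3,1}(X))$ for some smooth $X$ with $\Aut(X)=\overline{A}$, and the lemma identifies $\Aut(X)$ with the stabilizer of that point. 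So the paper's route is purely lattice-theoretic and stays inside the appendix.

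You instead re-derive generic injectivity by transporting the question back to the moduli side via proposition \ref{proposition: global period map as isomorphism} and then rerunning the argument of proposition \ref{prop: git normalization}. This works, but it is a detour: you are essentially reproving, in this special case, the generic-injectivity step already packaged into proposition \ref{proposition: normalization smooth case} and inherited by theorem \ref{theorem: normalization looijenga compactification}. One small slip in your write-up: you conclude that the linear isomorphism ``lies in $N$'', but what you actually need (and what your argument gives) is that the induced element of $\widehat{\Gamma}$ normalizes $\overline{A}$ and hence lies in $\Gamma$; the character $\lambda$ plays no role on the arithmetic-quotient side. With that adjustment your argument goes through, but the paper's shortcut---verify condition \ref{condition: arithmetic quotient part} and cite---is cleaner and avoids re-entering the geometry of cubic fourfolds.
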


We now state our main theorem:
\begin{thm}
\label{theorem: two normalizations}
The global period $\Prd\colon \F\cong \Gamma\bs(\D-\Ha_s)$ extends to an algebraic isomorphism $\Prd\colon \overline{\F}\cong \overline{\Gamma\bs \D}^{\Ha_*}$.
\end{thm}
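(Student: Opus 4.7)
The plan is to exhibit both $\overline{\F}$ and $\overline{\Gamma\bs\D}^{\Ha_*}$ as normalizations of the same closed subvariety of $\overline{\M}$, and then conclude by the uniqueness of normalization. First I would reduce to the case where $T$ satisfies condition \ref{condition: smooth and identification of groups}: if $T$ does not satisfy it, proposition \ref{proposition: nonsaturated and saturated} supplies a finite morphism $\overline{\F}\longrightarrow \overline{\F^{\prime}}$ to the maximal refinement $T^{\prime}\ge T$ which does, and a compatible finite morphism on the arithmetic quotient side (respecting the hyperplane arrangements $\Ha_*$ and $\Ha_*^{\prime}$) should reduce the theorem for $T$ to the theorem for $T^{\prime}$.

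Assuming condition \ref{condition: smooth and identification of groups}, proposition \ref{prop: git normalization} gives a finite morphism $j\colon \overline{\F}\longrightarrow \overline{\M}$ which is a normalization of its image, while proposition \ref{prop: arithmetic quotient side normalization} gives a finite morphism $\pi\colon \overline{\Gamma\bs\D}^{\Ha_*}\longrightarrow \overline{\widehat{\Gamma}\bs\widehat{\D}}^{\Ha_\infty}$ that is also a normalization of its image. The Looijenga-Laza theorem quoted at the start of the section identifies $\overline{\M}\cong \overline{\widehat{\Gamma}\bs\widehat{\D}}^{\Ha_\infty}$ via the extended period map. Under this identification, the images of $j$ and $\pi$ both contain the dense open subset $\F\cong \Gamma\bs(\D-\Ha_s)$ furnished by proposition \ref{proposition: global period map as isomorphism}. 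Since both images are irreducible closed projective subvarieties of $\overline{\M}$ sharing this dense locus, they must coincide. The universal property of normalization then produces a canonical isomorphism $\overline{\F}\cong \overline{\Gamma\bs\D}^{\Ha_*}$, which on the open part $\F$ restricts to $\Prd$; algebraicity of $\Prd$ (the point left open at the end of proposition \ref{proposition: global period map as isomorphism}) follows at once from its extension to a morphism of projective varieties.

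The main obstacle is the step where I claim the two images in $\overline{\M}$ agree. While both visibly contain the period image of $\F$, one must check that strictly GIT-semistable symmetric cubic fourfolds on the one side and the boundary strata of the Looijenga compactification built from $\Ha_*$ on the other cut out exactly the same closed subvariety of $\overline{\M}$; this leans essentially on the functoriality of Looijenga compactifications developed in the appendix (theorem \ref{theorem: normalization looijenga compactification}), and one must rule out any extra boundary component sneaking in on either side. Similarly, the reduction step for $T$ not satisfying condition \ref{condition: smooth and identification of groups} demands care, since one needs to construct the matching finite morphism between the relevant Looijenga compactifications and verify it is a normalization onto its image.
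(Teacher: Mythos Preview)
Your overall strategy---push both $\overline{\F}$ and $\overline{\Gamma\bs\D}^{\Ha_*}$ down to $\overline{\M}\cong\overline{\widehat{\Gamma}\bs\widehat{\D}}^{\Ha_\infty}$, check the images agree, and invoke uniqueness---is exactly the paper's. But the paper avoids your reduction to condition \ref{condition: smooth and identification of groups} entirely, and this is the main place where your argument is more complicated than it needs to be.

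The paper's device is a lemma (lemma \ref{lemma: uniqueness of semi-normalization}) slightly stronger than uniqueness of normalization: if $f_i\colon Z_i\to Y$ ($i=1,2$) are finite with $Z_i$ normal, and there is a biholomorphism $g\colon U_1\to U_2$ between Zariski-open subsets with $f_1=f_2\circ g$, then $g$ extends to an algebraic isomorphism $Z_1\cong Z_2$. This needs only \emph{finiteness} of $j$ and $\pi$ and normality of the sources, not that either is a normalization of its image. Since both finiteness statements (propositions \ref{prop: git normalization} and \ref{prop: arithmetic quotient side normalization}) hold without condition \ref{condition: smooth and identification of groups}, the reduction step you propose is unnecessary. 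This is worth noting because your reduction is the weakest link: even granting the finite morphism $\overline{\F}\to\overline{\F^{\prime}}$ and a matching one on the arithmetic side, deducing the isomorphism for $T$ from that for $T^{\prime}$ is not automatic (two finite covers of the same base need not be isomorphic).

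Your worry about ``extra boundary components sneaking in'' is unfounded. Since $\F$ is Zariski-open (hence dense) in the irreducible projective variety $\overline{\F}$ and $j$ is finite (hence closed), $j(\overline{\F})$ is exactly the closure of $j(\F)$ in $\overline{\M}$; the same holds for $\pi$. The commutativity coming from proposition \ref{proposition: global period map as isomorphism} identifies $j(\F)$ with $\pi(\Gamma\bs(\D-\Ha_s))$, so the closures coincide. No case analysis of boundary strata is needed.
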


We need the following fact in algebraic geometry. We give the proof for reader's convenience.
\begin{lem}
\label{lemma: uniqueness of semi-normalization}
Let $f_1\colon Z_1\longrightarrow Y$ and $f_2\colon Z_2\longrightarrow Y$ be finite morphisms between irreducible algebraic varieties. Suppose $Z_1,Z_2$ are normal. Moreover, there exists Zariski-open subset $U_i$ of $Z_i$, $i=1$ or $2$, with a biholomorphic map $g\colon U_1\longrightarrow U_2$, such that $f_1=f_2\circ g$. Then $g$ extends to an algebraic isomorphism $Z_1\longrightarrow Z_2$.
\end{lem}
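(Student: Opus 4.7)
The strategy is to construct an algebraic variety $W$ that carries the desired extension and then apply Zariski's main theorem. Form the fiber product $Z_1\times_Y Z_2$, which is algebraic and finite over each $Z_i$ via the two projections, and let $\Gamma_g\coloneqq\{(z,g(z)):z\in U_1\}\subset U_1\times U_2$ be the analytic graph of $g$. The relation $f_1=f_2\circ g$ places $\Gamma_g$ inside $Z_1\times_Y Z_2$. Let $W$ be the Zariski closure of $\Gamma_g$ in $Z_1\times Z_2$; it lies in $Z_1\times_Y Z_2$. Since $\Gamma_g$ is analytically irreducible, $W$ is algebraically irreducible, and since $\Gamma_g$ has analytic dimension $n\coloneqq\dim Z_1=\dim Z_2$ while $\dim(Z_1\times_Y Z_2)\leq n$, we get $\dim W=n$. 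Both projections $\pi_i\colon W\to Z_i$ are finite (as restrictions of finite morphisms) and surjective (their images are Zariski-closed and contain the dense open $U_i=\pi_i(\Gamma_g)$).

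The crux is to show $\pi_1$ (and symmetrically $\pi_2$) is birational. Set $W^{\circ}\coloneqq W^{sm}\cap(U_1\times U_2)$; this is a nonempty Zariski-open subset of $W$, hence a connected smooth complex manifold of dimension $n$ (using that irreducible complex algebraic varieties are classically connected). The intersection $\Gamma_g\cap W^{\circ}$ is a closed analytic subset of $W^{\circ}$ of pure dimension $n$, and it is nonempty (otherwise $\Gamma_g$ would lie in the proper Zariski-closed subset $W\setminus W^{\circ}$, contradicting that $W$ is its Zariski closure). A closed analytic subset of the same dimension as a connected ambient complex manifold is a union of connected components, so this intersection equals $W^{\circ}$; i.e., $W^{\circ}\subset\Gamma_g$. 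Hence $\pi_1|_{W^{\circ}}$ factors through the bijection $\Gamma_g\xrightarrow{\sim}U_1$, so it is injective, forcing the generic degree of $\pi_1$ to be $1$; that is, $\pi_1$ is birational.

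A finite birational morphism onto a normal irreducible variety is an isomorphism, so both $\pi_1\colon W\to Z_1$ and $\pi_2\colon W\to Z_2$ are isomorphisms, and $\pi_2\circ\pi_1^{-1}\colon Z_1\to Z_2$ is an algebraic isomorphism. It extends $g$ because for $z$ in the dense open $\pi_1(W^{\circ})\subset U_1$ we have $\pi_1^{-1}(z)=(z,g(z))\in\Gamma_g$, so $(\pi_2\circ\pi_1^{-1})(z)=g(z)$, and two holomorphic maps on $U_1$ agreeing on a dense open coincide. The main obstacle is the connectedness/dimension argument showing $W^{\circ}\subset\Gamma_g$: this is where the biholomorphic nature of $g$ is essential, supplying a top-dimensional analytic subvariety that fills an entire connected component and thereby pins down the single algebraic component of $Z_1\times_Y Z_2$ carrying the extension.
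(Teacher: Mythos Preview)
Your proof is correct and takes a genuinely different route from the paper's. The paper argues field-theoretically: it shows directly that $g^*\CC(Z_2)=\CC(Z_1)$ by taking $x\in\CC(Z_2)$, observing that $g^*x$ is a meromorphic function on $U_1$ satisfying a polynomial over $\CC(Y)\subset\CC(U_1)$, passing to a Galois cover $\widetilde{U}\to U_1^\circ$ on which this polynomial splits, and then noting that the pullback of $g^*x$ to $\widetilde{U}$ is Deck-invariant (since it comes from a meromorphic function downstairs), hence lies in $\CC(U_1^\circ)=\CC(Z_1)$; equality of function fields then forces equality of integral closures of $\CC[Y]$, i.e.\ of coordinate rings. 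Your approach is instead geometric: you close up the analytic graph $\Gamma_g$ inside $Z_1\times_Y Z_2$ and use a connectedness/dimension argument to see that the smooth part of this closure is entirely contained in $\Gamma_g$, making both projections birational, whereupon Zariski's main theorem applies. The paper's argument is compact and stays purely at the level of fields; yours makes the role of normality (via Zariski's main theorem) and the analytic input (the graph filling out a whole component of the fibre product) more visible, and sidesteps the somewhat compressed Deck-invariance step.
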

\begin{proof}
Let $\CC(Z)$ be the field of rational functions on an irreducible algebraic variety $Z$, and $M(Z)$ the field of meromorphic functions. We claim $g^* \CC(Z_2)=\CC(Z_1)$. Let $x\in \CC(U_2)=\CC(Z_2)$. Since $\CC(U_2)$ is a finite extension of $\CC(Y)$, $g^* x$ is finite over $\CC(U_1)$. We can find a Zariski-open subset $U_1^{\circ}$ of $U_1$, with a Galois covering $\widetilde{U}\longrightarrow U_1^{\circ}$, such that $g^* x\in \CC(\widetilde{U})$. Since $g^* x\in M(U_1^{\circ})$, it is invariant under the action of Deck transformations. Thus $g^* x\in \CC(U_1^{\circ})=\CC(Z_1)$. The claim follows.

Without loss of generality, we assume $Y$ is affine. The coordinate ring $\CC[Z_i]$ is the integral closure of $\CC[Y]$ in $\CC(Z_i)$. So $g^* \CC[Z_2]=\CC[Z_1]$. Thus $g$ extends to an algebraic isomorphism $Z_1\cong Z_2$.
\end{proof}

\begin{proof}[Proof of theorem \ref{theorem: two normalizations}]
We have the following commutative diagram:
\begin{equation}
\label{diagram: two normalizations}
\begin{tikzcd}
\F\arrow{r}{\cong}\arrow[hook]{d} &\Gamma\bs(\D-\Ha_s)\arrow[hook]{d}\\
\overline{\F}\arrow{d}{j} & \overline{\Gamma\bs \D}^{\Ha_*}\arrow{d}{\pi}\\
\overline{\M}\arrow{r}{\Prd} &\overline{\widehat{\Gamma}\bs\widehat{\D}}^{\Ha_{\infty}}
\end{tikzcd}
\end{equation}
with both $j,\pi$ finite morphisms. Since $\F$ is Zariski-open in $\overline{\F}$, the image $j(\F)$ contains a Zariski-open subset of $j(\overline{\F})$. Thus $j(\overline{\F})$ is the closure of $j(\F)$ in $\overline{\M}$. The same argument shows that $\pi(\overline{\Gamma\bs\D}^{\Ha_*})$ is the closure of $\pi(\Gamma\bs(\D-\Ha_s))$ in $\overline{\widehat{\Gamma}\bs\widehat{\D}}^{\Ha_{\infty}}$. By commutativity of diagram \eqref{diagram: two normalizations}, the two images $j(\F)$ and $\pi(\Gamma\bs(\D-\Ha_s))$ are identified via $\Prd$, so are $j(\overline{\F})$ and $\pi(\overline{\Gamma\bs\D}^{\Ha_*})$. By proposition \ref{prop: git normalization}, proposition \ref{prop: arithmetic quotient side normalization} and lemma \ref{lemma: uniqueness of semi-normalization}, we have an identification between $\overline{\F}$ and $\overline{\Gamma\bs\D}^{\Ha_*}$ which extends $\Prd\colon \F\cong \Gamma\bs(\D-\Ha_s)$. This identification is the extended global period map $\Prd\colon \overline{\F}\cong \overline{\Gamma\bs\D}^{\Ha_*}$.
\end{proof}

The proof of the above theorem does not use algebraicity of $\Prd$. Actually, we can deduce algebraicity of $\Prd$ from theorem \ref{theorem: two normalizations}. At this point, we already finish the proof of part $(i),(ii)$ of theorem \ref{theorem: main1} and theorem \ref{theorem: main2}. In the rest of this section, we prove part $(iii)$ of theorem \ref{theorem: main1}.

Let $\V_1$ be the subset of $\V$ consisting of cubic forms of type $T$ defining cubic fourfolds with at worst $\ADE$-singularities. The points in $\V_1$ are stable with respect to the action of $\SL(V)$ on $\Sym^3(V^*)$, hence also stable with respect to the action of $N$ on $\V$. Define $\F_1=N\dbs \PP \V_1$ the moduli space of cubic fourfolds of type $T$ with at worst $\ADE$-singularities. We have:
\begin{prop}
\label{proposition: extension theorem for cubic fourfolds of type T with ADE singularities}
The period map $\Prd\colon \F\longrightarrow \Gamma\bs(\D-\Ha_s)$ extends to an algebraic isomorphism $\Prd\colon\F_1\cong \Gamma\bs(\D-\Ha_*)$.
\end{prop}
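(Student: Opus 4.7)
The plan is to deduce this proposition directly from Theorem \ref{theorem: two normalizations} by identifying $\F_1$ and $\Gamma\bs(\D - \Ha_*)$ as corresponding Zariski-open subvarieties of the two isomorphic compactifications $\overline{\F}$ and $\overline{\Gamma\bs\D}^{\Ha_*}$, using Laza's extension (Theorem \ref{theorem: laza extension}) and the functoriality of the Looijenga compactification.

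First I would verify that $\F_1$ sits as a Zariski-open subvariety of $\overline{\F}$ containing $\F$. By Theorem \ref{theorem: laza ADE stable}, every $F \in \V_1$ is stable for the $\SL(V)$-action on $\Sym^3(V^*)$, hence also stable for the $N$-action on $\V$; thus $\V_1$ is an open $N$-invariant subset of $\V^{ss}$, and $\F_1 = N\dbs\PP\V_1$ is indeed a Zariski-open subvariety of $\overline{\F}$. Next I would identify $\F_1$ with the preimage $j^{-1}(\M_1)$ under the finite morphism $j\colon \overline{\F}\to\overline{\M}$ of Proposition \ref{prop: git normalization}: since the singularity type of $Z(F)$ is an invariant of the $\SL(V)$-orbit, a point $[F]\in\overline{\F}$ lies in $j^{-1}(\M_1)$ if and only if $F\in\V_1$, equivalently $[F]\in\F_1$.

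On the period side, I would use the functoriality of Looijenga compactifications established in the appendix (cf.\ Proposition \ref{prop: arithmetic quotient side normalization}) to observe that
\begin{equation*}
\pi^{-1}(\widehat{\Gamma}\bs(\widehat{\D}-\Ha_\infty)) = \Gamma\bs(\D - \Ha_*)
\end{equation*}
inside $\overline{\Gamma\bs\D}^{\Ha_*}$: by construction the boundary strata of each Looijenga compactification are parametrized by the corresponding hyperplane arrangement, and $\Ha_*=\D\cap(\Ha_\infty\cup\overline{\Ha}_\infty)$ is precisely the pullback of $\Ha_\infty$ along the embedding $\D\hookrightarrow\widehat{\D}$. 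Plugging this into the commutative square
\begin{equation*}
\begin{tikzcd}
\overline{\F} \arrow{r}{\Prd}\arrow{d}{j} & \overline{\Gamma\bs\D}^{\Ha_*}\arrow{d}{\pi}\\
\overline{\M}\arrow{r}{\Prd} & \overline{\widehat{\Gamma}\bs\widehat{\D}}^{\Ha_\infty}
\end{tikzcd}
\end{equation*}
from the proof of Theorem \ref{theorem: two normalizations}, and invoking Laza's identification $\Prd(\M_1) = \widehat{\Gamma}\bs(\widehat{\D} - \Ha_\infty)$, a straightforward diagram chase yields $\Prd(\F_1) = \Gamma\bs(\D - \Ha_*)$. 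Restricting the algebraic isomorphism $\Prd\colon \overline{\F}\cong \overline{\Gamma\bs\D}^{\Ha_*}$ to this open locus then gives the claimed isomorphism $\F_1 \cong \Gamma\bs(\D - \Ha_*)$ extending the one for $\F$.

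The main obstacle will be the preimage identification $\pi^{-1}(\widehat{\Gamma}\bs(\widehat{\D}-\Ha_\infty)) = \Gamma\bs(\D-\Ha_*)$. The rest is a formal chase through finite morphisms and commuting period maps, but this equality depends on the precise stratified description of Looijenga compactifications and the behavior of $\pi$ on boundary strata, which is exactly what the appendix establishes.
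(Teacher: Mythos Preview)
Your proposal is correct and follows essentially the same route as the paper: identify $\F_1$ with $j^{-1}(\M_1)$ and $\Gamma\bs(\D-\Ha_*)$ with $\pi^{-1}\bigl(\widehat{\Gamma}\bs(\widehat{\D}-\Ha_\infty)\bigr)$, then use Laza's identification $\Prd(\M_1)=\widehat{\Gamma}\bs(\widehat{\D}-\Ha_\infty)$ together with the commutative square over $\overline{\M}\cong\overline{\widehat{\Gamma}\bs\widehat{\D}}^{\Ha_\infty}$. The only cosmetic difference is in the last step: you restrict the already-established isomorphism $\overline{\F}\cong\overline{\Gamma\bs\D}^{\Ha_*}$ to these matching open preimages, whereas the paper instead re-applies Lemma~\ref{lemma: uniqueness of semi-normalization} to the finite maps $j|_{\F_1}$ and $\pi|_{\Gamma\bs(\D-\Ha_*)}$ over their common image; once Theorem~\ref{theorem: two normalizations} is in hand your restriction argument is the more direct of the two.
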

\begin{proof}
From definition we have $j(\F_1)=j(\overline{\F})\cap \M_1$ and $j^{-1}(j(\F_1))=\F_1$. From proposition \ref{prop: git normalization}, the morphism $j\colon\F_1\longrightarrow \M_1$ is finite. On the other hand, we have \begin{equation*}
\pi(\Gamma\bs(\D-\Ha_*))=\pi(\overline{\Gamma\bs\D}^{\Ha_*})\cap \widehat{\Gamma}\bs(\widehat{\D}-\Ha_{\infty})
\end{equation*}
and
\begin{equation*}
\pi^{-1}(\pi(\Gamma\bs(\D-\Ha_*)))=\Gamma\bs(\D-\Ha_*).
\end{equation*}
From proposition \ref{prop: arithmetic quotient side normalization}, the morphism $\pi\colon\Gamma\bs(\D-\Ha_*)\longrightarrow \widehat{\Gamma}\bs(\widehat{\D}-\Ha_{\infty})$ is finite. By theorem \ref{theorem: laza extension} and theorem \ref{theorem: two normalizations}, the two images $j(\F_1)$ and $\pi(\Gamma\bs(\D-\Ha_*))$ are identified via $\Prd$. By lemma \ref{lemma: uniqueness of semi-normalization}, we have algebraic isomorphism $\Prd\colon \F_1\cong \Gamma\bs(\D-\Ha_*)$.
\end{proof}

If the hyperplane arrangement $\Ha_*$ is empty, then the Looijenga compactification of $\Gamma\bs
\D$ is actually the Baily-Borel compactification. In the rest of this section, we give a criterion of emptiness of $\Ha_*$ from the perspective of $\GIT$. Following the notation in section 6 of \cite{laza2009moduli}, there is a rational curve $\chi$ parametrizing certain semi-stable cubic fourfolds, given by:
\begin{equation}
\label{equation: chi}
g_{a,b}(x_0,\dots,x_5)=
\left|\begin{array}{ccc}
x_0 & x_1 & x_2+2ax_5 \\
x_1 & x_2-ax_5 & x_3 \\
x_2+2ax_5 & x_3 & x_4
\end{array}\right|
+bx_5^3
\end{equation}
where $(a:b)\in WP(1:3)$. We denote the cubic fourfold corresponding to $(a:b)$ to be $X_{(a:b)}$. When $b=0$, the corresponding cubic fourfold $X_{(1:0)}$ is the determinantal cubic fourfold. The singular locus of $X_{(1:0)}$ is the image of the Veronese embedding $\PP V_3\hookrightarrow \PP\Sym^2(V_3)\cong \PP V$. Here $V_3$ is a three dimensional complex vector space with $\Sym^2(V_3)\cong V$. This induces a natural map from $\GL(V_3)$ to $\GL(V)$. Let $G_1$ be the intersection of $\SL(V)$ with the image of $\GL(V_3)\longrightarrow \GL(V)$.

For $b\ne 0$, the singular locus of the cubic fourfold $X_{(a:b)}$ is the image of $\PP V_2\hookrightarrow \PP(\Sym^4(V_2)\oplus \CC)\cong\PP V$. Here $V_2$ is a two dimensional complex vector space with $\Sym^4(V_2)\oplus \CC\cong V$. Let $\widetilde{G_2}$ be the subgroup of $\GL(V_2)\times \CC^*$ consisting of elements $(g,u)$ such that $(\det g)^2/u$ is a third root of unity. Let $G_2$ be the intersection of $\SL(V)$ with the image of the natural map $\widetilde{G_2}\longrightarrow \GL(V)$. The center of $\SL(V)$ is contained in both $G_1$ and $G_2$.

The automorphism group $\Aut(X_{(1:0)})$ is the image of $G_1$ in $\PSL(V)$, and this induces a character $\lambda_1$ of $G_1$. Explicitly, for $g\in \GL(V_3)$ representing an element in $G_1$, we have $\lambda_1(g)=\det(g)^4$. The automorphism group of $X_{(0,1)}$ is the image of $G_2$ in $\PSL(V)$, and this induces a character $\lambda_2$ of $G_2$ with $\lambda_2((g,u))=u^3$. We have the following criterion:
\begin{thm}
\label{theorem: criterion for bb}
The following three statements are equivalent:
\begin{enumerate}[(i)]
\item The hyperplane arrangement $\Ha_*$ is nonempty,
\item The space $\V_{\lambda}$ intersects with $\SL(V)\chi$,
\item The pair $(A,\lambda)$ factor through $(G_1,\lambda_1)$ or $(G_2,\lambda_2)$ defined as above.
\end{enumerate}
\end{thm}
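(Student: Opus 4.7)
The plan is to handle (i) $\Leftrightarrow$ (ii) via the identifications of the previous section combined with Laza's GIT classification, and to handle (ii) $\Leftrightarrow$ (iii) by explicit analysis of the family $\chi$. For (i) $\Leftrightarrow$ (ii), the key observation is that via the extended period map a point of $\Ha_*$ in the Looijenga compactification corresponds exactly to (the $N$-orbit of) a type-$T$ cubic fourfold in the GIT boundary, i.e.\ a strictly semistable cubic fourfold whose $\SL(V)$-orbit hits $\chi$. More formally, one combines proposition \ref{proposition: extension theorem for cubic fourfolds of type T with ADE singularities} and theorem \ref{theorem: two normalizations} to identify
\begin{equation*}
\overline{\F}\setminus \F_1 \;\cong\; \overline{\Gamma\bs\D}^{\Ha_*}\setminus \Gamma\bs(\D - \Ha_*).
\end{equation*}
On the GIT side this set equals $N\bs(\V_\lambda \cap \SL(V)\chi)$ by theorem \ref{theorem: laza ADE stable} and the classification in \cite{laza2009moduli}; on the Looijenga side it is empty exactly when $\Ha_*=\emptyset$, with any BB cusp of $\Gamma\bs\D$ absorbed into the $\Ha_*$-stratum via the functoriality of Looijenga compactifications developed in the appendix. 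This yields (i) $\Leftrightarrow$ (ii).

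For (iii) $\Rightarrow$ (ii), suppose $(A,\lambda)$ factors through $(G_i,\lambda_i)$ for $i\in\{1,2\}$; then $g_{1,0}$ respectively $g_{0,1}$ is preserved by the image of $A$ with character $\lambda$, hence lies in $\V_\lambda$ after an $\SL(V)$-translation of the embedding, and manifestly lies on $\chi$. Conversely for (ii) $\Rightarrow$ (iii), take $F \in \V_\lambda \cap \SL(V)\chi$; after $\SL(V)$-translation assume $F = g_{a,b}$. The $A$-action on $V$ preserves $X_{(a:b)}$ and in particular its singular locus. In the case $b=0$, this locus is the Veronese image of $\PP V_3$, which linearly spans $V=\Sym^2(V_3)$; hence $A$ preserves the $\GL(V_3)$-structure on $V$ and embeds into $G_1$, with the induced character equal to $\lambda_1|_A$ because both are read from the action on $\det M$. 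In the case $b\ne 0$, the singular locus is the Veronese image of $\PP V_2$ spanning the subspace $\Sym^4(V_2)\subset V$; $A$ preserves the decomposition $V = \Sym^4(V_2)\oplus \CC$ and acts through $\GL(V_2)\times \CC^*$, and combining the $\SL(V)$-condition with the requirement that $A$ scale $\det M$ and $bx_5^3$ by the same factor $\lambda(a)$ pins down the lift into $\widetilde{G_2}$ and forces $\lambda|_A = \lambda_2|_A$.

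The main obstacle I anticipate is the verification in the case $b\ne 0$ of the direction (ii) $\Rightarrow$ (iii): one must extract the defining relation $(\det g)^2/u \in \mu_3$ of $\widetilde{G_2}$ from the joint $\SL(V)$-membership and $A$-invariance constraints, which requires carefully tracking how the $\GL(V_2)$-action on $\Sym^4(V_2)$ manifests on the entries of the matrix $M$ appearing in the definition of $g_{a,b}$. A subsidiary delicate point in step (i) $\Leftrightarrow$ (ii) is that BB cusps of $\Gamma\bs\D$ must contribute only to the $\Ha_*$-stratum rather than to an independent boundary component; this follows because any such cusp pushes down to a cusp of $\widehat{\Gamma}\bs\widehat{\D}$ which lies inside $\Ha_\infty$ by Looijenga--Laza, and hence sits over $\Ha_*$ after the Looijenga modification.
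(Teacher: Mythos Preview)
Your overall strategy matches the paper's: establish (ii)~$\Leftrightarrow$~(iii) by analysing the automorphism groups of the points on $\chi$, and establish (i)~$\Leftrightarrow$~(ii) by comparing the boundaries under the extended period map $\overline{\F}\cong\overline{\Gamma\bs\D}^{\Ha_*}$. Your treatment of (ii)~$\Leftrightarrow$~(iii) is more explicit than the paper's (which simply cites the preceding discussion) and is essentially correct.

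There are, however, two genuine gaps in your (i)~$\Leftrightarrow$~(ii) argument.

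First, the GIT side. You assert that $\overline{\F}\setminus\F_1$ equals $N\bs(\V_\lambda\cap\SL(V)\chi)$, but this is not automatic: a point $[F]\in\overline{\F}\setminus\F_1$ only tells you that the $\SL(V)$-orbit \emph{closure} of $F$ meets $\chi$, not that $F$ itself lies on $\SL(V)\chi$. The paper closes this gap by choosing the representative $F$ with closed $N$-orbit and then invoking Luna's theorem (\cite{luna1975Adh}) to conclude that the $\SL(V)$-orbit of $F$ is also closed, hence $F\in\SL(V)\chi$. You need this step.

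Second, and more seriously, the Looijenga side. You correctly flag as delicate the possibility that $\overline{\Gamma\bs\D}^{\Ha_*}\setminus\Gamma\bs(\D-\Ha_*)$ could be nonempty because of a Baily--Borel cusp even when $\Ha_*=\varnothing$. But your proposed resolution---that a cusp of $\Gamma\bs\D$ ``pushes down to a cusp of $\widehat{\Gamma}\bs\widehat{\D}$ which lies inside $\Ha_\infty$, and hence sits over $\Ha_*$''---is circular: a BB cusp of $\widehat{\D}$ lies only in the \emph{closure} of $\Ha_\infty$, not in $\Ha_\infty$ itself, and if $\Ha_*=\varnothing$ there is no $\Ha_*$-stratum for it to sit over. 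What is actually needed is the statement that $\D^\Sigma$ meets $\overline{\Ha_\infty}$ (in the semi-toric compactification $\widehat{\D}^\Sigma$) only if $\D$ meets $\Ha_\infty$; this is precisely what the paper extracts from the proof of the semi-toric functoriality in the appendix (the sentence ``any $H\in\widehat{\Ha}$ not intersecting with $\D$ is still away from $\D^\Sigma$ after taking its closure''). Your reference to ``the functoriality of Looijenga compactifications'' is pointing at the right place, but you have not identified the specific fact that does the work, and your stated reasoning does not supply it.
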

\begin{proof}
The discussion above shows that $(ii)$ and $(iii)$ are equivalent.

We next show the first two statements are equivalent. Firstly we show that the image of $j\colon \overline{\F}\to\overline{\M}$ intersects with the image of $\chi$ in $\overline{\M}$ if and only if $(ii)$ holds. If $(ii)$ holds, the intersection point survives after taking $\GIT$ quotients since the $\SL(V)$ orbits of points in $\chi$ are closed. If $j(\overline{\F})$ intersects with the image of $\chi$ at $[F]$ in $\overline{M}$, then we take the representative $F$ in $\V_{\lambda}$ with closed $N$-orbit. According to the main theorem in \cite{luna1975Adh}, the $\SL(V)$-orbit of $F$ is also closed. So $F$ is contained in $\SL(V)\chi$.

Secondly we recall that the blow-up and blow-down construction in Looijenga compactification $\overline{\widehat{\Gamma}\bs\widehat{\D}}^{\Ha_{\infty}}$ gives a strata of projective line $\PP^1$ corresponding to $\chi$. We claim that $\Ha_*$ is nonempty if and only if the image of $\overline{\Gamma\bs\D}^{\Ha_*}$ intersects with the $\PP^1$. From the proof of functoriality of semi-toric compactification in appendix \ref{subsection: functoriality of semitoric blow up}, we know that $\D^\Sigma$ intersects with $\overline{\Ha}_\infty$ if and only if $\D$ intersects with $\Ha_{\infty}$. So the image of $\overline{\Gamma\bs\D}^{\Ha_*}$ intersects with the $\PP^1$ if and only if $\D$ intersects with $\Ha_\infty$. The equivalence of $(i)$ and $(ii)$ follows.
\end{proof}

We will apply this criterion to prime-order groups, see proposition \ref{proposition: whether bb}.

\section{Examples and Related Constructions}
\label{section: examples and conjectures}
Now given a symmetry type $T=[(A,\lambda)]$ for cubic fourfolds, we can obtain a global period map $\Prd\colon \overline{\F}\cong\overline{\Gamma\bs\D}^{\Ha_*}$. A closely related question is to classify automorphism groups of cubic fourfolds. There are 13 conjugacy classes of prime-order automorphisms of smooth cubic fourfolds (see \cite{gonzalez2011automorphisms}). For two of them, our main theorems recover some of the main results in \cite{allcock2002complex, allcock2011moduli}, \cite{looijenga2007period} and \cite{laza2017moduli}. We will discuss these examples in more details in sections \ref{subsection: classification prime order} and \ref{subsection: examples revisit}.

Cubic fourfolds have very close relation with hyper-K\"ahler manifolds, see \cite{beauville1985variety},\cite{hassett2000special}. We briefly recall the story. For a smooth cubic fourfold $X$, consider its Fano scheme of lines $F_1(X)$. This is a hyper-K\"ahler fourfold of $K3^{[2]}$ type. Automorphism group of a smooth cubic fourfold $X$ is naturally identified with the automorphism group of the polarized hyper-K\"ahler manifold $F_1(X)$ (the polarization is from Veronese embedding), see \cite{fu2016classification}.

The classification of automorphisms and automorphism groups of hyper-K\"ahler manifolds have appealed a lot of interests recently. A celebrated result of Mukai (\cite{mukai1988finite}) says that there are 11 maximal finite groups of symplectic automorphisms of $K3$ surfaces. More precisely, these 11 groups are exactly those maximal subgroups of the Mathieu group $M_{23}$ with at least 5 orbits in their induced action on $\{1,2,\dots,24\}$. The proof by Mukai was simplified by Kond\=o using Niemeier lattices (\cite{kondo1998niemeier}). It turns out that using Leech lattice instead of Niemeier lattices, one can obtain a more uniform treatment (see \cite{gaberdiel2012symmetries} and \cite{huybrechts2016derived}). About higher dimensional cases, there is a systematic study by Mongardi in his thesis (\cite{mongardi2012symplectic,mongardi2013symplectic,mongardi2016towards}).

In \cite{hohn2014finite}, H\"ohn and Mason classified all maximal symplectic automorphism groups of hyper-K\"ahler fourfolds of $K3^{[2]}$ type. Those groups are all subgroups of the Conway group (automorphism group of the Leech lattice quotient by its center).

Another closely related problem is to characterize the moduli spaces of symmetric or lattice-polarized hyper-K\"ahler manifolds. There are works \cite{dolgachev2007moduli} (section 11), \cite{artebani2011K3} (section 9), \cite{boissiere2015complex}, \cite{joumaah2016nonsymplectic}, \cite{camere2016lattice} (section 3), \cite{boissiere2016classification} (section 5) along this direction.
\subsection{Prime-order Automorphisms of Smooth Cubic Fourfolds}
\label{subsection: classification prime order}
The classification of prime-order automorphisms of smooth cubic fourfolds was given in \cite{gonzalez2011automorphisms} (theorem 3.8). For readers' convenience we present the result in this section. (There was a small mistake in \cite{gonzalez2011automorphisms}, theorem 3.8. The second example with $p=5$ contains only singular cubic fourfolds. This is pointed out in \cite{boissiere2016classification}, remark 6.3).

\begin{thm}[\cite{gonzalez2011automorphisms}]
Let $\omega$ be a prime $p$-th root of unity and $\rho=(m_0, \cdots, m_5)$ be the automorphism of $V\cong \CC^6$ given by $(x_0,\cdots, x_5)\mapsto (\omega^{m_0} x_0,\cdots, \omega^{m_5}x_5)$.
The list of smooth cubic polynomials $F$ preserved by the action under $\rho$ is as follows:
\begin{align*}
T_2^1\colon &\rho=(0,0,0,0,0,1), n=14, \\
&F=L_3(x_0,\cdots, x_4)+x_5^2L_1(x_0,\cdots, x_4).\\
T_2^2\colon &\rho=(0,0,0,0,1,1), n=12, \\
&F=L_3(x_0,\cdots, x_3)+x_4^2L_1(x_0,\cdots, x_3)+x_4x_5M_1(x_0,\cdots, x_3)+x_5^2N_1(x_0,\cdots, x_3).\\
T_2^3\colon &\rho=(0,0,0,1,1,1), n=10, \\
&F=L_3(x_0, x_1, x_2)+x_0L_2(x_3,x_4, x_5)+x_1M_2(x_3,x_4, x_5)+x_2N_2(x_3,x_4, x_5).\\
T_3^1\colon &\rho=(0,0,0,0,0,1), n=10, \\
&F=L_3(x_0,\cdots, x_4)+x_5^3.\\
T_3^2\colon &\rho=(0,0,0,0,1,1), n=4, \\
&F=L_3(x_0,\cdots, x_3)+M_3(x_4, x_5).\\
T_3^3\colon &\rho=(0,0,0,0,1,2), n=8, \\
&F=L_3(x_0,\cdots, x_3)+x_4^3+x_5^3+x_4x_5M_1(x_0,\cdots, x_3).\\
T_3^4\colon &\rho=(0,0,0,1,1,1), n=2, \\
&F=L_3(x_0, x_1, x_2)+M_3(x_3, x_4, x_5).\\
T_3^5\colon &\rho=(0,0,0,1,1,2), n=7,\\
&F=L_3(x_0, x_1, x_2)+M_3(x_3, x_4)+x_5^3+x_3x_5L_1(x_0,x_1, x_2)+x_4x_5M_1(x_0,x_1, x_2).\qquad\qquad\qquad\qquad\qquad\qquad\\
T_3^6\colon &\rho=(0,0,1,1,2,2), n=8, \\
&F=L_3(x_0, x_1)+M_3(x_2, x_3)+N_3(x_4, x_5)+\sum_{i=1,2; j=3,4; k=5,6}a_{ijk} x_ix_jx_k.\\
T_3^7\colon &\rho=(0,0,1,1,2,2), n=6, \\
&F=x_2L_2(x_0, x_1)+x_3M_2(x_0, x_1)+x_4^2L_1(x_0, x_1)+x_4x_5M_1(x_0,x_1)+x_5^2N_1(x_0,x_1)+x_4N_2(x_2,x_3)+x_5O_2(x_2,x_3).\\
T_5^1\colon &\rho=(0,0,1,2,3,4), n=4, \\
&F=L_3(x_0, x_1)+x_2x_5L_1(x_0, x_1)+x_3x_4M_1(x_0, x_1)+x_2^2x_4+x_2x_3^2+x_3x_5^2+x_4^2x_5.\\
T_7^1\colon &\rho=(1,2,3,4,5,6), n=2, \\
&F=x_0^2x_4+x_1^2x_2+x_0x_2^2+x_3^2x_5+x_3x_4^2+x_1x_5^2+ax_0x_1x_3+bx_2x_4x_5\\
T_{11}^1\colon &\rho=(0,1,3,4,5,9), n=0, \\
&F=x_0^3+x_1^2x_5+x_2^2x_4+x_2x_3^2+x_1x_4^2+x_3x_5^2.\\
\end{align*}
Here the lower index is the prime $p$, the polynomials $L_i,M_i,N_i$ are of degree $i$, and $n$ is the dimension of the corresponding $\GIT$-quotient.
\end{thm}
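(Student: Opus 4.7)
The plan is to classify, up to $\SL(V)$-conjugation and a choice of primitive $p$-th root of unity $\omega$, all pairs $(\rho,[F])$ where $\rho\in\SL(V)$ has prime order $p$ and $Z(F)\subset\PP V$ is a smooth cubic fourfold preserved by $\rho$; the statement then follows by tabulation. Since $\rho$ has finite order it is diagonalizable, so fixing an eigenbasis allows us to write $\rho=\diag(\omega^{m_0},\ldots,\omega^{m_5})$ with $\sum m_i\equiv 0\pmod{p}$ (so that $\det\rho=1$). The tuple $(m_0,\ldots,m_5)\in(\ZZ/p\ZZ)^6$ is determined only up to permutation of entries (relabeling of basis vectors) and simultaneous scaling by a unit of $(\ZZ/p\ZZ)^\times$ (changing the chosen $\omega$). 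A monomial $x_ix_jx_k$ is $\rho$-invariant if and only if $m_i+m_j+m_k\equiv 0\pmod{p}$, so $\V=\Sym^3(V^*)^\rho$ is the coordinate subspace spanned by those monomials, easily read off from the weight vector.

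Next I would impose smoothness. The $\rho$-fixed locus in $\PP V$ is a disjoint union of projective subspaces $\PP V_\alpha$ indexed by the distinct residues $\alpha\in\ZZ/p\ZZ$ occurring among the weights; any $\rho$-invariant $F$ automatically restricts to $F|_{\PP V_\alpha}$ and has its $\PP V_\alpha$-normal derivatives lying in fixed eigenspace components, so $Z(F)$ is singular along $\PP V_\alpha$ unless these jointly cut out the empty scheme. For each weight vector this is a finite combinatorial condition on which monomial classes belong to $\V$, verifiable by exhibiting a single smooth member and then invoking Bertini; conversely, if it fails for every choice of coefficients, the weight vector is discarded. Together with the Hodge-theoretic constraint of proposition \ref{proposition: hermitian form} on $H^4(X)_\zeta$, this yields the bound $p\leq 11$: for $p\geq 13$ pigeonhole on six weights leaves only configurations with too few invariant monomials to kill the singular locus.

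With $p\in\{2,3,5,7,11\}$ fixed, I would enumerate the normalized weight vectors and apply the smoothness criterion. For $p=2$ three shapes survive, one for each nonzero count among $\{m_i=1\}$, giving $T_2^1,T_2^2,T_2^3$; for $p=3$ all seven multisets in $(\ZZ/3\ZZ)^6$ summing to zero give $T_3^1,\ldots,T_3^7$; for $p=5,7,11$ the combinatorics is rigid enough that a single weight vector survives each time. For every surviving case I would write out the generic invariant $F\in\V$, reduce further by the centralizer-action to match the normal forms displayed, and compute $n=\dim\PP\V^{sm}-\dim N$ where $N$ is the reductive group of section \ref{subsection: Geometric Invariant Theory for Symmetric Hypersurfaces}. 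The main obstacle is the smoothness step: a weight vector can admit many nonzero invariant cubics yet none of them smooth, and it is easy to miscount (the historical erratum to the $p=5$ case, noted after the theorem statement, is precisely such a pitfall); the safe remedy is to compute the singular locus of the universal invariant cubic over $\V$ as a determinantal subscheme and verify it is properly contained in $\V$, rather than attempt to exhibit smooth members by guesswork.
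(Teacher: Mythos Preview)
The paper does not prove this theorem; it is quoted from \cite{gonzalez2011automorphisms} (theorem 3.8), with the correction from \cite{boissiere2016classification} noted in the remark following the statement. There is therefore no proof in the paper to compare your proposal against.

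That said, your sketch is the standard approach and matches what the cited source does. Two small corrections are worth making. First, the displayed weight vectors do \emph{not} satisfy $\sum m_i\equiv 0\pmod p$ (for instance $T_3^1$ has sum $1$): the classification is of automorphisms of the projective hypersurface, and the $\rho$ listed is merely a convenient diagonal lift to $\GL(V)$, not necessarily to $\SL(V)$. The correct normalization is modulo simultaneous translation of all $m_i$ by a constant in $\ZZ/p\ZZ$ (together with the permutation and unit-scaling you already mention). Second, the relevant condition on monomials is not $\rho$-invariance but membership in a single $\rho$-eigenspace of $\Sym^3(V^*)$: the hypersurface $Z(F)$ is preserved as soon as $\rho(F)=\lambda F$ for some scalar $\lambda$. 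This is precisely why the same weight vector $(0,0,1,1,2,2)$ produces two distinct types $T_3^6$ (character $1$) and $T_3^7$ (character $\omega$). With these adjustments your outline is sound, and your caution about the smoothness check --- computing the singular locus of the universal invariant cubic rather than guessing members --- is exactly the right safeguard against the kind of error that occurred in the original $p=5$ list.
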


\begin{rmk}
This classification offers $13$ symmetry types with $\# \overline{A}$ a prime number $2, 3,5,7$ or $11$. Those symmetry types may not satisfy condition \ref{condition: smooth and identification of groups}.
\end{rmk}

By Griffiths residue calculus (\cite{griffiths1969periods}), for a smooth cubic fourfold $X=Z(F)$, the complex line $H^{3,1}(X)$ is generated by $\Res_X(\frac{\Omega}{F^2})$. Here $\Omega=\Sigma_{i=0}^5 (-1)^i x_i dx_1\wedge\dots\wedge\widehat{dx_i}\wedge\dots \wedge dx_5$. By direct calculation, we have:

\begin{prop}
\label{proposition: ball or type four prime order}
\begin{enumerate}[(i)]
\item For type $T=T_2^2, T_3^3, T_3^4, T_3^6, T_5^1, T_7^1, T_{11}^1$, we have $\zeta=1$.
\item For type $T=T_2^1, T_2^3$, we have $\zeta=-1$
\item For type $T=T_3^1, T_3^2, T_3^5, T_3^7$, we have $\zeta(\rho)$ equals to $\omega$ or $\overline{\omega}$.
\end{enumerate}
\end{prop}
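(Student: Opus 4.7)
The plan is to exploit the Griffiths residue formula just recalled before the statement. For a smooth cubic fourfold $X=Z(F)$, the line $H^{3,1}(X)$ is spanned by $\Res_X(\Omega/F^2)$, so for any automorphism $\rho\in A$ acting linearly on $V$, the pullback acts on $H^{3,1}(X)$ by the scalar that $\rho^*$ produces on the meromorphic form $\Omega/F^2$. Since $\rho$ acts diagonally by $x_i\mapsto \omega^{m_i}x_i$, a direct check gives $\rho^*\Omega=\omega^{m_0+\cdots+m_5}\Omega$, and $\rho^*F=\mu F$ for some scalar $\mu=\mu(\rho)$ determined by the common weight (mod $p$) of the monomials appearing in $F$. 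Consequently,
\begin{equation*}
\zeta(\rho)=\omega^{m_0+m_1+m_2+m_3+m_4+m_5}\,\mu(\rho)^{-2}.
\end{equation*}

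First I would read off $\mu(\rho)$ from the normal forms of $F$ listed in the theorem of Gonz\'alez-Aguilera--Liendo. For almost every entry on the list, every monomial of $F$ has $\rho$-weight congruent to $0$ modulo $p$, so $\mu(\rho)=1$; the only exceptional case is $T_3^7$, where all monomials of $F$ have weight $\equiv 1\pmod 3$, giving $\mu(\rho)=\omega$. Then for each of the 13 types it remains to sum $m_0+\cdots+m_5$ modulo $p$ and divide by $\mu(\rho)^2$.

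Carrying this out type by type gives, for instance, $\omega^{\sum m_i}=-1$ for $T_2^1$ and $T_2^3$ (sums $1$ and $3$, with $\mu=1$), $\omega^{\sum m_i}=1$ for $T_2^2,T_3^3,T_3^4,T_3^6,T_5^1,T_7^1,T_{11}^1$ (each sum divisible by the respective prime and $\mu=1$), and $\omega^{\sum m_i}\in\{\omega,\overline{\omega}\}$ for $T_3^1,T_3^2,T_3^5$ (sums $1,2,4$ with $\mu=1$). The one nontrivial computation is $T_3^7$: here $\sum m_i=6\equiv 0$ but $\mu(\rho)=\omega$, so $\zeta(\rho)=\mu(\rho)^{-2}=\omega^{-2}=\omega$, landing correctly in case (iii). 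Matching the resulting scalars against the three cases of the statement completes the proof.

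The step that requires the most care is the determination of $\mu(\rho)$, because it depends on the ambient symmetry class $(A,\lambda)$ rather than just on the weight vector $\rho$: the types $T_3^6$ and $T_3^7$ have identical weights $(0,0,1,1,2,2)$ yet land in different cases, and this is reconciled only by inspecting the monomials of $F$. Once that bookkeeping is done for all 13 normal forms, the rest of the argument is the three-line residue calculation above.
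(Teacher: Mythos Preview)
Your argument is correct and is exactly the ``direct calculation'' via Griffiths residues that the paper invokes; you have simply written it out in full, including the key observation that $T_3^6$ and $T_3^7$ share the weight vector $(0,0,1,1,2,2)$ but differ in the common monomial weight $\mu(\rho)$ ($1$ versus $\omega$), which is what separates them into cases (i) and (iii). The only thing left implicit is the sign/convention for the induced action on $H^{3,1}$ (pullback by $\rho$ versus $\rho^{-1}$), but since the statement only distinguishes $\zeta=1$, $\zeta=-1$, and $\zeta\in\{\omega,\overline{\omega}\}$, this is immaterial.
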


We already proved that $\Prd(\F^m)$ is either $\D-\Ha_s$ or $\D\sqcup\overline{\D}-\Ha_s-\overline{\Ha_s}$. From proposition \ref{proposition: one or two components}, we have:
\begin{prop}
\label{proposition: image of local period map for cubic fourfolds with prime order automorphism}
\begin{enumerate}[(i)]
\item If $T=T_3^1, T_3^2, T_3^5, T_3^7$, then $\D$ is a complex hyperbolic ball and $\widetilde{\Prd}(\F^m)=\D-\Ha_s$.
\item If $T=T_2^1, T_2^2, T_2^3, T_3^3, T_3^4, T_3^6$ or $T_7^1$, then $\D$ is a type $\IV$ domain and $\widetilde{\Prd}(\F^m)=\D\sqcup\overline{\D}-\Ha_s-\overline{\Ha_s}$.
\end{enumerate}
\end{prop}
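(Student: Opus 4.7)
My approach is to feed the explicit data of Proposition \ref{proposition: ball or type four prime order} into the general results of Propositions \ref{proposition: hermitian form}, \ref{proposition: properties of local period map}, and \ref{proposition: one or two components}.

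The ball-versus-type-IV distinction is read off directly: Proposition \ref{proposition: hermitian form} asserts that the Hermitian form $h$ has signature $(n',1)$ (giving a complex hyperbolic ball) precisely when $\zeta \neq \overline{\zeta}$, and signature $(n',2)$ (giving a type $\IV$ domain) otherwise. For the four types $T_3^1, T_3^2, T_3^5, T_3^7$, Proposition \ref{proposition: ball or type four prime order}(iii) gives $\zeta(\rho) \in \{\omega, \overline{\omega}\}$, so $\zeta \neq \overline{\zeta}$ and $\D$ is a ball; for the seven remaining types, $\zeta \in \{\pm 1\}$ is real and $\D$ is of type $\IV$.

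Next, Proposition \ref{proposition: properties of local period map} identifies $\widetilde{\Prd}(\F^m)$ as either $\D - \Ha_s$ or $\D \sqcup \overline{\D} - \Ha_s - \overline{\Ha_s}$, according as $\F^m$ has one or two connected components. For part (i), $\zeta$ is non-real, so Proposition \ref{proposition: one or two components}(i) yields connectivity and hence the image is $\D - \Ha_s$. For part (ii), Proposition \ref{proposition: one or two components}(ii) reduces the claim to verifying that $T = \overline{T}$ for each of the seven type $\IV$ cases.

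The verification of $T = \overline{T}$ proceeds by inspection of the classification. In each of the seven cases, $A$ is generated by $\mu_6$ together with a diagonal matrix $\rho$ with roots-of-unity entries, so complex conjugation sends $\rho$ to $\rho^{-1}$ and therefore preserves $A$ setwise, giving $\overline{A} = A$ inside $\SL(V)$. One then checks that the polynomial forms in the classification table all admit real coefficients; consequently the complex conjugation on the ambient vector space sends $(A, \lambda)$ to a pair equivalent under $\SL(V)$-conjugation to $(\overline{A}, \widetilde{\lambda})$, confirming $T = \overline{T}$. The main technical obstacle is precisely this case-by-case check of real representability of the polynomial, but it is immediate by inspection of the explicit monomial structure in each of the seven cases.
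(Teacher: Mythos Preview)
Your proposal is correct and matches the paper's approach exactly: the paper simply writes ``From proposition \ref{proposition: one or two components}, we have'' before stating this result, leaving implicit precisely the verification you spell out (reading off $\zeta$ from Proposition \ref{proposition: ball or type four prime order}, then invoking parts (i) and (ii) of Proposition \ref{proposition: one or two components}). One notational caution: where you write ``$\overline{A}=A$ inside $\SL(V)$'' you mean the complex-conjugate group, which the paper denotes $\widetilde{A}$; the symbol $\overline{A}$ is reserved for the image of $A$ in $\PSL(V)$.
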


Now we apply theorem \ref{theorem: criterion for bb} for prime-order cases.
\begin{prop}
\label{proposition: whether bb}
For $T=T_2^1,T_2^3,T_3^2, T_3^3, T_3^4, T_3^7, T_{11}^1$, we obtain isomorphism between $\GIT$ compactification $\overline{\F}$ with Baily-Borel compactification $\overline{\Gamma\bs\D}^{bb}$. For $T=T_2^2, T_3^5, T_3^6, T_5^1, T_7^1$, we do not obtain Baily-Borel compactification.
\end{prop}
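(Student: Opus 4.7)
The plan is to apply Theorem \ref{theorem: criterion for bb}, in particular the equivalence of conditions (i) and (iii), to each of the thirteen prime-order symmetry types from the classification. For each $T = [(A, \lambda)]$ with $A = \langle \rho \rangle$ of prime order $p$, we must decide whether $(A, \lambda)$ admits, up to $\SL(V)$-conjugacy, a factorization through either $(G_1, \lambda_1)$ (detecting the determinantal cubic $X_{(1:0)}$) or $(G_2, \lambda_2)$ (detecting the cubic $X_{(0:1)}$ and its deformations along $\chi$).

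Since $V \cong \Sym^2(V_3)$ as a $\GL(V_3)$-module, the generator $\rho = \diag(\omega^{m_0}, \ldots, \omega^{m_5})$ factors through $G_1$ if and only if the weight multiset $\{m_0, \ldots, m_5\}$ equals $\{2a_1, 2a_2, 2a_3, a_1+a_2, a_1+a_3, a_2+a_3\}$ modulo $p$ for some $a_1, a_2, a_3 \in \ZZ/p$; in addition, the character constraint $\lambda(\rho) = \lambda_1(\rho) = \omega^{4(a_1+a_2+a_3)}$ must hold. The analogous criterion for $G_2$ uses the decomposition $V \cong \Sym^4(V_2) \oplus \CC$ to produce the candidate weight multiset $\{4b_1, 3b_1+b_2, 2b_1+2b_2, b_1+3b_2, 4b_2, c\}$ modulo $p$, subject to the condition $(\det g)^2/u \in \mu_3$ defining $\widetilde{G_2}$ and to the requirement $\det(g)^{10} u = 1$ that places the image in $\SL(V)$. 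Both tests are finite combinatorial computations depending only on $(m_0, \ldots, m_5)$ and on the value $\lambda(\rho)$, and the latter can be read off from the action of $\rho$ on the explicit polynomial $F$ given in the classification theorem.

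For each of the thirteen types we perform this check. A useful geometric shortcut via the equivalent condition (ii) of Theorem \ref{theorem: criterion for bb} is to specialize the general $F \in \V_\lambda$ and test whether any specialization is $\SL(V)$-equivalent to a member of the family $g_{a, b}$ in \eqref{equation: chi}; for instance, a cubic preserved by $\rho$ admits a determinantal presentation exactly when $\rho$ acts on $V$ through $\Sym^2$ of a three-dimensional subrepresentation, and the $b \neq 0$ members of $\chi$ appear exactly when $\rho$ acts through $\Sym^4 \oplus \mathbf{1}$ of a two-dimensional one. The expected outcome is that the factorization exists precisely for $T_2^2, T_3^5, T_3^6, T_5^1, T_7^1$, giving $\Ha_* \neq \emptyset$; and fails for $T_2^1, T_2^3, T_3^2, T_3^3, T_3^4, T_3^7, T_{11}^1$, giving $\Ha_* = \emptyset$. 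In the latter case, the Looijenga compactification of Theorem \ref{theorem: two normalizations} reduces tautologically to the Baily-Borel compactification $\overline{\Gamma\bs \D}^{bb}$, which yields the asserted isomorphism $\overline{\F} \cong \overline{\Gamma\bs\D}^{bb}$.

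The principal obstacle is the bookkeeping in the case analysis: even when the weight multiset admits valid $(a_i)$ or $(b_i, c)$, the character $\lambda(\rho)$ must match $\lambda_i(\rho)$ for the factorization to descend from $\widetilde{G_i}$ with the correct character, and several borderline cases (notably among $p = 2$ types, where the weights are quite constrained) fail purely on the character test rather than on the weight test. A secondary technical point is correctly enforcing the $\SL(V)$-normalization $\det(g)^{10} u = 1$ that cuts out $G_2$ inside $\widetilde{G_2}$, which restricts which $(b_1, b_2, c)$ are actually realized by elements of $G_2$ of order $p$.
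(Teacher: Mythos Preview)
Your proposal is correct and follows essentially the same route as the paper: both apply condition (iii) of Theorem \ref{theorem: criterion for bb} and, for each prime-order type, test whether the diagonal generator $\rho$ (together with its character $\lambda$) arises from an element of $G_1$ via $\Sym^2(V_3)$ or of $G_2$ via $\Sym^4(V_2)\oplus\CC$. The paper's own proof is in fact terser than yours---it writes out only the $p=2$ case (finding that the $G_1$- and $G_2$-images are both conjugate to the weight pattern of $T_2^2$, with trivial $\lambda_i$) and declares the remaining primes ``similar,'' whereas you spell out the general weight-multiset and character-matching criteria explicitly before deferring the casework.
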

\begin{proof}
We will only do the calculation for $p=2$, the other cases are similar. If $(A,\lambda)$ factors through $(G_1, \lambda_1)$, then there exists $g\in \GL(V_3)$ with order $2$, such that the image of $g$ generates $\overline{A}$. We can choose basis of $V_3$, such that the matrix corresponds to $g$ is $\diag (1,-1, -1)$. The image of $g$ in $GL(V)$ is $\diag(1,1,1,1,-1,-1)$. If $(A,\lambda)$ factors through $(G_1, \lambda_1)$, then we can choose $(g, u)\in \GL(V_2)\otimes \CC^*$ such that $g^2=id$ and $(\det g^2)/u$ is a third root of unity. Under suitable basis, we have $g=\diag(1,-1)$. Then the image of $(g,u)$ in $\SL(V)$ is $\diag(1,1, -1,-1,-1,-1)$. In both two cases, the characters $\lambda_1$ and $\lambda_2$ are trivial. By theorem \ref{theorem: criterion for bb}, the symmetry type $T_2^2$ does not give Baily-Borel compactification and $T_2^1, T_2^3$ give Baily-Borel compactifications.
\end{proof}
\subsection{Examples revisit}
\label{subsection: examples revisit}
Take $T=T_3^1$, then $T=[(\overline{A}=\mu_3,\lambda=1)]$ satisfies condition \ref{condition: smooth and identification of groups}. The space $\F$ can be identified with the moduli space of smooth cubic threefolds. The local period domain $\D$ is a complex hyperbolic ball of dimension $10$ with an action of an arithmetic group $\Gamma$. Then theorem \ref{theorem: main1} and theorem \ref{theorem: main2} recover the main results in \cite{looijenga2007period} and \cite{allcock2011moduli}. By proposition \ref{proposition: whether bb}, the hyperplane arrangement $\Ha_*$ is nonempty. Actually, from \cite{looijenga2007period} and \cite{allcock2011moduli}, the quotients $\Gamma\bs\Ha_s$ has two irreducible components, and $\Gamma\bs\Ha_*$ is irreducible.

Take $T=T_2^1$, then $T=[(\overline{A}=\mu_2,\lambda=1)]$ satisfies condition \ref{condition: smooth and identification of groups}. In this case, the moduli space $\F$ turns out to be the moduli space of pairs consisting of a cubic threefold and a hyperplane section. This is recently studied in \cite{laza2017moduli}. Denote $\mathcal{W}_1=H^0(\PP^4, \mathcal{O}(3))$ the space of cubic forms in $x_0,\dots,x_4$ and $\mathcal{W}_2=H^0(\PP^4, \mathcal{O}(1))$ to be the space of linear forms in $x_0,\dots,x_4$. We have an identification $\mathcal{W}_1\oplus\mathcal{W}_2\cong\V$ sending $(L_3,L_1)$ to $L_3+x_5^2 L_1$. In their paper \cite{laza2017moduli}, the authors defined $\F$ to be a $\GIT$-quotient of $(\PP\mathcal{W}_1\times \PP\mathcal{W}_2, \mathcal{O}(3)\boxtimes \mathcal{O}(1))$ by $\SL(5,\CC)$.  Direct calculation shows that $N=C=\SL(5,\CC)\times Z\subset\SL(V)$, where $Z=\{diag(u,u,u,u,u,u^{-5})\big{|}u\in\CC^{\times}\}$ is the center. The following proposition gives the relation of our constructions with that in \cite{laza2017moduli}:
\begin{prop}
We have identification between polarized projective varieties:
\begin{equation*}
Z\dbs (\PP\V,\mathcal{O}(1))\cong (\PP\mathcal{W}_1\times \PP\mathcal{W}_2, \mathcal{O}(3)\boxtimes \mathcal{O}(1)).
\end{equation*}
\end{prop}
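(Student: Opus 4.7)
My approach is to compute the graded ring of $Z$-invariants in the homogeneous coordinate ring of $\PP\V$ and identify it with the bihomogeneous section ring of $\mathcal{O}(3)\boxtimes\mathcal{O}(1)$ on $\PP\mathcal{W}_1\times \PP\mathcal{W}_2$. Since $Z$ is a one-dimensional torus, this reduces to a clean weight computation, after which taking $\Proj$ yields the scheme-level isomorphism, and a degree/shift check pins down the polarization.

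First I would make the $Z$-action on $\V=\mathcal{W}_1\oplus\mathcal{W}_2$ explicit. For $u\in\CC^\times$, the element $g=\diag(u,u,u,u,u,u^{-5})$ acts on $F\in\Sym^3(V^*)$ via $F\circ g^{-1}$. Applied to $F=L_3(x_0,\dots,x_4)+x_5^2\,L_1(x_0,\dots,x_4)$, a direct substitution yields
\[
g\cdot(L_3,L_1)\;=\;(u^{-3}L_3,\,u^{9}L_1),
\]
so dually $Z$ acts with weight $3$ on $\mathcal{W}_1^*$ and weight $-9$ on $\mathcal{W}_2^*$. Expanding
\[
\Sym^m(\V^*)\;=\;\bigoplus_{i+j=m}\Sym^i(\mathcal{W}_1^*)\otimes\Sym^j(\mathcal{W}_2^*),
\]
each summand has pure $Z$-weight $3i-9j$, so the invariants are exactly those with $i=3j$, forcing $m=4j$. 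This produces a ring isomorphism
\[
\bigoplus_{m}\Sym^m(\V^*)^Z\;\cong\;\bigoplus_{j}\Sym^{3j}(\mathcal{W}_1^*)\otimes\Sym^{j}(\mathcal{W}_2^*),
\]
and the right-hand side is precisely the section ring of $\mathcal{O}(3)\boxtimes\mathcal{O}(1)$ on $\PP\mathcal{W}_1\times\PP\mathcal{W}_2$. Taking $\Proj$ of both sides identifies $Z\dbs\PP\V$ with $\PP\mathcal{W}_1\times\PP\mathcal{W}_2$ as schemes.

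The main subtlety I expect is the polarization bookkeeping: the invariant ring is concentrated in degrees divisible by $4$, so $\mathcal{O}_{\PP\V}(1)$ itself does not descend, but $\mathcal{O}_{\PP\V}(4)$ does, and its descent is precisely $\mathcal{O}(3)\boxtimes\mathcal{O}(1)$ because the degree-$4$ invariants form $\Sym^3(\mathcal{W}_1^*)\otimes \mathcal{W}_2^*=H^0(\mathcal{O}(3)\boxtimes\mathcal{O}(1))$. Under the standard $\GIT$ convention of polarizing the quotient by the smallest descending power of the linearization, this matches the two polarizations in the statement. As a sanity check, one can verify directly that the $Z$-semistable locus in $\PP\V$ is the complement of $\PP\mathcal{W}_1\cup\PP\mathcal{W}_2$ (points with $L_3=0$ or $L_1=0$ have $0$ in their orbit closure) and that the orbit map $[L_3:L_1]\mapsto([L_3],[L_1])$ realizes the $\GIT$ quotient on the level of points, confirming the scheme-theoretic identification above.
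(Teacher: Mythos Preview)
Your proof is correct and follows essentially the same route as the paper: both compute the $Z$-invariants in $\bigoplus_m\Sym^m(\V^*)$ by decomposing each $\Sym^m(\V^*)$ into the bihomogeneous pieces $\Sym^i(\mathcal{W}_1^*)\otimes\Sym^j(\mathcal{W}_2^*)$, read off the $Z$-weight, and find that invariance forces $i=3j$, $m=4j$, yielding the section ring of $\mathcal{O}(3)\boxtimes\mathcal{O}(1)$. Your additional remarks on the descent of $\mathcal{O}(4)$ and the explicit description of the semistable locus are not in the paper (which is terser) but are consistent and helpful sanity checks.
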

\begin{proof}
It is equivalent to show
\begin{equation*}
\bigoplus_k (H^0(\PP\V,\mathcal{O}(k)))^Z\cong \bigoplus_k H^0(\PP\mathcal{W}_1\times \PP\mathcal{W}_2, \mathcal{O}(3k)\boxtimes \mathcal{O}(k))
\end{equation*}
as graded algebras. The action of $Z$ on $\mathcal{W}_1$ has weight $3$, and on $\mathcal{W}_2$ weight $-9$.

We have the direct sum decomposition
\begin{equation*}
\Sym^{m}(\V^*)=\bigoplus_{k+l=m} \Sym^{k}{\mathcal{W}_1^*}\otimes \Sym^{l}{\mathcal{W}_2^*}
\end{equation*}
with $Z$-action of weight $-3k+9l$. The weight zero part has $k=3l$ and $m=4l$. So we obtain identification of the two polarized varieties.
\end{proof}
Moreover, by proposition \ref{proposition: whether bb}, the hyperplane arrangement $\Ha_*$ is empty in this case, and we obtain identification between $\overline{\F}$ and Baily-Borel compactification $\overline{\Gamma\bs\D}^{bb}$. This recovers the main result in \cite{laza2017moduli}.


\begin{appendix}
\section{Locally Symmetric Varieties and Looijenga compactifications}
\label{section: appendix}
It is well-known that the normalization of each stratum in the orbifold loci of a locally Hermitian symmetric variety is still a locally Hermitian symmetric variety. For reader's convenience, we include a discussion of this fact in section \ref{subsection: orbifold locus of locally symmetric varieties}. In the rest of the appendix, we prove that similar result holds for Looijenga compactifications.

We will recall the construction of Looijenga compactifications of arithmetic quotients $\XX$ of complex hyperbolic balls or type $\IV$ domains. There are two steps. The first is the semi-toric blowup $\overline{\XX}^\Sigma$, which is an intermediate compactification of arithmetric quotient $\XX$ sitting between Baily-Borel and toroidal compactifications. We will recall the geometric construction of Baily-Borel compactifications of complex hyperbolic balls and type $\IV$ domains in section \ref{subsection: functoriality of baily-borel compactification}, and recall the semi-toric blow-up construction in section \ref{subsection: functoriality of semitoric blow up}. The second step is successive blow-up constructions along the hyperplane arrangement in $\overline{\XX}^\Sigma$ and blow-down construction of certain induced strata (We will sketch this in section \ref{subsection: main theorem in appendix}).

\subsection{Orbifold Loci of Locally Symmetric Varieties}
\label{subsection: orbifold locus of locally symmetric varieties}
In this section we show the normalization of an orbifold stratum of locally Hermitian symmetric variety is again locally Hermitian symmetric variety.

Let $G$ be a real reductive algebraic group with compact center. Let $K$ be a maximal compact subgroup of $G$. Let $\D=G/K$ be the corresponding symmetric space. Assume $\D$ is Hermitian symmetric and $G$ has a $\QQ$-structure. Let $\Gamma\subset G(\QQ)$ be an arithmetic subgroup. For simplicity, we assume the action of $\Gamma$ on $\D$ is faithful. Denote $\XX=\Gamma\bs \D$ to be the arithmetic quotient. This is naturally a quasi-projective variety due to Baily-Borel compactification (see \cite{borel1966}). There is a natural orbifold sturcture on $\XX$. We consider the orbifold locus indexed by certain finite subgroup $A\subset\Gamma$. More precisely, we take $A\subset \Gamma$ fixing some point $x\in \D$. Without loss of generality, we assume $K$ to be the stabilizer of $x\in \D$ under the action of $G$. Then $A\subset K$. Denote $G_A, K_A$ and $\Gamma_A$ to be the corresponding normalizers of $A$ in $G,K$ and $\Gamma$ respectively. Then $G_A$ is again a real reductive algebraic group with compact center and $K_A$ is a maximal compact subgroup (see \cite{looijenga2016moduli}, page 37-38). There is a natural holomorphic embedding
\begin{equation*}
G_A/K_A\hookrightarrow \D=G/K.
\end{equation*}
Define $\D_A\coloneqq G_A/K_A$. This is a Hermitian symmetric subspace of $\D$. We have the following proposition:
\begin{prop}
\label{proposition: normalization smooth case}
The group $\Gamma_A$ is an arithmetic subgroup in $G_A(\QQ)$ and the map $\pi\colon \Gamma_A\bs\D_A \longrightarrow \Gamma\bs\D$ is finite. Furthermore, if $A$ is the stabilizer of $x$ under the action of $\Gamma$, then this map gives a normalization of its image.
\end{prop}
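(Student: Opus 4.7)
The proof naturally splits into three pieces: arithmeticity of $\Gamma_A$, finiteness of $\pi$, and the normalization property when $A = \Gamma_x$. The first is formal, the second combines a fiber count with a properness argument, and the third uses the hypothesis to see that generic fibers are singletons.

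\textbf{Arithmeticity.} Since $A$ is a finite subgroup of $G(\QQ)$, it is $\QQ$-defined as a closed subscheme, so its normalizer $G_A = N_G(A)$ is a $\QQ$-algebraic subgroup of $G$. Then $\Gamma_A = \Gamma \cap G_A(\QQ)$ is arithmetic in $G_A$ by the standard fact that intersections of arithmetic groups with $\QQ$-algebraic subgroups are arithmetic.

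\textbf{Finiteness.} For finite fibers, I would show the fiber of $\pi$ over $[z] \in \Gamma \bs \D$ (with $z \in \D_A$) is in bijection with $\Gamma_A \bs \{g \in \Gamma : g^{-1}Ag \subset \Gamma_z\} / \Gamma_z$; since the finite group $\Gamma_z$ contains only finitely many subgroups conjugate to $A$ and, for each such conjugate $A' = h^{-1}Ah$ with $h \in \Gamma_z$, the equation $g^{-1}Ag = A'$ cuts out the single coset $\Gamma_A h$, the fiber is finite. For properness, given $\{[y_n]\} \subset \Gamma_A \bs \D_A$ whose image in $\Gamma \bs \D$ converges to $[z]$, I would lift to $g_n \in \Gamma$ with $g_n^{-1}y_n \to z$; proper discontinuity of $\Gamma$ ensures $g_n^{-1}A g_n$ takes only finitely many values as $n$ grows, so after a subsequence $g_n^{-1}Ag_n = A' \subset \Gamma_z$ is constant. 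Writing $A' = g^{-1}Ag$ with $g \in \Gamma$, the identity $(gg_n^{-1})A(gg_n^{-1})^{-1} = A$ gives $gg_n^{-1} \in \Gamma_A$, whence $[y_n] = [gg_n^{-1}y_n] \to [gz]$ in $\Gamma_A \bs \D_A$.

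\textbf{Normalization and principal obstacle.} Assuming $A = \Gamma_x$, proper discontinuity implies that any $\gamma \in \Gamma$ not fixing $x$ fails to fix all sufficiently nearby points, so for $y \in \D_A$ close to $x$ one has $\Gamma_y \subset \Gamma_x = A$; since $y \in \D_A = \D^A$ forces $A \subset \Gamma_y$, equality holds on a dense open subset $\D_A^{\circ}$ of $\D_A$. For $y \in \D_A^{\circ}$, any $g \in \Gamma$ with $gy \in \D_A$ satisfies $g^{-1}Ag \subset \Gamma_y = A$, whence $g \in \Gamma_A$; so the $\pi$-fiber over generic points of the image is a single point, and $\pi$ is birational onto its image. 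Since $\Gamma_A \bs \D_A$ is normal (a finite quotient of the smooth space $\D_A$), finiteness plus birationality yields the normalization statement. The most delicate step I anticipate is the properness argument, where one must simultaneously extract a subsequence making $g_n^{-1}y_n$ convergent in $\D$ and a further subsequence stabilizing $g_n^{-1}Ag_n$ to a fixed subgroup, both controlled by proper discontinuity near $z$; the rest of the proof is bookkeeping.
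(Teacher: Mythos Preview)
Your proof is correct, and the arithmeticity, quasi-finiteness, and normalization portions match the paper's argument essentially line for line: finite fibers come from counting $\Gamma$-conjugates of $A$ inside the (finite) stabilizer $\Gamma_y$, and generic injectivity comes from the observation that $\Gamma_y = A$ on a dense open subset of $\D_A$, forcing any $\rho \in \Gamma$ identifying two generic points to normalize $A$.

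The genuine difference is in how you establish properness. The paper does not argue directly: it invokes the extension theorem of Kobayashi--Ochiai (theorem~2 in \cite{kobayashi1972}) to extend $\pi$ to a morphism of Baily--Borel compactifications, which yields algebraicity and properness in one stroke. Your sequential argument via proper discontinuity---extracting a subsequence on which $g_n^{-1}Ag_n$ is constant and then translating back into $\Gamma_A$---is more elementary and entirely self-contained, and it correctly gives topological properness. What the paper's approach buys is that algebraicity of $\pi$ comes for free, whereas your route establishes $\pi$ as a proper holomorphic map with finite fibers and would, strictly speaking, require a separate appeal (e.g.\ to Borel's extension theorem or to the same Baily--Borel extension) to upgrade ``finite'' to the algebraic sense. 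For the normalization conclusion this is harmless, since finite plus birational from a normal analytic space already gives the analytic normalization, and the algebraic statement then follows by GAGA once one knows the target image is quasi-projective.
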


\begin{proof}
Due to the extension theorem of Baily-Borel compactifications (see theorem 2 in \cite{kobayashi1972}), the map $\pi$ is algebraic and proper. We show $\pi$ is finite. It suffices to show $\pi$ is quasi-finite, namely, having finite fibers. Take any $y\in \D_A$. Suppose we have a point $y^{\prime}=\rho y$ for $\rho\in \Gamma$. Then $\rho^{-1} A\rho$ is contained in the stabilizer group of $y$. Actually, the $\Gamma_A$-orbits of such points $y^{\prime}$ are one-to-one corresponding to subgroups with form $\rho^{-1}A\rho$ in the stabilizer group of $y$, hence finitely many.

If $A$ is the stabilizer group of $x$, a generic point in $\XX_A\coloneqq \Gamma_A\bs\D_A$ also has $A$ as stabilizer group. We first show that $\pi$ is generically injective in this case. Take generically $x_1,x_2\in \D_A$, and assume they $[x_1]=[x_2]$ in $\Gamma\bs \D$. Then there exists $\rho\in \Gamma$ such that $\rho x_1=x_2$. Since both $x_1,x_2$ have stabilizer group $A$, we have $\rho A\rho^{-1}=A$, hence $\rho\in \Gamma_A$. This implies that $[x_1]=[x_2]$ in $\Gamma_A\bs \D_A$. We have $\pi$ a finite and birational morphism from a normal variety to its image, hence a normalization of its image.
\end{proof}

\begin{rmk}
The same construction also works for any finite volume locally Hermitian symmetric varieties. The difference from the arithmetic case is that $\Gamma_A$ is not automatically a lattice. We need to use the compactification in finite volume case (see theorem 1 in \cite{mok1989compactifying}) to show that the orbifold locus also admits a compactification, which implies the finiteness of the volume by Yau's Schwarz lemma (\cite{yau1978schwarz}).
\end{rmk}

\subsection{Orbifold Loci of Ball and Type $\IV$ Quotients}
\label{subsection: orbifold loci of ball/typefour quotients}
We now focus on arithmetic quotients of balls and type $\IV$ domains.

We fix the notation that will be used in the rest of the appendix. Let $(V_{\QQ}, \varphi)$ be a vector space over $\QQ$ with nondegenerate rational bilinear form $\varphi$ of signature $(2,N)$. Let $V=V_{\QQ}\otimes \CC$. Notice that here $V_{\QQ}$ is not necessarily the middle cohomology of cubic fourfold. Similar as section \ref{section: review of theory of cubic fourfolds}, the type $\IV$ domain $\widehat{\D}$ associated to $(V_{\QQ}, \varphi)$ is a component of
\begin{equation*}
\widehat{\D}\sqcup \overline{\widehat{\D}}=\PP\{x\in V\big{|}\varphi(x,x)=0, \varphi(x, \overline{x})>0\}.
\end{equation*}
Denote by $\widehat{G}$ the subgroup of $\Aut(\varphi)(\RR)$ (of index $2$) respecting the component $\widehat{\D}$. Let $\widehat{\Gamma}\subset \widehat{G}$ be an arithmetic subgroup. The corresponding locally Hermitian symmetric variety is $\widehat{\XX}=\widehat{\Gamma}\bs \widehat{\D}$. Let $A$ be a finite subgroup of $\widehat{\Gamma}$. Let $\zeta$ be a character of $A$, such that there exists $x\in V$ with $\varphi(x,x)=0$ and $\varphi(x,\overline{x})>0$, and $a(x)=\zeta(a)x$ for all $a\in A$. Denote $V_{\zeta}$ to be the $\zeta$-subspace of $V$. Then there is a natural Hermitian form $h$ on $V_{\zeta}$ defined by $h(x,y)=\varphi(x,\overline{y})$. If $\zeta=\overline{\zeta}$, this Hermitian form has signature $(2,n)$ and we obtain a type $\IV$ subdomain $\D$ of $\widehat{\D}$. Otherwise the signature is $(1,n)$ and we obtain a complex hyperbolic ball $\B$ inside $\widehat{\D}$. Indeed, let \begin{equation*}
G\coloneqq \{g\in\widehat{G})| gAg^{-1}=A\}
\end{equation*}
be an algebraic subgroup over $\QQ$. The fixed locus of $A$ in $\D$ is $G(\RR)/K$, where $K$ is maximal compact subgroup of $G(\RR)$. Denote $\Gamma=\{\rho\in\widehat{\Gamma}\big{|}\rho^{-1}A\rho=A\}$. The same as section \ref{section: local period map}, we have $\Gamma$ an arithmetic subgroup of $G(\QQ)$ acting on $\B$ or $\D$. Then we have a natural map $\Gamma\bs \D\longrightarrow \widehat{\Gamma}\bs\widehat{\D}$ or $\Gamma\bs \B\longrightarrow \widehat{\Gamma}\bs\widehat{\D}$. We consider the following condition:
\begin{cond}
\label{condition: arithmetic quotient part}
The group $A$ is the stabilizer of a generic point of $\D$ or $\B$.
\end{cond}
If $A$ satisfies this condition, proposition \ref{proposition: normalization smooth case} implies that the morphism $\pi\colon \Gamma\bs\B\longrightarrow \widehat{\Gamma}\bs\widehat{\D}$ or $\pi\colon \Gamma\bs\D\longrightarrow \widehat{\Gamma}\bs\widehat{\D}$ is the normalization of its image.

We will consider a larger set of type $\IV$ subdomains. Take $W_{\QQ}$ to be a $\QQ$-subspace of $V_{\QQ}$ with signature $(2,n)$, we have the associated type $\IV$ subdomain $\D$ inside $\widehat{\D}$ with the action of an arithmetic group $\Gamma_W=\{\rho\in\widehat{\Gamma}\big{|}\rho(W)=W\}$. Take $V_{\ZZ}$ to be an integral structure on $V_{\QQ}$ such that $\Gamma\subset \Aut(V_{\ZZ})$ has finite index. Denote $W_{\ZZ}\coloneqq W_{\QQ}\cap V_{\ZZ}$. For $x\in \D$, define $\Pic(x)\coloneqq V_x^{1,1}\cap V_\ZZ$ to be the Picard lattice of $x$ where $x$ is viewed as a weight two Hodge structure on $V_{\ZZ}$. Then for generic $x\in \D$, we have $\Pic(x)=W_{\ZZ}^{\perp}$.

We have the following lemma:
\begin{lem}
For $A$ satisfying condition \ref{condition: arithmetic quotient part} and $W=V_{\zeta}$, we have $\Gamma_A=\Gamma_W$.
\end{lem}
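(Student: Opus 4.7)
The plan is to prove the two inclusions $\Gamma_W\subseteq\Gamma_A$ and $\Gamma_A\subseteq\Gamma_W$ separately. Note that the hypothesis $W=V_\zeta$ being a $\QQ$-subspace forces $\zeta=\overline{\zeta}$, so $\zeta$ takes values in $\{\pm 1\}$ and $A$ acts on $W$ by the scalar character $\zeta$.

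For $\Gamma_W\subseteq\Gamma_A$, I would start from the observation that $A$ acts on $W$ by scalars and hence trivially on $\PP W$, so every point of $\D\subset\PP W$ is fixed by $A$. Thus $A\subseteq\mathrm{stab}_{\widehat{\Gamma}}(y)$ for every $y\in\D$, and by Condition~\ref{condition: arithmetic quotient part} equality holds on a Zariski-open subset of $\D$. Given $\rho\in\Gamma_W$, the element $\rho$ preserves both $W$ and $\widehat{\D}$, hence also preserves the component $\D=\widehat{\D}\cap\PP W$; choosing $y\in\D$ generic enough that $\rho y$ is also generic gives
\[
\rho A\rho^{-1}=\rho\,\mathrm{stab}_{\widehat{\Gamma}}(y)\,\rho^{-1}=\mathrm{stab}_{\widehat{\Gamma}}(\rho y)=A,
\]
so $\rho\in\Gamma_A$.

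For the reverse inclusion, given $\rho\in\Gamma_A$, a direct computation using that $\rho$ normalizes $A$ shows $\rho(V_\zeta)=V_{\zeta'}$ where $\zeta'(a):=\zeta(\rho^{-1}a\rho)$ is another character of $A$, and the remaining goal is to conclude $\zeta'=\zeta$. I would argue this by a signature count: the character eigenspaces $V_\chi$ are mutually $h$-orthogonal (a consequence of the $A$-invariance of $\varphi$), so the signatures $(p_\chi,q_\chi)$ of $h|_{V_\chi}$ partition the total signature $(2,N)$. Since $h|_{V_\zeta}$ has signature $(2,n)$, the full positive part of $h$ sits in $V_\zeta$, forcing $p_\chi=0$ for every $\chi\neq\zeta$. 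But $\rho$ is rational and preserves $\varphi$, hence it commutes with complex conjugation and preserves $h$; this forces $h|_{V_{\zeta'}}$ again to have positive part of dimension $2$, and by the uniqueness just observed $V_{\zeta'}=V_\zeta$, so $\rho\in\Gamma_W$.

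The main obstacle is the second inclusion, specifically the identification $\zeta'=\zeta$; the key input is the rigidity coming from $V_\zeta$ being the unique eigenspace on which $h$ can carry a positive contribution. This is the same kind of argument already used in Section~\ref{subsection: global period map} to show that elements of $\widehat{\Gamma}$ normalizing $\overline{A}$ automatically preserve the character $\zeta$.
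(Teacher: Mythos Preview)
Your proof is correct and matches the paper's approach: the paper gives essentially your generic-stabilizer argument for $\Gamma_W\subseteq\Gamma_A$, and declares the reverse inclusion $\Gamma_A\subseteq\Gamma_W$ ``straightforward'' (your signature argument being the natural justification, as you correctly note, already implicit in Section~\ref{subsection: global period map}). The only stylistic difference is that the paper requires only the single point $x$ to be generic and concludes $\rho^{-1}A\rho\subset\mathrm{stab}(x)=A$ together with $|A|=|\rho^{-1}A\rho|$, rather than also requiring $\rho y$ generic.
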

\begin{proof}
It is straightforward that  $\Gamma_A\subset\Gamma_W$, and they both act on $\D$. Take any $\rho\in \Gamma_W$ and a generic point $x$ in $\D$. Then $A$ is contained in the stabilizer group of $\rho x$. Thus both $A$ and $\rho^{-1} A\rho$ are contained in the stabilizer group of $x$. Since $x$ is generic, we have $\rho^{-1}A \rho=A$ by condition \ref{condition: arithmetic quotient part}. So $\rho\in \Gamma_A$. We showed that $\Gamma_W\subset\Gamma_A$.
\end{proof}

With this lemma, we will simply denote $\Gamma$ to be the arithmetic group acting on $\D$. We have:
\begin{prop}
For any $\QQ$-subspace $W_{\QQ}$ (of $V_{\QQ}$) with signature $(2,n)$, we have a morphism $\pi\colon\Gamma\bs\D\longrightarrow \widehat{\Gamma}\bs\widehat{\D}$, which is the normalization of its image.
\end{prop}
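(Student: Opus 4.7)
The plan is to mimic the proof of proposition 6.2, verifying in turn that $\pi\colon\Gamma\bs\D\to\widehat{\Gamma}\bs\widehat{\D}$ is algebraic, proper, generically injective, and has finite fibers; these four properties together force $\pi$ to be a finite birational morphism from the normal variety $\Gamma\bs\D$ onto its (closed) image, and hence the normalization of that image (compare lemma \ref{lemma: uniqueness of semi-normalization}). The new input, compared with proposition 6.2, is that the rational subspace $W_\QQ$ plays the role of the eigenspace $V_\zeta$ attached to a finite group $A$; the genericity analysis is now driven by the identification $\Pic(x)=W_\ZZ^\perp$ for generic $x\in\D$ stated just above the proposition, rather than by condition \ref{condition: arithmetic quotient part}.

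I would begin with algebraicity and properness. The stabilizer $G_W=\{g\in\widehat{G}:g(W)=W\}$ is a reductive $\QQ$-algebraic subgroup of $\widehat{G}$, so $\Gamma=\widehat{\Gamma}\cap G_W(\QQ)$ is arithmetic in $G_W$ and $\Gamma\bs\D$ is quasi-projective via Baily--Borel. The inclusion $\D\hookrightarrow\widehat{\D}$ is $\Gamma$-equivariant and induces the analytic map $\pi$; the extension theorem of Kobayashi cited in proposition \ref{proposition: normalization smooth case} then yields an algebraic extension of $\pi$ to the Baily--Borel compactifications, from which $\pi$ inherits both algebraicity and properness.

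Generic injectivity is the cleanest step. For $x\in\D$ with $\Pic(x)=W_\ZZ^\perp$, any $\rho\in\widehat{\Gamma}$ with $\rho x\in\D$ must satisfy $\rho^{-1}(W_\QQ^\perp)\subset V_x^{1,1}\cap V_\QQ=W_\QQ^\perp$; matching $\QQ$-dimensions then forces $\rho(W)=W$, i.e.\ $\rho\in\Gamma$. Hence $\pi$ is injective on the Zariski-open locus of $\Gamma\bs\D$ parametrizing generic Picard lattice.

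The main obstacle is quasi-finiteness at non-generic points. At an arbitrary $x\in\D$ the fiber $\pi^{-1}([x]_{\widehat{\Gamma}})$ is naturally in bijection with $\Gamma$-orbits on the set of primitive sublattices of $\Pic(x)$ lying in the $\widehat{\Gamma}$-orbit of $W_\ZZ^\perp$; these are all primitive embeddings of the fixed negative-definite lattice $W_\ZZ^\perp$ into the fixed lattice $\Pic(x)$, and the number of such embeddings is finite by classical lattice theory (equivalently by Borel--Harish-Chandra reduction applied to $G_W$). Combined with properness from the first step, $\pi$ is proper and quasi-finite, hence finite; combined with the generic injectivity of the second step and the normality of $\Gamma\bs\D$, lemma \ref{lemma: uniqueness of semi-normalization} (applied to $\pi$ and the identity on a Zariski-open subset of the image) exhibits $\pi$ as the normalization of its closed image $\pi(\Gamma\bs\D)$.
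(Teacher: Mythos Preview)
Your proposal is correct and follows essentially the same route as the paper's own proof: properness via the Kobayashi--Ochiai/Borel extension to Baily--Borel compactifications, generic injectivity via the identification $\Pic(x)=W_\ZZ^\perp$ for generic $x\in\D$, and quasi-finiteness by bounding the fiber by the number of primitive embeddings of the definite lattice $W_\ZZ^\perp$ into $\Pic(x)$. The only cosmetic difference is that the paper makes the finiteness of primitive embeddings concrete by bounding the discriminant of the orthogonal complement of $W_\ZZ^\perp$ in $\Pic(x)$ and invoking reduction theory, whereas you cite classical lattice theory (or Borel--Harish-Chandra) in one line; both are valid.
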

\begin{proof}
Properness is by \cite{kobayashi1972}. Take a generic point $x$ in $\D$. Suppose $\rho\in \widehat{\Gamma}$ sends $x$ to $\rho x\in \D$. The Picard lattice $\Pic(\rho x)$ of $\rho x$ contains $W_{\ZZ}^{\perp}$, hence $\rho^{-1}(W_{\ZZ}^{\perp})\subset \Pic(x)$. Since $x$ is generic, we have $\Pic(x)=W_{\ZZ}^{\perp}$. This implies that $\rho(W_{\ZZ}^{\perp})=W_{\ZZ}^{\perp}$, hence $\rho(W)=W$. Thus $\rho\in \Gamma_W$.

Finally, we show finiteness. Take a point $x\in \D$. For any $\rho \in \widehat{\Gamma}$, we have $\rho^{-1}(W_{\ZZ}^{\perp})$ contained in the Picard lattice $\Pic(x)$. The set $\widehat{\Gamma}x$ is a disjoint union of some $\Gamma$-orbits, each of which corresponds to the image of certain primitive embedding of $W_{\ZZ}^{\perp}$ into $\Pic(x)$. The orthogonal complement of $W_{\ZZ}^{\perp}$ in $\Pic(x)$ is positive definite with discriminant at most $\mathrm{det}(W_{\ZZ}^{\perp})\mathrm{det}(\Pic(x))$. By reduction theory of lattice, there are finitely many such primitive embeddings.
\end{proof}

\subsection{Functoriality of Baily-Borel Compactification}
\label{subsection: functoriality of baily-borel compactification}
In this section we recall Baily-Borel compactifications of arithmetic quotients of complex hyperbolic balls or type $\IV$ domains. See \cite{borel1966} and \cite{looijenga2003ball,looijenga2003typefour}.

We deal with type $\IV$ domain $\widehat{\D}$ first. The boundary components of Baily-Borel compactifications corresponds to $\QQ$-isotropic planes $J$ or $\QQ$-isotropic lines $I$. Let
\begin{equation*}
\pi_{J^\perp}\colon \PP(V)-\PP(J^\perp)\longrightarrow \PP(V/J^\perp)
\end{equation*}
and
\begin{equation*}
\pi_{I^\perp}\colon \PP(V)-\PP(I^\perp)\longrightarrow \PP(V/I^\perp)
\end{equation*}
be the natural projections. The image $\pi_{J^\perp}\widehat{\D}$ is isomorphic to upper half plane. The image $\pi_{I^\perp}\widehat{\D}$ is a point. Adding rational boundary components, we have
\begin{equation*}
\widehat{\D}^{bb}\coloneqq \widehat{\D}\sqcup\coprod_J \pi_{J^\perp}\widehat{\D}\sqcup\coprod _I\pi_{I^\perp}\widehat{\D}
\end{equation*}
with suitable topology and ringed space structure. The Baily-Borel compactification is the quotient $\Gamma\bs \widehat{\D}^{bb}$ as a projective variety.

 Given $W_{\QQ}\subset V_{\QQ}$ with signature $(2,n)$. Let $\D$ be the corresponding type $\IV$ domain. We have a natural map from $\D$ to $\widehat{\D}$, inducing $\Gamma\bs \D\longrightarrow \widehat{\Gamma}\bs\widehat{\D}$. According to theorem 2 in \cite{kobayashi1972}, this holomorphic map can be extended to Baily-Borel compactifications, sending boundary components into boundary components.

\begin{prop}[type $\IV$ to type $\IV$]
There is a natural finite extension of $\pi\colon\Gamma\bs \D\longrightarrow \widehat{\Gamma}\bs\widehat{\D}$ to Baily-Borel compactifications
\begin{equation*}
\pi\colon \overline{\Gamma\bs\D}^{bb}\longrightarrow \overline{\widehat{\Gamma}\bs\widehat{\D}}^{bb}.
\end{equation*}
If $A$ satisfies condition \ref{condition: arithmetic quotient part}, the map is a normalization of its image.
\end{prop}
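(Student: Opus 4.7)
The plan is to construct the extension from the inclusion of rational boundary data, promote it to a morphism via Kobayashi's extension theorem, and then verify finiteness (and, under condition~\ref{condition: arithmetic quotient part}, the normalization property) stratum by stratum.

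First, any $\QQ$-isotropic line $I\subset W_{\QQ}$ (respectively any $\QQ$-isotropic plane $J\subset W_{\QQ}$) is automatically $\QQ$-isotropic in $V_{\QQ}$, and the projections $\pi_{I^{\perp}}$ and $\pi_{J^{\perp}}$ cutting out the rational boundary components of $\widehat{\D}$ restrict along $\D\hookrightarrow\widehat{\D}$ to the corresponding projections for $\D$. This gives a set-theoretic map $\D^{bb}\to\widehat{\D}^{bb}$ compatible with $\Gamma\hookrightarrow\widehat{\Gamma}$, and by Kobayashi's extension theorem (theorem 2 in \cite{kobayashi1972}) the induced holomorphic map on the quotients extends to an algebraic morphism $\pi\colon\overline{\Gamma\bs\D}^{bb}\to\overline{\widehat{\Gamma}\bs\widehat{\D}}^{bb}$ between projective varieties; properness is automatic.

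For finiteness I would prove $\pi$ is quasi-finite on each Baily-Borel stratum. Quasi-finiteness on the open stratum $\Gamma\bs\D$ is the content of the preceding proposition. On a cusp of $\overline{\widehat{\Gamma}\bs\widehat{\D}}^{bb}$ corresponding to a $\widehat{\Gamma}$-orbit of a $\QQ$-isotropic subspace $K\subset V_{\QQ}$ (a line or a plane), the preimage under $\pi$ is parametrized by pairs of a $\QQ$-isotropic $K^{\prime}\subset W_{\QQ}$ with $\rho K^{\prime}=K$ for some $\rho\in\widehat{\Gamma}$ together with a point in the associated cusp of $\overline{\Gamma\bs\D}^{bb}$. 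By lattice reduction theory applied to $(K^{\prime})^{\perp}\cap W_{\ZZ}$, in the same spirit as the argument at the end of section~\ref{subsection: orbifold loci of ball/typefour quotients}, only finitely many $\Gamma$-orbits of such $K^{\prime}$ lie in a given $\widehat{\Gamma}$-orbit, and on each cusp of $\overline{\Gamma\bs\D}^{bb}$ the restricted map has finite fibers, so the total fiber of $\pi$ is finite.

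For the normalization statement, $\overline{\Gamma\bs\D}^{bb}$ is a normal projective variety by the Baily-Borel theorem. Under condition~\ref{condition: arithmetic quotient part}, the preceding proposition identifies $\Gamma\bs\D$ with the normalization of its image in $\widehat{\Gamma}\bs\widehat{\D}$, so $\pi$ is birational onto its image on the open stratum. Since $\Gamma\bs\D$ is dense and Zariski-open in $\overline{\Gamma\bs\D}^{bb}$, the image of $\pi$ is the Zariski closure of the image of $\Gamma\bs\D$, and lemma~\ref{lemma: uniqueness of semi-normalization} delivers the normalization property. The main obstacle will be the reduction-theoretic step on the boundary cusps: a $\widehat{\Gamma}$-orbit of rational isotropic subspaces of $W_{\QQ}$ need not be a single $\Gamma$-orbit, and bounding the number of $\Gamma$-orbits requires controlling the discriminants of the positive definite lattices $(K^{\prime})^{\perp}\cap W_{\ZZ}$ as $K^{\prime}$ ranges over preimages of $K$.
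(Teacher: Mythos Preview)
Your outline is correct and matches the paper's approach for the construction of the extension (match rational isotropic subspaces of $W_{\QQ}$ with those of $V_{\QQ}$, invoke the Kobayashi--Ochiai extension theorem) and for the normalization claim (finite, generically injective, normal source). The only substantive divergence is in how you argue finiteness on the boundary, and there you have made the problem harder than it is.

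Your anticipated ``main obstacle'' --- bounding via reduction theory the number of $\Gamma$-orbits of isotropic $K'\subset W_{\QQ}$ lying over a given $\widehat{\Gamma}$-orbit --- is not an obstacle at all: $\overline{\Gamma\bs\D}^{bb}$ has only finitely many boundary strata in total, so a fortiori only finitely many map to any fixed target cusp. What actually needs justification is the finiteness of the restricted map on each one-dimensional boundary stratum, which you assert but do not argue. The paper's argument here is clean and worth noting: for a rational isotropic plane $J\subset W_{\QQ}$, the inclusion $W\hookrightarrow V$ induces an \emph{isomorphism} $W/(J^{\perp}\cap W)\cong V/J^{\perp}$ (both sides are two-dimensional and the map is injective), so $\pi_{J^{\perp}}\D\to\pi_{J^{\perp}}\widehat{\D}$ is an isomorphism of upper half planes. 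After quotienting by the arithmetic stabilizers of $J$ on each side, one obtains a non-constant holomorphic map between modular curves, which is automatically finite. The zero-dimensional cusps (isotropic lines) are points, so there is nothing to check. No lattice reduction is needed anywhere on the boundary.
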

\begin{proof}
Let $W\coloneqq V_\zeta$ in this proof. The boundary components of $\D^{bb}$ correspond to rational isotropic planes $J$ and rational isotropic lines $I$ in $W$. From the natural embedding $W\hookrightarrow V$, they also have associated boundary components in $\widehat{\D}^{bb}$. Under the following natural commutative diagram
\begin{equation*}
\begin{tikzcd}
\PP(W)-\PP(J^{\perp}) \arrow{r}{\pi_{J^{\perp}}}\arrow{d}  &\PP(W/J^{\perp})\arrow{d}\\
\PP(V)-\PP(J^{\perp}) \arrow{r}{\pi_{J^{\perp}}}  &\PP(V/J^{\perp})
\end{tikzcd}
\end{equation*}
we have isomorphisms $\pi_{J^\perp}\D\longrightarrow \pi_{J^\perp}\widehat{\D}$, and similar maps $\pi_{I^\perp}\D\longrightarrow \pi_{I^\perp}\widehat{\D}$, which induce an extension $\D^{bb}\longrightarrow \widehat{\D}^{bb}$ equivariant under the action of $\Gamma\longrightarrow \widehat{\Gamma}$. After taking quotients, we have an extension map $\XX^{bb}\longrightarrow \widehat{\XX}^{bb}$. By proposition \ref{proposition: normalization smooth case}, this map is generically injective and it is finite over $\widehat{\Gamma}\bs\widehat{\D}$. Let $\Gamma_J$ be the stabilizer of $J$ under the action of $\Gamma$. The projection of $\Gamma$ in $\GL(J)$ (or equivalently $\GL(V/J^\perp)$) is arithmetic. The boundary component corresponding to $J$ is the quotient of $\pi_{J^\perp}\widehat{\D}$ by $\Gamma_J$, hence a modular curve. The restriction to the boundary component corresponding to each $J$ is a non-constant map between modular curves, hence finite. The restriction to boundary components corresponding to each $I$ is automatically finite. So we have an algebraic finite morphism between normal varieties $\XX^{bb}\longrightarrow \widehat{\XX}^{bb}$. If $A$ satisfies condition \ref{condition: arithmetic quotient part}, then this morphism is generically injective by proposition \ref{proposition: normalization smooth case}, hence a normalization of its image.
\end{proof}

We recall the Baily-Borel compactification of Ball quotient. Let $K$ be a CM field and $W_K$ a finite dimensional vector space over $K$ with
\begin{equation*}
h_K\colon W_K\times W_K\longrightarrow K
\end{equation*}
a $K$-valued Hermitian form. For each embedding $\iota\colon K\hookrightarrow \CC$, we define $W_{\iota}\coloneqq W_K\otimes_{\iota}\CC$, then we have a Hermitian form $h_{\iota}\colon W_{\iota}\times W_{\iota}\longrightarrow \CC$. Assume the form $h_{\iota}$ has signature $(1,n)$ under embedding $\iota=\iota_1$ or $\overline{\iota}_1$, and is definite otherwise. The complex ball $\B$ is defined to be the set of positive lines in $W_{\iota_1}$. The boundary components of Baily-Borel compactification correspond to $K$-isotropic lines $I$ in $W_K$ and we denote $\B^{bb}\coloneqq \B\sqcup\coprod_I \pi_{I^\perp}\B$. When the totally real part of $K$ is not $\QQ$, there exists complex embedding $\iota$ such that $(W_{\iota}, h_{\iota})$ is definite, which implies that any isotropic vector in $W_K$ must be zero. Thus in this case the boundary set is empty.

Now consider the action of $A$ on $V$ with $\zeta\ne\overline{\zeta}$. Let $K$ be the cyclotomic field generated by $\zeta(A)$. Take $W_K$ to be the $\zeta$-eigenspace of $V_K\coloneqq V_{\QQ}\otimes K$ under the action of $A$.
\begin{lem}
\label{lemma: W isotropic}
The $K$-vector space $W_K$ is isotropic under $\varphi$.
\end{lem}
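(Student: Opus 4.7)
The plan is to exploit the fact that $A$ preserves $\varphi$, together with the hypothesis $\zeta \neq \overline{\zeta}$. Write $\varphi$ also for its $K$-bilinear extension to $V_K$. For any $x, y \in W_K$ and any $a \in A$, we have $ax = \zeta(a) x$ and $ay = \zeta(a) y$. Since $A \subset \widehat{\Gamma} \subset \Aut(\varphi)$ acts by $\varphi$-isometries, the invariance $\varphi(x, y) = \varphi(ax, ay)$ gives
\begin{equation*}
\varphi(x, y) \;=\; \zeta(a)^2\, \varphi(x, y),
\end{equation*}
so $(\zeta(a)^2 - 1)\, \varphi(x,y) = 0$ for every $a \in A$.

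It therefore suffices to produce a single $a \in A$ with $\zeta(a)^2 \neq 1$. Since $A$ is finite, the values $\zeta(a)$ are roots of unity, and $\zeta = \overline{\zeta}$ is equivalent to $\zeta(A) \subset \{\pm 1\}$, which is equivalent to $\zeta(a)^2 = 1$ for all $a \in A$. The hypothesis $\zeta \neq \overline{\zeta}$ of this case thus furnishes some $a \in A$ with $\zeta(a)^2 \neq 1$, and we conclude $\varphi(x, y) = 0$ for all $x, y \in W_K$, i.e.\ $W_K$ is totally isotropic under $\varphi$.

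There is no real obstacle here; the argument is a one-line eigenvalue calculation on a bilinear form. The only thing worth double-checking is the equivalence $\zeta = \overline{\zeta} \Longleftrightarrow \zeta(A) \subset \mu_2$, which was already recorded in proposition \ref{proposition: hermitian form} and relies on the finiteness of $A$.
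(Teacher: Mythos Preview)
Your proof is correct and follows essentially the same approach as the paper: both compute $\varphi(x,y)=\varphi(ax,ay)=\zeta(a)^2\varphi(x,y)$ and then invoke the existence of some $a\in A$ with $\zeta(a)^2\neq 1$, which is equivalent to $\zeta\neq\overline{\zeta}$. Your write-up is slightly more explicit in justifying that last equivalence, but the argument is the same one-line eigenvalue calculation.
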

\begin{proof}
Take any $x,y\in W_K$, we need to show $\varphi(x,y)=0$. Take $a\in A$ such that $\zeta(a)$ is not real. Then $\zeta(a)^2\ne 1$. By
\begin{equation*}
\varphi(x,y)=\varphi(ax,ay)=\varphi(\zeta(a)x,\zeta(a)y)=\zeta(a)^2\varphi(x,y),
\end{equation*}
we have $\varphi(x,y)=0$.
\end{proof}
There is natural Hermitian form $h$ of signature $(1,n)$ on $W_K$, given by $h(x,y)=\varphi(x,\overline{y})$ for all $x,y\in W_K$. The Galois conjugates of $K$ define eigenspaces of $V$ under the action of $A$. The sum of all those eigenspaces is a subspace of $V$ defined over $\QQ$. Then we have the ball $\B$ consisting of positive lines in $W$ and we denote $\overline{(\Gamma\bs\B)}^{bb}\coloneqq \Gamma\bs \B^{bb}$ the Baily-Borel compactification of $\XX=\Gamma\bs \B$.
\begin{prop}[ball to type $\IV$]
There is a natural finite extension of $\pi\colon\Gamma\bs \B\longrightarrow \widehat{\Gamma}\bs\widehat{\D}$ to Baily-Borel compactifications
\begin{equation*}
\pi\colon \overline{\Gamma\bs\B}^{bb}\longrightarrow \overline{\widehat{\Gamma}\bs\widehat{\D}}^{bb}.
\end{equation*}
If $A$ satisfies condition \ref{condition: arithmetic quotient part}, the map is a normalization of its image.
\end{prop}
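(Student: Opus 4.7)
The strategy parallels the preceding type $\IV$ to type $\IV$ proof, with the new ingredient being a Galois-descent bookkeeping needed to match boundary components of $\B^{bb}$ (indexed by $K$-isotropic lines in $W_K$) with boundary components of $\widehat{\D}^{bb}$ (indexed by $\QQ$-isotropic lines and planes in $V_\QQ$).

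I would first dispose of the case where the totally real subfield of $K$ is strictly larger than $\QQ$. As remarked in the excerpt, $W_K$ then contains no nonzero isotropic vector, so $\B^{bb} = \B$ and $\Gamma\bs\B$ is already compact. The composition of the given finite morphism $\pi\colon \Gamma\bs\B \to \widehat{\Gamma}\bs\widehat{\D}$ with the open embedding $\widehat{\Gamma}\bs\widehat{\D}\hookrightarrow \overline{\widehat{\Gamma}\bs\widehat{\D}}^{bb}$ provides the required extension, and the normalization statement is inherited from the previous subsection.

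The essential case is $K$ imaginary quadratic. Existence of an extension $\pi\colon \overline{\Gamma\bs\B}^{bb}\to \overline{\widehat{\Gamma}\bs\widehat{\D}}^{bb}$ is immediate from Kobayashi's extension theorem (\cite{kobayashi1972}) applied to the algebraic morphism $\pi\colon \Gamma\bs\B \to \widehat{\Gamma}\bs\widehat{\D}$. To verify finiteness it suffices to analyze the map on each boundary stratum. I would attach to each $K$-isotropic line $I_K \subset W_K$ a $\QQ$-isotropic plane $J\subset V_\QQ$ as follows: the Galois conjugate $\overline{I_K}\subset \overline{W_K} = V_{\overline{\zeta}}$ is a second $K$-isotropic line, and the Galois-invariant $K$-subspace $I_K\oplus\overline{I_K}\subset V_K$ descends to a $\QQ$-plane $J\subset V_\QQ$. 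Writing $v$ for a generator of $I_K$, lemma \ref{lemma: W isotropic} gives $\varphi(v,v) = \varphi(\overline{v},\overline{v}) = 0$, while $\varphi(v,\overline{v}) = h(v,v) = 0$ since $v$ generates a null line in the Hermitian space $(W_K,h)$; moreover $v$ and $\overline{v}$ are $\QQ$-linearly independent because $\zeta\neq \overline{\zeta}$. This shows $J$ is a $\QQ$-isotropic plane, and the association is equivariant for $\Gamma\hookrightarrow\widehat{\Gamma}$. By continuity, the extended map sends the cusp of $\overline{\Gamma\bs\B}^{bb}$ attached to $I_K$ into the closure of the modular-curve boundary stratum of $\overline{\widehat{\Gamma}\bs\widehat{\D}}^{bb}$ attached to $J$. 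Since each cusp of $\overline{\Gamma\bs\B}^{bb}$ is a single point, finiteness on the boundary is automatic; combined with finiteness on the interior established in the preceding subsection, the entire map is finite.

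Under condition \ref{condition: arithmetic quotient part}, proposition \ref{proposition: normalization smooth case} gives generic injectivity in the interior, and since $\overline{\Gamma\bs\B}^{bb}$ is a normal projective variety and the extended map is finite and birational onto its image, it is the normalization of its image. The main technical point I foresee is confirming that Kobayashi's extension indeed realizes the described correspondence between cusps and modular-curve strata, rather than collapsing cusps into the deeper $0$-dimensional strata of $\overline{\widehat{\Gamma}\bs\widehat{\D}}^{bb}$; I would verify this by choosing analytic Siegel domain coordinates near each cusp in $\B^{bb}$ and tracing their image through the projections $\pi_{J^\perp}\colon \PP(V)-\PP(J^\perp)\to\PP(V/J^\perp)$ used in constructing $\widehat{\D}^{bb}$.
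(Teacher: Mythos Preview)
Your proposal is correct and follows essentially the same route as the paper: dispose of the non-quadratic case by compactness, and in the imaginary quadratic case attach to each $K$-isotropic line $I$ the $\QQ$-isotropic plane $J$ spanned by $I$ and $\overline{I}$, then check finiteness stratum by stratum and conclude normalization from generic injectivity under condition~\ref{condition: arithmetic quotient part}.

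The one difference worth flagging is how existence of the extension is obtained. You invoke Kobayashi's theorem and then worry, rightly, about verifying post hoc that cusps land in the modular-curve strata rather than the $0$-dimensional ones. The paper instead constructs the map $\B^{bb}\to\widehat{\D}^{bb}$ directly before taking quotients: once $J$ is exhibited as a rational isotropic plane, the cusp $\pi_{I^\perp}\B$ is sent to the boundary component $\pi_{J^\perp}\widehat{\D}$ by hand, and equivariance under $\Gamma\to\widehat{\Gamma}$ gives the extension on quotients. This direct construction sidesteps the technical verification you describe, so you may prefer it; your Siegel-domain check would also work but is more laborious.
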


\begin{proof}
Similar as the proof for type $\IV$ case, we need to identify the boundary components on both sides. The ball and its boundaries are defined as above by $W_K$. If $K$ is not a quadratic extension of $\QQ$, then the boundary set is empty, hence $\Gamma\bs\B$ is already compact. If $K$ is, then each $K$-isotropic line $I$ together with its complex conjugate $\overline{I}$ defines a rational isotropic plane in $V_\QQ$. So there is a natural extension map $\B^{bb}\longrightarrow \widehat{\D}^{bb}$ which is equivariant under the action of $\Gamma\longrightarrow \widehat{\Gamma}$. After taking quotient on both sides, we have a finite algebraic map $\pi\colon\XX^{bb}\longrightarrow \widehat{\XX}^{bb}$. If $A$ satisfies condition \ref{condition: arithmetic quotient part}, then this morphism is generically injective by proposition \ref{proposition: normalization smooth case}, hence a normalization of its image.
\end{proof}

\begin{rmk}
Similar construction of ball quotients appears in the arithmetic examples of Deligne-Mostow theory, see \cite{deligne1986monodromy} and \cite{looijenga2007uniformization}. In both constructions, if the cyclotomic field generated by the corresponding characters is not $\QQ(\sqrt{-1})$ or $\QQ(\sqrt{-3})$, then the Baily-Borel compactification is compact.
\end{rmk}

\subsection{Functoriality of Semi-toric Compactifications}
\label{subsection: functoriality of semitoric blow up}
We first briefly sketch the semi-toric blow-up constructions of complex hyperbolic balls and type $\IV$ domains with respect to certain hyperplane arrangements. See \cite{looijenga2003ball, looijenga2003typefour}. Semi-toric compactification with respect to a hyperplane arrangement is the minimal blowup of certain boundary components in Baily-Borel compactification, such that the closure of every hypersurface is Cartier at the boundary.

Let $\widehat{\Ha}$ be a hyperplane arrangement on $\widehat{\D}$ defined by a set of negative vectors $v\in V_\QQ$, which form finitely many orbits under the action of $\widehat{\Gamma}$.  We recall some definitions and notation in \cite{looijenga2003typefour}. Each rational isotropic line $I$ in $V_\QQ$ realizes $\widehat{\D}$ as a tube domain, with real cone denoted by
\begin{equation*}
C_I\subset (I^{\perp}/I\otimes I)(\RR).
\end{equation*}
Each rational isotropic plane $J$ determines a half line on the boundaries of the $C_I$ for any $I\subset J$. The union of these cones is called the conical locus of $\widehat{\D}$. Let $C_{I,+}$ be the convex hull of $\overline{C}_I\cap (I^{\perp}/I\otimes I )(\QQ)$, which is the union of $C_{I}$ with rational isotropic half lines corresponding to $J$ containing $I$. The hyperplane arrangement $\widehat{\Ha}$ determines an admissible decomposition $\Sigma(\widehat{\Ha})$ of the conical locus. More precisely, it is a $\Gamma$-invariant choice of locally rational cone decomposition of $C_{I,+}$ such that the support for isotropic half line corresponding to $J$ is independent of those $I\subset J$. See section 6 of \cite{looijenga2003typefour} for details. For each member $\sigma$ of $\Sigma(\widehat{\Ha})$ contained in $C_{I, +}$, we define a corresponding vector subspace $V_{\sigma}$ of $V$ as follows. When $\sigma$ is the half line corresponding to an isotropic plane $J$, then
\begin{equation*}
V_\sigma\coloneqq (\underset{J\subset H}{\bigcap} H)\cap J^{\perp}.
\end{equation*}
Otherwise $V_\sigma$ is the span of $\sigma$ in $I^{\perp}$, which is also the intersection among $I^\perp$ and those $H\in \widehat{\Ha}$ containing $I$. Here we identify $H\subset V$ with $H\in \widehat{\Ha}$. We have a projection $\pi_{\sigma}\colon \D\longrightarrow \PP(V/V_{\sigma})$. The semi-toric compactifications is denoted by $\overline{\XX}^\Sigma=\Gamma\bs \D^{\Sigma}$. Here $\D^{\Sigma}\coloneqq\coprod_{\sigma\in \Sigma} \pi_{\sigma}\widehat{\D}$. The space $\overline{\XX}^{\Sigma}$ has a natural map to $\widehat{\XX}^{bb}$ respecting the stratifications. We have two different types of boundary components. One is finite quotient of abelian torsor over the modular curve $\widehat{\Gamma}_J\bs \pi_{J^\perp}\widehat{D}$. The abelian torsor is modeled over vector group $J^\perp/V_\sigma$ quotient by a lattice. The other is algebraic torus torsor over a point $\pi_{I^\perp}\widehat{D}$, which is the boundary stratum in the quotient of an infinite-type toric variety induced by the cone decomposition of $C_{I,+}$. In particular, each cone of codimension $k$ corresponds to algebraic torus torsor of dimension $k$.

Given $W_{\QQ}\subset V_{\QQ}$ a sublattice of signature $(2,n)$, with $\D$ the associated type $\IV$ domain. We have the intersection $\Ha\coloneqq \D\cap \widehat{\Ha} $ a $\Gamma$-invariant hyperplane arrangement in $\D$. We also have the semi-toric blowup of $\D$ with respect to $\Ha$.

\begin{thm}[type $\IV$ to type $\IV$]
\label{theorem: semi-toric typefour}
There is a natural finite extension of $\pi\colon\Gamma\bs \D\longrightarrow \widehat{\Gamma}\bs\widehat{\D}$ to semi-toric compactifications
\begin{equation*}
\pi\colon \overline{\Gamma\bs\D}^{\Sigma(\Ha)}\longrightarrow \overline{\widehat{\Gamma}\bs\widehat{\D}}^{\Sigma(\widehat{\Ha})}.
\end{equation*}
If $A$ satisfies condition \ref{condition: arithmetic quotient part}, the map is a normalization of its image.
\end{thm}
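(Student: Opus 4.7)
The plan is to lift the Baily-Borel functoriality proved in the previous subsection through the semi-toric blow-up construction. Since the Baily-Borel extension $\pi\colon \overline{\Gamma\bs\D}^{bb}\to\overline{\widehat{\Gamma}\bs\widehat{\D}}^{bb}$ has already been shown to be finite, and since the semi-toric compactification $\overline{\XX}^{\Sigma}$ is obtained from the Baily-Borel compactification by blowing up rational boundary components in a manner dictated by the admissible cone decomposition $\Sigma$, the whole argument reduces to checking that the admissible decompositions on the two sides can be chosen compatibly and that the induced map is finite on each new boundary stratum.

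First I would verify compatibility of the combinatorial data on cones. Each rational isotropic line $I\subset W_\QQ$ is rational isotropic in $V_\QQ$, and the inclusion $(I^\perp\cap W)/I\hookrightarrow I^\perp/I$ induces an embedding of tube-domain cones $C_I^W\hookrightarrow C_I^V$; rational isotropic planes $J\subset W$ likewise map to rational isotropic planes in $V$. The hyperplane arrangement $\widehat{\Ha}$ is cut out by negative vectors $v\in V_\QQ$, and $\Ha=\D\cap\widehat{\Ha}$ is cut out precisely by those $v$ lying in $W_\QQ$. Thus the restriction of an admissible decomposition $\Sigma(\widehat{\Ha})$ to the conical locus of $\D$ is again admissible for $\Ha$, possibly after a $\Gamma$-equivariant refinement on the $W$-side; such refinement is harmless because semi-toric compactifications for comparable admissible decompositions are linked by canonical finite birational maps.

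With the cone decompositions aligned, I would construct the extension stratum by stratum. For $\sigma\in\Sigma(\Ha)$ contained in $C_{I,+}^W$, the definitions give $V_\sigma^W\subset V_\sigma^V\cap W$ (with equality), so the projection $\pi_\sigma$ on the $V$-side restricts to the one on the $W$-side. Assembling these over all $\sigma\in\Sigma(\Ha)$ produces a $\Gamma$-equivariant holomorphic map $\D^{\Sigma(\Ha)}\to\widehat{\D}^{\Sigma(\widehat{\Ha})}$, which descends to the desired morphism between semi-toric quotients; algebraicity follows by GAGA since both quotients are projective over the respective Baily-Borel compactifications. Finiteness is then checked on the two types of boundary strata: on the abelian-torsor strata over modular curves (indexed by isotropic planes), the map is a quotient of an abelian-torsor isogeny lying over the already finite map of modular curves from the Baily-Borel step; on the torus-torsor strata (indexed by isotropic lines), the compatibility of cone decompositions translates the map into a finite morphism of toric boundaries.

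The main obstacle I anticipate is the compatibility of the admissible decompositions: it is not automatic that a given $\widehat{\Gamma}$-invariant choice of $\Sigma(\widehat{\Ha})$ slices the subcones $C_I^W$ into cones that are locally rational for the $W$-structure. The cleanest way through is to make the two choices simultaneously, choosing $\Sigma(\widehat{\Ha})$ first and then taking $\Sigma(\Ha)$ to be its trace (refining if necessary so that $\Gamma$-invariance and local rationality hold), which only costs us replacing one side by a finite modification that does not affect the statement. Once this combinatorial compatibility is secured, the normalization claim under condition \ref{condition: arithmetic quotient part} follows immediately: generic injectivity of $\pi$ on $\Gamma\bs\D$ has already been established in proposition \ref{proposition: normalization smooth case}, and a finite generically injective morphism from a normal projective variety onto its image is exactly a normalization of that image.
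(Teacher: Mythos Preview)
Your overall architecture is sound and matches the paper's: construct a $\Gamma$-equivariant map $\D^{\Sigma(\Ha)}\to\widehat{\D}^{\Sigma(\widehat{\Ha})}$ stratum by stratum, descend to quotients, and check finiteness on the two types of boundary strata. However, there are two substantive problems.

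First, a factual error: you write that $\Ha=\D\cap\widehat{\Ha}$ is cut out ``precisely by those $v$ lying in $W_\QQ$.'' This is false. A hyperplane $H\in\widehat{\Ha}$ with normal vector $v\in V_\QQ$ meets $\D$ whenever the orthogonal projection $w$ of $v$ to $W$ satisfies $\varphi(w,w)<0$; there is no reason $v$ itself should lie in $W_\QQ$. The induced hyperplane $H\cap W$ is then orthogonal to $w$, not to $v$. This matters downstream, because the cone decomposition $\Sigma(\Ha)$ is governed by exactly which $H\in\widehat{\Ha}$ meet $\D$ and how.

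Second, and more seriously, you treat $\Sigma(\widehat{\Ha})$ and $\Sigma(\Ha)$ as choices that might need to be aligned by refinement. In Looijenga's construction they are not choices: each is canonically determined by its hyperplane arrangement (the cones are cut out by the walls $H\cap C_I$ for $H\supset I$). The entire content of the proof is showing that these two canonical decompositions are already compatible, i.e.\ that every $\sigma\in\Sigma(\Ha)$ is of the form $\widehat{\sigma}\cap C_{I,W}$ for some $\widehat{\sigma}\in\Sigma(\widehat{\Ha})$, and that consequently $W_\sigma=V_{\widehat{\sigma}}\cap W$ so the projections $\pi_\sigma$ and $\pi_{\widehat{\sigma}}$ are compatible. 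You assert this equality parenthetically but do not prove it. The paper carries this out in two cases. For $\sigma$ supported in $C_{I,W}$ one must verify that an $H\in\widehat{\Ha}$ with $H\supset I$ contributes a wall to $C_{I,W}$ exactly when $H\cap\D\neq\varnothing$. For $\sigma$ the half-line of an isotropic plane $J\subset W$, the argument is more delicate: writing $v=w+w^\perp$ one has $w\in J^\perp$, so by signature $\varphi(w,w)\le 0$; if $\varphi(w,w)<0$ then $H$ meets $\D$ and contributes to the intersection defining $W_\sigma$, while if $\varphi(w,w)=0$ then $w$ is forced into $J$ (since $W$ cannot contain a rank-$3$ isotropic subspace), whence $H\supset J^\perp\cap W$ and $H$ is irrelevant to the intersection. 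This dichotomy is precisely what yields $W_\sigma=V_\sigma\cap W$ in the plane case, and your proposal has no substitute for it. Your refinement workaround does not rescue this: if the canonical decompositions were incompatible, refining one side would change the semi-toric compactification you are meant to be mapping out of or into.

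Once the equality $W_\sigma=V_{\widehat{\sigma}}\cap W$ is established, your endgame (finiteness on abelian-torsor and torus-torsor strata, then normalization via generic injectivity under condition \ref{condition: arithmetic quotient part}) is correct and agrees with the paper.
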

\begin{proof}
We first show the existence of $\pi\colon \overline{\Gamma\bs\D}^{\Sigma(\Ha)}\longrightarrow \overline{\widehat{\Gamma}\bs\widehat{\D}}^{\Sigma(\widehat{\Ha})}$ as a morphism between two projective varieties, then prove finiteness.

The subdomain is induced by $(W, \varphi)$. The isotropic lines and planes in $W$ are naturally viewed as boundary data of both $\D$ and $\widehat{\D}$. The conical locus of $\D$ is naturally embedded into that of $\widehat{\D}$.


Suppose $\sigma\in \Sigma(\Ha)$ does not correspond to a rational isotropic plane of $W$. Then we have a rational isotropic line $I$, such that $\sigma$ is contained in $C_{I,W,+}$ and intersects with $C_{I,W}$. For each $H$ containing $I$, the intersection $H\cap C_{I, W}$ being not empty is equivalent to $H\cap \D$ being not empty. Then there exists $\tau\in\Sigma(\widehat{\Ha})$ such that $\sigma=\tau\cap W$. We denote $\widehat{\sigma}$ to be the minimal element among all such $\tau$. Thus $\sigma=C_{I,W}\cap \widehat{\sigma}$, which implies $W_{\sigma}=V_{\widehat{\sigma}}\cap W$.

Let $\sigma\in \Sigma(\Ha)$ correspond to an isotropic plane $J$ contained in both $W$ and a hyperplane $H$. Suppose $v$ is a normal vector of $H$ and $v=w+w^\perp$ the decompostion in $V=W\oplus W^\perp$. We have $\varphi(v,v)<0$. The hyperplane $H$ intersects with $\D$ if and only if $\varphi(w,w)< 0$. Since the orthogonal complement of $w$ in $W_{\QQ}$ contains the isotropic plane $J$, we have either $\varphi(w,w)<0$ or $\varphi(w,w)=0$. Suppose the latter case happens, then $w\in J$ since otherwise $\langle J,w\rangle$ is an isotropic subspace of rank $3$ contained in $W_{\QQ}$, which is impossible. Thus in this case $H\supset J^{\perp}\cap W$. The above argument holds for any $H\in \Ha$ containing $\sigma$, hence $W_{\sigma}=V_{\sigma}\cap W$. In this case we also denote $\widehat{\sigma}=\sigma$.

For $\sigma=\{0\}\in \Sigma(\Ha)$, just take $\widehat{\sigma}=\{0\}\in \Sigma(\widehat{\Ha})$. Then for every $\sigma\in \Sigma(\Ha)$, we have a natural holomorphic map $\pi_{\sigma}\D\longrightarrow \pi_{\widehat{\sigma}}\widehat{\D}$ which is apparently injective. Taking union among $\sigma$, we have
\begin{equation*}
\coprod_{\sigma\in\Sigma(\Ha)}\pi_{\sigma}\D\longrightarrow\coprod_{\sigma\in\Sigma(\Ha)}\pi_{\widehat{\sigma}}\widehat{\D}
\hookrightarrow\coprod_{\tau\in\Sigma(\widehat{\Ha})}\pi_{\tau}\widehat{\D}
\end{equation*}
with the composition continuous. After taking quotients by the equivariant actions on both sides, we obtain a finite map between the boundary components. Actually, for those rational isotropic planes $J$, we obtain finite morphisms between Abelian torsors; for those rational isotropic lines $I$, we obtain finite morphisms between algebraic torus torsors. If $A$ satisfies condition \ref{condition: arithmetic quotient part}, then $\pi$ is generically injective by proposition \ref{proposition: normalization smooth case}, hence a normalization of its image.
\end{proof}

\begin{rmk}
The injectivity of $\coprod_{\sigma\in\Sigma(\Ha)}\pi_{\sigma}\D
\longrightarrow\coprod_{\tau\in\Sigma(\widehat{\Ha})}\pi_{\tau}\widehat{\D}$ is already known in \cite{looijenga2003typefour} (the paragraph after lemma 7.1).
\end{rmk}

For $\zeta\ne\overline{\zeta}$, we have ball $\B$ associated to $W=V_{\zeta}$. We next describe the semi-toric compactification of $\B$ with respect to $\Ha$. Here we identify elements in $\Ha$ with hypersurfaces in $W$. The cusp points correspond to isotropic lines $I$ in $W_K$. Let
\begin{equation*}
j(I)=(\underset{H\in \Ha, H\supset I}{\bigcap} H)\cap I_W^{\perp}
\end{equation*}
and $\pi_I\colon \PP(W)-\PP(j(I))\to \PP(W/j(I))$. Define
\begin{equation*}
\overline{\XX}^{j}=\Gamma\bs (\B\sqcup \coprod_I \pi_{j(I)}\B).
\end{equation*}
It naturally maps to the Baily-Borel compatification. The boundary component over each cusp point is an abelian torsor modeled over the vector space $I_W^\perp /j(I)$ quotient by a lattice.

\begin{thm}[ball to type $\IV$]
There is a natural finite extension of $\pi\colon\Gamma\bs \B\longrightarrow \widehat{\Gamma}\bs\widehat{\D}$ to semi-toric compactifications
\begin{equation*}
\pi\colon \overline{\Gamma\bs\B}^j\longrightarrow \overline{\widehat{\Gamma}\bs\widehat{\D}}^{\Sigma(\widehat{\Ha})}.
\end{equation*}
If $A$ satisfies condition \ref{condition: arithmetic quotient part}, the map is a normalization of its image.
\end{thm}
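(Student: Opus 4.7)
The plan is to mimic the proof of Theorem~\ref{theorem: semi-toric typefour}, with the role of rational isotropic data inside $W$ played by $K$-isotropic lines in $W_K$. The essential input is the bijection $I \leftrightarrow J(I) = I + \overline{I}$ between $K$-isotropic lines in $W_K$ and rational isotropic planes in $V_\QQ$ that already controlled the ball-to-type-$\IV$ Baily-Borel extension. Using this bijection, I would attach to each such $I$ a canonical half-line $\sigma(I)$ of $\Sigma(\widehat{\Ha})$ associated to $J(I)$, so that
\[
V_{\sigma(I)} = \Bigl(\bigcap_{H\in\widehat{\Ha},\,H\supset J(I)} H\Bigr)\cap J(I)^\perp.
\]
The key algebraic claim is that $j(I) = V_{\sigma(I)} \cap W$. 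The containment $\supset$ is immediate; for $\subset$, each $H \in \widehat{\Ha}$ is defined over $\QQ$, so $H \supset I$ forces $H \supset \overline{I}$ and hence $H \supset J(I)$, matching the two intersections term by term.

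Once this identification is established, the compatibility of $\pi_{j(I)}$ with the restriction of $\pi_{\sigma(I)}$ assembles into a holomorphic, $\Gamma$-equivariant map
\[
\B \sqcup \coprod_I \pi_{j(I)} \B \longrightarrow \widehat{\D} \sqcup \coprod_{\tau \in \Sigma(\widehat{\Ha})} \pi_\tau \widehat{\D},
\]
extending $\B \hookrightarrow \widehat{\D}$. After passing to quotients by $\Gamma\to\widehat{\Gamma}$ and using the algebraic structure on both semi-toric compactifications, this yields the desired morphism $\pi\colon \overline{\Gamma\bs\B}^j \longrightarrow \overline{\widehat{\Gamma}\bs\widehat{\D}}^{\Sigma(\widehat{\Ha})}$. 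Finiteness is verified stratum by stratum: on each boundary component of $\overline{\Gamma\bs\B}^j$ one has an abelian torsor over a Baily-Borel cusp which maps to an abelian torsor fibered over the modular-curve component of the type-$\IV$ Baily-Borel corresponding to $J(I)$. The map between bases is finite by the ball-to-type-$\IV$ Baily-Borel case already proved, and on fibers finiteness reduces to a comparison of commensurable lattices in $I_W^\perp/j(I)$ and $J(I)^\perp/V_{\sigma(I)}$.

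The main obstacle is the identification $j(I) = V_{\sigma(I)} \cap W$ together with the verification that $\sigma(I)$ is a single half-line of $\Sigma(\widehat{\Ha})$ rather than being further subdivided by rational cones. This uses that the Hermitian form on $W_K$ has signature $(1,n)$: inside $W$, the only rational isotropic plane containing $I$ is $J(I)$ itself, which keeps the admissible decomposition on the ball side compatible with $\Sigma(\widehat{\Ha})$ and prevents spurious refinement of the cone structure.

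Finally, for the normalization claim under condition~\ref{condition: arithmetic quotient part}, the source $\overline{\Gamma\bs\B}^j$ is normal by construction, and generic injectivity of $\pi$ is supplied by proposition~\ref{proposition: normalization smooth case}. A finite birational morphism from a normal variety onto its image is the normalization of that image, so $\pi$ is identified with the normalization of its image, as claimed.
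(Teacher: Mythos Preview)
Your overall strategy matches the paper's, but the argument for the key identity $j(I) = V_{\sigma(I)} \cap W$ has a genuine gap in the direction $j(I) \subset V_{\sigma(I)} \cap W$. The definition of $j(I)$ runs over $H' \in \Ha$, i.e.\ over those $H \in \widehat{\Ha}$ with $H \cap \B \neq \varnothing$, whereas $V_{\sigma(I)} = V_{J}$ is cut out by \emph{all} $H \in \widehat{\Ha}$ with $H \supset J$. Your rationality observation ($H \supset I \Rightarrow H \supset \overline{I} \Rightarrow H \supset J$) only identifies the condition ``$I \subset H$'' with ``$J \subset H$''; it does nothing about the discrepancy between ``$H \cap \B \neq \varnothing$'' and no such restriction. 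So the two intersections are \emph{not} matched term by term: there may be $H \in \widehat{\Ha}$ with $J \subset H$ but $H \cap \B = \varnothing$, and for such $H$ you have not explained why any $x \in j(I)$ lies in $H$.

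The paper closes this gap with a signature argument, and this is the actual content of the proof. Writing the normal vector of $H$ as $v = v_W + \overline{v_W} + v'$ under $V_K = W_K \oplus \overline{W_K} \oplus V'$, the assumption $J \subset H$ forces $h(v_W, I) = \varphi(v_W, \overline{I}) = 0$, so $v_W \in I_W^{\perp}$. Since $h$ has signature $(1,n)$ and $I$ is isotropic, the form is negative semidefinite on $I_W^{\perp}$ with radical $I$; hence $h(v_W, v_W) \le 0$, with equality iff $v_W \in I$. In the strict case $H \cap \B \neq \varnothing$, so $H \cap W$ already appears in $j(I)$; in the equality case $v_W \in I$ gives $H \cap W \supset I_W^{\perp}$, so $H$ imposes no extra condition beyond $x \in I_W^{\perp}$. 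This dichotomy is what yields $j(I) \subset V_J \cap W$. Your remark about signature $(1,n)$ is aimed at the cone structure of $\Sigma(\widehat{\Ha})$, but the half-line attached to an isotropic plane $J$ is never subdivided; the place where the signature genuinely enters is this Hermitian analysis on $I_W^{\perp}$.
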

\begin{proof}
If $K$ is not a quadratic extension of $\QQ$, then $\XX$ is compact and the theorem holds. Now assume that $K$ is a quadratic extension of $\QQ$. Namely, $K=\QQ(\sqrt{-D})$ for certain positive integer $D$. Take any isotropic line $I$ in $W_K$. Suppose a nonzero generator of $I$ is $e+\sqrt{-D}f$, where $e,f\in V_{\QQ}$. Then $\varphi(e+\sqrt{-D}f, e-\sqrt{-D}f)=0$. From lemma \ref{lemma: W isotropic} we have $\varphi(e+\sqrt{-D}f,e+\sqrt{-D}f)=0$. This implies that $J=\langle e,f\rangle$ is an isotropic plane in $V_{\QQ}$.

We claim that $j(I)=W\cap V_J$. Take $H\in\widehat{\Ha}$ with orthogonal vector $v\in V_{\QQ}$. Under the orthogonal decomposition $V_{K}=W_K\oplus \overline{W_K}\oplus V^{\prime}$, we can decompose $v$ as $v=v_W+\overline{v_W}+v^{\prime}$. Then $\varphi(\mathrm{Re}(v_W),J)=0$. From lemma \ref{lemma: W isotropic} we have $\varphi(v_W, I)=0$. Therefore, $\varphi(\mathrm{Im}(v_W),I)=0$ and hence $\varphi(\mathrm{Im}(v_W),J)=0$.

Since $(V_{\QQ},\varphi)$ has signature $(2,N)$, the orthogonal complement of $J$ in $V_{\QQ}$ is negative semi-definite. Thus $\varphi(\mathrm{Re}(v_W),\mathrm{Re}(v_W))\le 0$ and $\varphi(\mathrm{Im}(v_W),\mathrm{Im}(v_W))\le 0$. We then have
\begin{equation*}
\varphi(v_W,\overline{v_W})=\varphi(\rRe(v_W),\rRe(v_W))+\varphi(\rIm(v_W),\rIm(v_W))\le 0.
\end{equation*}
Suppose $\varphi(v_W,\overline{v_W})<0$, then $H\cap \B\ne \varnothing$. Suppose $\varphi(v_W,\overline{v_W})=0$, then $v_W$ is an isotropic line in $W_K$. The vectors $\rRe(v_W)$ and $\rIm(v_W)$ in $V_{\QQ}$ are then isotropic. These two vectors are orthogonal to $J$, hence they belong to $J$. We deduce that $H\supset I_W^{\perp}$. By the definition of $j(I)$ and $V_J$, we conclude the claim.

We now have naturally an injective map $\pi_{j(I)}\B\longrightarrow \pi_J\widehat{\D}$. Taking the union among those isotropic lines $I$, we have an injective map
\begin{equation*}
\B\sqcup\coprod_I\pi_{j(I)}\B\hookrightarrow \coprod_{\sigma\in\Sigma(\widehat{\Ha})}\pi_{\sigma}\widehat{\D}.
\end{equation*}
After taking quotients by the equivariant actions on both sides, we obtain a morphism $\pi\colon \overline{\Gamma\bs\B}^j\longrightarrow\overline{\Gamma\bs\widehat{\D}}^{\Sigma(\widehat{\Ha})}$. Actually, the restriction of this $\pi$ to the boundary component corresponding to $I$ is a finite morphism between Abelian torsors. We conclude that there is natural extension $\pi\colon \overline{(\Gamma\bs\B)}^j\longrightarrow \overline{(\widehat{\Gamma}\bs\widehat{\D})}^{\Sigma(\widehat{\Ha})}$ which is a finite morphism between projective varieties. If $A$ satisfies condition \ref{condition: arithmetic quotient part}, this $\pi$ is generically injective, hence a normalization of its image.
\end{proof}

\begin{rmk}
By proposition \ref{proposition: ball or type four prime order}, a non-symplectic prime-order automorphism of a smooth cubic fourfold has order 2 or 3. This is an evidence for us to conjecture that for all balls arising from symmetric cubic fourfolds, the corresponding field $K$ is either $\QQ(\sqrt{-1})$ or $\QQ(\sqrt{-3})$.
\end{rmk}

\subsection{Main theorem}
\label{subsection: main theorem in appendix}
In this section, we first describe the construction of Looijenga compactification $\overline{\XX}^{\Ha}$ of $\XX^{\circ}\coloneqq \XX-\Gamma\bs\Ha$. We need to successively blow up non-empty intersections of components of $\Gamma\bs\Ha$, and then contract the strict transformations of $\Gamma\bs\Ha$ via a natural accociated semi-ample line bundle on the blowup. We then prove existence and finiteness of morphism between Looijenga compactifications on both sides of $\XX\longrightarrow\widehat{\XX}$.

The blow-up and blow-down constructions with respect to hyperplane arrangement in any normal analytic variety with a properly given line bundle are discussed in the first 3 sections in \cite{looijenga2003ball}. Looijenga applied this general theory to $(\overline{\XX}^{\Sigma(\Ha)},\Gamma\bs\Ha,\cL)$, where $\XX$ is either arithmetic quotient of type $\IV$ domain $\D$ or ball $\B$, and $\cL$ is the natural automorphic line bundle. See theorem 5.7 in \cite{looijenga2003ball} and theorem 7.4 in \cite{looijenga2003typefour}.

The blow-up and blow-down constructions before quotient by the arithmetic groups (and the Looijenga compactification can then be obtained by the last modified space quotient by the arithmetic group). We now describe this. Denote $\PO(\Ha)$ to be the set of nonempty intersections of elements in $\Ha$ as hyperplanes in $\D$ (or $\B$). Let $L\in \PO(\Ha)$ also denote its closure in $\D^{\Sigma}$ (or $\B^j$). Denote $c(L)\coloneqq \codim(L)-1$.

We first look at the semi-toric compactification $\D^{\Sigma}$ of $\D$. Denote $(\D^{\Sigma})^{\circ}$ to be the arrangement complement of $\Ha$ in $\D^{\Sigma}$. Choose $L\in \PO(\Ha)$ a minimal member. Blowing up along $L$ replaces $L$ by the projectivization of its normal bundle, which is isomorphic to $L\times\PP^{c(L)}$. The modified space, denoted by $\Bl_L(\D^{\Sigma})$, has natural topology, arrangement (the strict transform of the previous one) and automorphic line bundle. The strict transforms of those hypersurfaces passing through $L$ form a hyperplane arrangement in $\PP^{c(L)}$, and we denote the complement by $(\PP^{c(L)})^{\circ}$. The complement of the new arrangement in $\Bl_L(\D^{\Sigma})$ is the disjoint union $(\D^{\Sigma})^{\circ}\sqcup L\times (\PP^{c(L)})^{\circ}$. After blowing up successively until hypersurfaces disjoint, we obtain the final blowup $\widetilde{\D}$. This is a disjoint union of $(\D^{\Sigma})^{\circ}$ with $L\times (\PP^{c(L)})^{\circ}$ for all such minimal $L$ appearing in each step.

Now we can contract $L\times (\PP^{c(L)})^{\circ}$ along the direction of $L$ for all such $L$, and obtain $\D^*$ with natural quotient topology. Set-theoretically, $L\times (\PP^{c(L)})^{\circ}$ is contracted to $(\PP^{c(L)})^{\circ}$. We have the following discription (see \cite{looijenga2003typefour}):
\begin{equation}
\label{equation: components type four}
\D^*=\coprod_{L\in\PO(\Ha)}\pi_L\D^{\circ}\sqcup\coprod_{\sigma\in\Sigma(\Ha)}\pi_{\sigma}\D^{\circ}.
\end{equation}
Notice that for $\sigma$ being the vertex, $\pi_{\sigma}$ is identity and $\pi_{\sigma}\D^{\circ}=\D^{\circ}$.

The spaces $\D^{\Sigma}, \widetilde{\D}, \D^*$ constructed above all have natural ringed space structure. Namely, we have the structure sheaves consisting of continuous functions with analytic restriction to each stratum. The group $\Gamma$ naturally acts on those ringed spaces respecting the stratification. The topological quotient space $\overline{\XX}^{\Ha}\coloneqq\Gamma\bs\D^*$ has normal analytic structure respecting the stratification, see \cite{looijenga2003typefour} (theorem 7.4). According to the Riemann extension theorem, the quotient ringed space structure and the analytic structure on $\overline{\XX}^{\Ha}$ coincide.

For the case of ball, parallel argument gives $\widetilde{\B}$ and $\B^*$. We have:
\begin{equation*}
\B^*=\B^{\circ}\sqcup\coprod_{L\in\PO(\Ha)}\pi_L\B^{\circ}\sqcup\coprod_I \pi_{j(I)}\B^{\circ}.
\end{equation*}
This also has natural ringed structure, and $\overline{\XX}^{\Ha}\cong\Gamma\bs\B^*$ as analytic spaces.

\begin{thm}[Main Theorem]
\label{theorem: normalization looijenga compactification}
There is a natural finite extension of $\pi\colon\Gamma\bs (\D-\Ha) \longrightarrow \widehat{\Gamma}\bs(\widehat{\D}-\widehat{\Ha})$ to Looijenga compactifications
\begin{equation*}
\pi\colon \overline{\Gamma\bs\D}^{\Ha}\longrightarrow \overline{\widehat{\Gamma}\bs\widehat{\D}}^{\widehat{\Ha}}.
\end{equation*}
If $A$ satisfies condition \ref{condition: arithmetic quotient part}, the map is a normalization of its image. The same result holds for ball quotients.
\end{thm}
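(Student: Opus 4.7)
The plan is to build the morphism upstairs (before passing to arithmetic quotients) by producing a $\Gamma$-equivariant continuous map $\D^*\longrightarrow \widehat{\D}^*$ (respectively $\B^*\longrightarrow\widehat{\D}^*$) that respects the stratifications given by equation \eqref{equation: components type four}, and then descend. Since the target and source of the desired morphism both carry normal analytic structure coming from the quotient-ringed-space construction of Looijenga, once we have such a continuous, stratum-preserving equivariant map which is holomorphic on each stratum, descent together with the Riemann extension theorem (as used in \cite{looijenga2003typefour}) will upgrade it to an algebraic morphism between normal projective varieties.

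The key step is to match the indexing sets. For each $L\in\PO(\Ha)$ define $\widehat{L}\in\PO(\widehat{\Ha})$ to be the intersection (in $V$) of all hyperplanes $H\in\widehat{\Ha}$ whose intersection with $W$ contains $L$; one checks as in the proof of Theorem~\ref{theorem: semi-toric typefour} that $\widehat{L}\cap W=L$, so that $L\subset \widehat{L}$ induces a well-defined injection $\pi_L\D^{\circ}\hookrightarrow \pi_{\widehat{L}}\widehat{\D}^{\circ}$ on arrangement complements in the linear projections. Combined with the assignment $\sigma\mapsto\widehat{\sigma}$ from Theorem~\ref{theorem: semi-toric typefour} (for the type~$\IV$ case) and with $I\mapsto J$ where $J$ is the rational isotropic plane $\langle \mathrm{Re}(I),\mathrm{Im}(I)\rangle$ (for the ball case, as in the proof of the ball-to-type-$\IV$ semi-toric theorem), we obtain an injective set-theoretic map
\begin{equation*}
\D^*=\coprod_{L\in\PO(\Ha)}\pi_L\D^{\circ}\sqcup\coprod_{\sigma\in\Sigma(\Ha)}\pi_{\sigma}\D^{\circ}\longrightarrow \coprod_{\widehat{L}\in\PO(\widehat{\Ha})}\pi_{\widehat{L}}\widehat{\D}^{\circ}\sqcup\coprod_{\tau\in\Sigma(\widehat{\Ha})}\pi_{\tau}\widehat{\D}^{\circ}=\widehat{\D}^*
\end{equation*}
which is $\Gamma$-equivariant, and analogously for $\B^*$.

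Continuity is the main obstacle, because the topology on $\D^*$ is defined via the successive blowup $\widetilde{\D}\to\D^{\Sigma}$ followed by a non-Hausdorff contraction. The plan is to verify continuity by lifting the construction: blowing up the minimal $L\in\PO(\Ha)$ in $\D^{\Sigma}$ corresponds on the ambient side to blowing up $\widehat{L}$ inside $\D^{\Sigma}$ viewed as a subvariety of $\widehat{\D}^{\Sigma}$, and the identity $\widehat{L}\cap W=L$ implies that the strict transform of $\D^{\Sigma}$ in $\Bl_{\widehat{L}}(\widehat{\D}^{\Sigma})$ agrees with $\Bl_L(\D^{\Sigma})$ near the exceptional locus; iterating, we obtain a continuous equivariant map $\widetilde{\D}\to\widetilde{\widehat{\D}}$ respecting the strata $L\times(\PP^{c(L)})^{\circ}\hookrightarrow \widehat{L}\times(\PP^{c(\widehat{L})})^{\circ}$. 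The contractions on the two sides are then compatible because the contraction on each stratum is along the factor $L$ (resp.\ $\widehat{L}$) and the inclusion $L\hookrightarrow\widehat{L}$ is equivariant. This produces the continuous stratified map $\D^*\to\widehat{\D}^*$.

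Having the continuous equivariant map, descent gives a morphism $\pi\colon\overline{\Gamma\bs\D}^{\Ha}\to\overline{\widehat{\Gamma}\bs\widehat{\D}}^{\widehat{\Ha}}$ of normal analytic spaces; since both are projective (by Looijenga's construction), GAGA makes $\pi$ algebraic. Finiteness is checked stratum by stratum: on the open part it is already known from proposition~\ref{proposition: normalization smooth case}, on each $\pi_{\widehat{L}}\widehat{\D}^{\circ}/\widehat{\Gamma}_{\widehat{L}}$-fiber it reduces to finiteness of a morphism between arrangement complements in linear projective quotients (and finitely many $\Gamma$-orbits of $L$ map to a given $\widehat{L}$ by reduction theory, exactly as in section~\ref{subsection: orbifold loci of ball/typefour quotients}), and on the semi-toric boundary strata finiteness was already established in Theorem~\ref{theorem: semi-toric typefour} and its ball analogue. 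Finally, if $A$ satisfies Condition~\ref{condition: arithmetic quotient part}, then proposition~\ref{proposition: normalization smooth case} gives generic injectivity of $\pi$, so $\pi$ is a finite birational morphism from a normal variety onto its image, hence a normalization of its image.
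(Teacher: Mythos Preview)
Your proposal is correct and follows essentially the same route as the paper's proof: lift to a $\Gamma$-equivariant map $\D^*\to\widehat{\D}^*$ by first producing $\widetilde{\D}\to\widetilde{\widehat{\D}}$ via functoriality of blowups (the paper cites Hartshorne~II.7.15 for the strict-transform statement you spell out), observe that the contractions are compatible because the automorphic line bundle pulls back, descend to the arithmetic quotients as ringed spaces, and verify finiteness on strata using the semi-toric result and generic injectivity under condition~\ref{condition: arithmetic quotient part}. The only point the paper makes slightly more explicit is that a hyperplane $H\in\widehat{\Ha}$ not meeting $\D$ also avoids $\D^{\Sigma}$ after closure, which is needed so that the ambient blowup centers restrict correctly; you implicitly rely on this when asserting $\widehat{L}\cap W=L$ extends to the semi-toric boundary.
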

\begin{proof}
From theorem \ref{theorem: semi-toric typefour}, we have natural morphisms from $\D^{\Sigma}$ to $\widehat{\D}^{\Sigma}$. Near each boundary component, there is a contraction map from a neighborhood to the boundary itself. The arrangement in total space is the pullback of smooth arrangement on the boundary. According to the map defined near the boundary components, we know that any $H\in \widehat{\Ha}$ not intersecting with $\D$ is still away from $\D^{\Sigma}$ after taking its closure. From corollary 7.15 in chapter II in \cite{hartshorne1977algebraic}, we have injective map $\widetilde{\D}\longrightarrow\widetilde{\widehat{\D}}$ respecting the ringed space structures. Notice that the automorphic line bundle on $\D^{\Sigma}$ is the pull back of that on $\widehat{\D}^{\Sigma}$, hence we have an injective map on the strata $L\times (\PP^{c(L)})^{\circ}$ to $\widehat{L}\times (\PP^{c(\widehat{L})})^{\circ}$ which is linear on the second component. Here $\widehat{L}$ is a minimal member used in certain step of the successive blow-up construction of $\widehat{\D}$, and $L$ is the induced member on the smaller subspace by intersecting with $\widehat{L}$. After blowing down, we have a natural injective map $\D^*\longrightarrow \widehat{\D}^*$ respecting the ringed space structures.

This morphism descends to a morphism $\pi\colon \Gamma\bs\D^*\longrightarrow\widehat{\Gamma}\bs\widehat{\D}^*$, still in the category of ringed spaces. We then have an analytic morphism $\pi\colon\overline{\XX}^{\Ha}\longrightarrow\overline{\widehat{\XX}}^{\widehat{\Ha}}$. This analytic morphism extends $\pi\colon \XX^{\circ}\longrightarrow\widehat{\XX}^{\circ}$, and sends boundary strata into boundary strata. Combining with theorem \ref{theorem: semi-toric typefour}, the extended morphism $\pi$ here is finite. If $A$ satisfies condition \ref{condition: arithmetic quotient part}, it is generically injective and hence a normalization of its image.


The same argument also holds for ball.
\end{proof}





\end{appendix}

\noindent\textbf{Acknowledgement}: The first author is supported by Harvard University and University of Pennsylvania. He thanks his advisor Prof. S.-T. Yau for his constant support.

The second author is supported by Yau Mathematical Sciences Center at Tsinghua University. He is visiting Stony Brook University while this work is finished. He thanks Tsinghua University for funding his visit to Stony Brook, and thanks Stony Brook Mathematics Department for providing a great academic environment. He thanks his Ph.D. advisor Eduard Looijenga for his support and encouragement.

We thank Eduard Looijenga for stimulating discussion, especially on the appendix. We thank Radu Laza for helpful conversation. The second author thanks Gregory Pearlstein for helpful comments and for the invitation of a talk on this work. We also had useful communications with Ruijie Yang and Zheng Zhang about this work.
\bibliography{reference}
\bibliographystyle{alpha}

\Addresses
\end{document}